\setlist[description]{leftmargin=\parindent,labelindent=\parindent}
\newtheorem{thm}{Theorem}[section]
\newtheorem{cor}[thm]{Corollary}
\newtheorem{prop}[thm]{Proposition}
\newtheorem{lem}[thm]{Lemma}
\theoremstyle{definition}
\newtheorem{defin}[thm]{Definition}
\theoremstyle{remark}
\newtheorem{obs}[thm]{Remark}
\numberwithin{equation}{section}
\newcommand{\qtens}{\mathbin{\ddot{\otimes}}}
\newcommand{\qf}{\ddot{f}}
\newcommand{\qe}{\ddot{e}}
\newcommand{\qphi}{\ddot{\varphi}}
\newcommand{\qepsilon}{\ddot{\varepsilon}}
\newcommand{\cf}{\widetilde{f}}
\newcommand{\ce}{\widetilde{e}}
\newcommand{\cphi}{\widetilde{\varphi}}
\newcommand{\cepsilon}{\widetilde{\varepsilon}}
\newcommand{\wt}{\mathrm{wt}}
\newcommand{\C}{\mathcal{C}}
\newcommand{\Q}{\mathcal{Q}}
\title{A local characterization of quasi-crystal graphs}
\author[A.J. Cain]{Alan J. Cain}
\address[A.J. Cain]{%
Center for Mathematics and Applications (NOVA Math)\\
NOVA School of Science and Technology\\
NOVA University of Lisbon\\
2829--516 Caparica\\
Portugal
}
\email{%
a.cain@fct.unl.pt
}
\thanks{This work is funded by national funds through the FCT - Fundação para a Ciência e a Tecnologia, I.P., under the
  scope of the projects UIDB/00297/2020 and UIDP/00297/2020 (Center for Mathematics and Applications).}
\author[A. Malheiro]{Ant\'onio Malheiro}
\address[A. Malheiro]{%
Center for Mathematics and Applications (NOVA Math) / Department of Mathematics \\
NOVA School of Science and Technology\\
NOVA University of Lisbon\\
2829--516 Caparica\\
Portugal
}
\email{%
ajm@fct.unl.pt
}
\author[F. Rodrigues]{F\'atima Rodrigues}
\address[F. Rodrigues]{%
Center for Mathematics and Applications (NOVA Math) / Department of Mathematics \\
NOVA School of Science and Technology\\
NOVA University of Lisbon\\
2829--516 Caparica\\
Portugal
}
\email{%
mfsr@fct.unl.pt
}
\author[I. Rodrigues]{Inês Rodrigues}
\address[I. Rodrigues]{%
Center for Mathematics and Applications (NOVA Math)\\
NOVA School of Science and Technology\\
NOVA University of Lisbon\\
2829--516 Caparica\\
Portugal
}
\email{%
ima.rodrigues@fct.unl.pt
}
\begin{document}
\begin{abstract}

It is provided a local characterization of quasi-crystal graphs, by presenting a set of local axioms, similar to the ones introduced by Stembridge for crystal graphs of simply-laced root systems. It is also shown that quasi-crystal graphs satisfying these axioms are closed under the tensor product recently introduced by Cain, Guilherme and Malheiro. It is deduced that each connected component of such a graph has a unique highest weight element, whose weight is a composition, and it is isomorphic to a quasi-crystal graph of semistandard quasi-ribbon tableaux.
\end{abstract}

\maketitle

\section{Introduction}

In the 1980's, Drinfeld \cite{Dri85} and Jimbo \cite{Jim85} independently introduced quantum groups,  which are quantized deformations of universal enveloping algebras of semisimple Lie algebras, and which  played a significant role in theoretical physics. 
Kashiwara, building upon their work, developed the theory of crystal bases and crystal graphs as a framework for studying representations of quantum groups \cite{Kas94,Kas90,Kas91}, at $q=0$ limit. Crystal bases are (informally) combinatorial objects associated with representations, and crystal graphs are directed graphs that encode the combinatorial data of crystal bases. Kashiwara has showed that, the crystal graph structure has very interesting properties such as being stable under tensor products \cite{Kas90}.

The theory of crystal bases has proven to be a valuable tool in representation theory and has found applications in various fields such as algebraic geometry, mathematical physics, and combinatorics. 
Indeed, crystal bases are related to Young tableaux through a categorification process that involves replacing the crystal operators of a crystal base by certain combinatorial operations, known as the Kashiwara operators. These operators correspond to edges in the crystal graph allowing for transitions between different tableaux. Through this connection, the combinatorial properties of Young tableaux, such as their shapes, content, and row and column insertion operations, can be related to the combinatorial structures of crystal bases \cite{KN94}. 

The  monoid whose elements are identified with Young tableaux is called the plactic monoid. It has origin in the works of Schensted \cite{Sch61}  and Knuth \cite{Knu70}, and it was later studied in depth by  Lascoux and Schützenberger \cite{LS81}. Kashiwara showed that the plactic monoid arises from the crystal bases associated with the vector representation of the quantized universal enveloping general linear Lie algebra.  It emerges as the quotient of a free monoid on a given alphabet $A_n=\{1,\ldots, n\}$ by a congruence that identifies words that have the same position in isomorphic components of the crystal graph \cite{KN94}.

The plactic monoid also plays a significant role in the theory of symmetric polynomials, particularly in connection with Schur polynomials. These polynomials, which serve as the irreducible polynomial characters of the general linear group $\mathrm{GL}_n(\mathbb{C})$, are indexed by shapes of Young tableaux with entries in the alphabet $A_n$. They form a basis for the ring of symmetric polynomials in $n$ indeterminates. The application of the plactic monoid has yielded the first rigorous proof of the Littlewood--Richardson rule, a combinatorial rule that expresses a product of two Schur polynomials as a linear combination of Schur polynomials \cite{Lit34}.

In addition to the classical plactic monoid, there exists another monoid known as the hypoplactic monoid that emerges in the realm of quasi-symmetric functions and non-commutative symmetric functions. It was first introduced by Krob and Thibon \cite{KT97} and studied in depth by Novelli \cite{Nov00}. The hypoplactic monoid provides an analogue to the classical plactic monoid, but with quasi-ribbon tableaux as elements. The  quasi-ribbon polynomials  serve as a basis for the ring of quasi-symmetric polynomials, analogous to how Schur polynomials form a basis for the ring of symmetric polynomials.

Cain and Malheiro \cite{CM17} introduced a purely combinatorial quasi-crystal structure for the hypoplactic monoid, similar to the crystal structure for the plactic monoid. 
They show that many of the intriguing connections observed between the crystal graph, Kashiwara operators, Young tableaux, and the plactic monoid are mirrored in the interaction of the analogous quasi-crystal graph, quasi-Kashiwara operators, quasi-ribbon tableaux, and the hypoplactic monoid. In particular, the hypoplactic monoid is defined as the quotient of the free monoid on the alphabet $A_n$ by the congruence that identifies words in the same position in isomorphic connected components of the quasi-crystal graph. 
Recently, Maas-Gariépy \cite{MG23} independently introduced an equivalent quasi-crystal structure by considering the decomposition of Schur functions into fundamental quasi-symmetric functions.
 
The first two authors, together with Guilherme \cite{CGM23},  introduced the concept of a hypoplactic congruence that can be defined for any seminormal quasi-crystal, leading to a broader notion of a hypoplactic monoid. They demonstrate that the hypoplactic monoid construction proposed by Cain and Malheiro can be viewed within the context of the hypoplactic monoid associated with the general linear Lie algebra. In the same paper, it is also  defined a quasi-tensor product of quasi-crystals, in a similar way as it was done for crystals. 

In this paper, the authors aim to further advance the theory of quasi-crystals in parallel with the existing theory of crystals.   This endeavor is motivated by the desire to extend the rich interplay between the crystal graph, Kashiwara operators, Young tableaux, and the plactic monoid to the realm of quasi-crystals. 

In \cite{Kas95}, Kashiwara  presented an abstract notion of crystal associated to a root system as an edge-coloured graph satisfying certain specific axioms, levering this notion  to a more general setting that goes beyond crystals of representations -- see \cite{BumpSchi17}. While explicit constructions of crystals exist for certain quantum algebras, such as those described in \cite{KN94} and \cite{Lit95}, the characterization of those arising from representations was obtained by Stembridge in \cite{Stem03}.
More specifically, Stembridge gave a set of local structural properties on  crystal graphs  of simply-laced root systems that permit to identify which  crystal graphs correspond to the crystal of a representation.
The simply-laced cases include all quantum Kac--Moody algebras having a Cartan matrix with off-diagonal entries of $0$ or $-1$. It is worth noting that these simply-laced crystals hold significant significance as they encompass all highest weight crystals of finite or affine type, which are of immense interest in the field. 

Other crystal-like structures also exhibit local characterizations. For instance, Gillespie and Levinson \cite{GL19} provided local axioms for a crystal of shifted tableaux, Gillespie, Hawkes, Poh and Schilling \cite{GHPS20} gave a characterization of crystals for the quantum queer superalgebra, building on local axioms introduced by Assaf and O\u{g}uz \cite{AO20}, and Tsuchioka \cite{Tsu21} introduced a local characterization of $B_2$ regular crystals. 

Paralleling the previous work, the authors present in this paper a set of local axioms, similar to those presented by Stembridge for crystals, that characterize quasi-crystal graphs of simply-laced root systems that arise from the quasi-crystal of type $A_n$. This characterization answers a question (Question 1)  posed by the referee of \cite{CM17}.

This paper is organized as follows. In Section \ref{sec:background} we recall the notion of crystals, focusing on Stembridge crystals, and quasi-crystals.
In Section \ref{sec:local_axs}, we introduce local axioms for quasi-crystal graphs (Definition \ref{def:local_axioms}), and prove that connected quasi-crystals satisfying these axioms are completely characterized by their unique highest weight elements (Theorems \ref{thm:uniq_hw} and \ref{thm:wt_iso}). We also prove that the quasi-crystal graphs satisfying the said axioms are closed under the quasi-tensor product introduced in \cite{CGM23}.
In Section \ref{sec:stem_crystals_quasi}, we introduce an algorithm to obtain quasi-crystal graphs satisfying the local axioms from a connected Stembridge crystal (Theorem \ref{thm:crystal_quasi_ax}).

\section{Background}\label{sec:background}
We begin by recalling the notion of (abstract) crystals and Stembridge crystals, following mainly \cite{BumpSchi17}. Then, we recall the notion of quasi-crystals, first introduced in \cite{CM17} and further developed in \cite{CGM23}.

\subsection{Crystals}

\begin{defin}\label{def:crystal}
Let $\Phi$ be a root system, with weight lattice $\Lambda$, index set $I$ and simple roots $\alpha_i$, for $i \in I$.
A \emph{crystal} of type $\Phi$  is a non-empty set $\mathcal{C}$ together with maps 
$\ce_i, \cf_i: \C \longrightarrow \mathcal{C} \sqcup \{ \bot \}$, $\cepsilon_i, \cphi_i : \C \longrightarrow \mathbb{Z} \sqcup \{-\infty\}$, and $\wt: \C \longrightarrow \Lambda$ 
for $i \in I := \{1, \ldots, n-1\}$, satisfying the following:
\begin{description}
\item[C1.] For any $x,y \in \mathcal{C}$, $\ce_i (x) = y$ if and only $x = \cf_i (y)$, and in that case,
\begin{align*}
\wt(y) &= \wt(x) + \alpha_i,\\ 
\cepsilon_i(y) &= \cepsilon_i (x) - 1,\\ 
\cphi_i (y) &= \cphi_i (x)+1,
\end{align*}

\item[C2.] $\cphi_i (x) = \cepsilon_i (x) + \langle \wt(x), \alpha_i^{\vee} \rangle$. 

\item[C3.] If $\cepsilon_i (x) = -\infty$, then $\ce_i (x) = \cf_i (x) = \bot$. 
\end{description}
\end{defin}

The maps $\ce_i$ and $\cf_i$ are called the  \emph{Kashiwara operators} (respectively, the \emph{raising} and \emph{lowering} operators), $\cepsilon_i$ and $\cphi_i$ are the \emph{length functions} and $\wt$ is the \emph{weight function}.
A crystal is said to be \emph{seminormal} if
\begin{align*}
\cepsilon_i (x) &= \max \{k: \ce_i^{\,k} (x) \neq \bot\},\\
\cphi_i (x) &= \max \{k: \cf_i^{\,k} (x) \neq \bot\},
\end{align*}
for all $i \in I$, $x \in \C$.
In particular, in a seminormal crystal, we have $\cepsilon_i(x), \cphi_i (x) \geq 0$, for all $x \in \C$, $i \in I$. We say that $x \in \C$ is a \emph{highest weight element} if $\ce_i(x) = \bot$, for all $i \in I$. If $\C$ is seminormal, this is equivalent to have $\cepsilon_i (x)= 0$, for all $i \in I$.

We associate a crystal to its \emph{crystal graph}, a directed graph, whose edges are labelled in $I$ and whose vertices are weighted in $\Lambda$, and such that there exists an $i$-labelled edge $y \overset{i}{\longrightarrow} x$ if and only if $\cf_i (y) = x$.

\begin{defin}
Let $\Phi$ be a simply-laced root system. A crystal of type $\Phi$ is a \emph{weak Stembridge crystal} if the following axioms are satisfied, for all $i, j \in I$ such that $i \neq j$:
\begin{enumerate}
\item[\textbf{S1.}] If $\ce_i (x)=y$, then $\cepsilon_j (y)$ is equal to either $\cepsilon_j(x)$ or $\cepsilon_j (x)+1$, and the latter happens only if $\alpha_i$ and $\alpha_j$ are orthogonal roots.

\item[\textbf{S2.}] If $\ce_i (x) = y$ and $\cepsilon_j (y) = \cepsilon_j (x) > 0$, then 
$$\ce_i \ce_j (x) = \ce_j \ce_i (x) \neq \bot,$$
and $\cphi_i (\ce_j (x)) = \cphi_i (x)$.

\item[\textbf{S2$'$.}] If $\cf_i (x) = y$ and $\cphi_j (y) = \cphi_j (x) > 0$, then 
$$\cf_i \cf_j (x) = \cf_j \cf_i (x) \neq \bot,$$
and $\cepsilon_i (\cf_j (x)) = \cepsilon_i (x)$.

\item[\textbf{S3.}] If $\ce_i (x) = y$ and $\ce_j (x)=z$, and
$\cepsilon_j (y) = \cepsilon_j (x)+1$ and $\cepsilon_i (z)= \cepsilon_i (x)+1$, then $$\ce_i \ce_j^{\,2} \ce_i (x) = \ce_j \ce_i^{\,2} \ce_j (x) \neq \bot,$$
and $\cphi_i (\ce_j (x)) = \cphi_i (\ce_j^{\,2} \ce_i (x))$ and $\cphi_j (\ce_i (x)) = \cphi_j (\ce_i^{\,2} \ce_j (x))$.

\item[\textbf{S3$'$.}] If $\cf_i (x) = y$ and $\cf_j (x)=z$, and
$\cphi_j (y) = \cphi_j (x)+1$ and $\cphi_i (z)= \cphi_i (x)+1$, then $$\cf_i \cf_j^{\,2} \cf_i (x) = \cf_j \cf_i^{\,2} \cf_j (x) \neq \bot,$$
and $\cepsilon_i (\cf_j (x)) = \cepsilon_i (\cf_j^{\,2} \cf_i (x))$ and $\cepsilon_j (\cf_i (x)) = \cepsilon_j (\cf_i^{\,2} \cf_j (x))$.
\end{enumerate}
A \emph{Stembridge crystal} is a weak Stembridge crystal that is also seminormal. 
\end{defin}

\begin{prop}[{\cite[Proposition 4.5]{BumpSchi17}}]\label{prop:stem_prop}
Let $\C$ be a crystal graph satisfying axiom \textbf{S1} and let $\ce_i (x) = y$. Then, exactly one of the following possibilities is true:
\begin{enumerate}
\item $\cepsilon_j (y) = \cepsilon_j (x)$, $\cphi_j (y) = \cphi_j (x)-1$, for $\langle \alpha_i, \alpha_j \rangle = -1$.
\item $\cepsilon_j (y) = \cepsilon_j (x)+1$, $\cphi_j (y) = \cphi_j (x)$, for $\langle \alpha_i, \alpha_j \rangle = -1$.
\item $\cepsilon_j (y) = \cepsilon_j (x)$, $\cphi_j (y) = \cphi_j (x)$, for $\langle \alpha_i, \alpha_j \rangle = 0$.
\end{enumerate}
\end{prop}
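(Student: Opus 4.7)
The plan is to reduce to a direct computation from axioms \textbf{C1} and \textbf{C2} and then enumerate cases using the dichotomy enforced by \textbf{S1}. The first step is to apply \textbf{C2} to both $x$ and $y = \ce_i(x)$ and subtract the two identities, which gives
\[
\cphi_j(y) - \cphi_j(x) = \bigl(\cepsilon_j(y) - \cepsilon_j(x)\bigr) + \langle \wt(y) - \wt(x),\, \alpha_j^\vee \rangle.
\]
By \textbf{C1}, $\wt(y) = \wt(x) + \alpha_i$, so the inner-product term collapses to $\langle \alpha_i, \alpha_j^\vee\rangle$. Since $\Phi$ is simply-laced and $i \neq j$, this Cartan integer is either $0$ (when $\alpha_i$ and $\alpha_j$ are orthogonal) or $-1$ (when they are connected in the Dynkin diagram); in both cases it agrees with the symmetric pairing $\langle \alpha_i, \alpha_j\rangle$ used in the statement.

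The second step is to invoke \textbf{S1}, which forces $\cepsilon_j(y) - \cepsilon_j(x) \in \{0,1\}$ and permits the value $1$ only in the non-orthogonal case. Combining these constraints with the displayed identity yields three admissible configurations: (i) orthogonal roots with $\cepsilon_j(y) = \cepsilon_j(x)$, whence $\cphi_j(y) = \cphi_j(x)$; (ii) non-orthogonal roots with $\cepsilon_j(y) = \cepsilon_j(x)$, whence $\cphi_j(y) = \cphi_j(x) - 1$; and (iii) non-orthogonal roots with $\cepsilon_j(y) = \cepsilon_j(x) + 1$, whence $\cphi_j(y) = \cphi_j(x)$. These are items (3), (1) and (2) of the proposition, respectively.

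Finally, I would observe that exclusivity is automatic: the three configurations are separated either by the value of $\langle \alpha_i, \alpha_j^\vee\rangle$ or, within the non-orthogonal branch, by the value of $\cepsilon_j(y) - \cepsilon_j(x)$. The only combination the pure counting would otherwise allow, namely orthogonal roots with $\cepsilon_j(y) = \cepsilon_j(x)+1$, is exactly what the second clause of \textbf{S1} rules out. There is no real technical obstacle here; the argument is essentially bookkeeping on top of the linear identity extracted from \textbf{C1} and \textbf{C2}, and it makes the role of the simply-laced hypothesis transparent.
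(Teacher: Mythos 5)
Your proposal is correct, and it is essentially the textbook argument: note that the paper itself offers no proof of this proposition, it simply quotes it as \cite[Proposition 4.5]{BumpSchi17}, and your derivation (subtract the two instances of \textbf{C2}, use $\wt(y)=\wt(x)+\alpha_i$ from \textbf{C1} to reduce the weight term to $\langle\alpha_i,\alpha_j^{\vee}\rangle\in\{0,-1\}$, then enumerate the possibilities allowed by \textbf{S1}) is the same computation carried out there. One caution is in order, though. In your second step you invoke \textbf{S1} in its standard form, namely that $\cepsilon_j(y)=\cepsilon_j(x)+1$ is permitted only when $\langle\alpha_i,\alpha_j\rangle=-1$. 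The paper's statement of \textbf{S1} literally says the opposite (``the latter happens only if $\alpha_i$ and $\alpha_j$ are orthogonal roots''), and under that literal reading your enumeration would not close: case (2) would be impossible, while the combination $\cepsilon_j(y)=\cepsilon_j(x)+1$, $\cphi_j(y)=\cphi_j(x)+1$ with orthogonal roots would remain unexcluded, so the proposition could not be derived. Since the proposition itself, its source in Bump--Schilling, and the paper's later usage (where $j=i\pm1$ is exactly the situation in which $\cepsilon_j$ may jump by one) all require the standard formulation, you have in effect silently corrected what appears to be a misstatement in the paper's \textbf{S1}; it would be worth flagging that explicitly rather than passing over it. A final minor point: the subtraction step tacitly assumes $\cepsilon_j(x)\neq-\infty$ so that the arithmetic is legitimate; this is harmless in the seminormal setting in which the proposition is applied, but deserves a word if the proposition is meant for arbitrary crystals satisfying \textbf{S1}.
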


\subsection{Quasi-crystals}

Consider $\mathbb{Z}\sqcup\{-\infty, +\infty\}$ the set of integers with two additional symbols, a minimal element $-\infty$ and a maximal element $+\infty$. In this set consider also the usual addition between integers, and set $m+(-\infty)=(-\infty)+m= - \infty$ and $m+(+\infty)=(+\infty)+m= + \infty$, for all $m \in \mathbb{Z}$.

\begin{defin}[{\cite[Definition 3.1]{CGM23}}]\label{def:quasicrystal}
A \emph{quasi-crystal} of type $\Phi$  is a non-empty set $\Q$ together with maps 
$\qe_i, \qf_i: \Q \longrightarrow \Q \sqcup \{ \bot \}$, $\qepsilon_i, \qphi_i : \Q \longrightarrow \mathbb{Z} \sqcup \{-\infty, + \infty\}$, and $\wt: \Q \longrightarrow \Lambda$ 
for $i \in I := \{1, \ldots, n-1\}$, satisfying the following:
\begin{description}
\item[Q1.] For any $x,y \in \Q$, $\qe_i (x) = y$ if and only $x = \qf_i (y)$, and in that case,
\begin{align*}
\wt(y) &= \wt(x) + \alpha_i,\\ 
\qepsilon_i(y) &= \qepsilon_i (x) -1,\\ 
\qphi_i (y) &= \qphi_i (x)+1.
\end{align*}

\item[Q2.] $\qphi_i (x) = \qepsilon_i (x) + \langle \wt(x), \alpha_i^{\vee} \rangle$.
\item[Q3.] If $\qepsilon_i (x) = - \infty$, then $\qe_i (x) = \qf_i (x) = \bot$.
\item[Q4.] If $\qepsilon_i (x) = + \infty$, then $\qe_i (x) = \qf_i (x) = \bot$.
\end{description}
\end{defin}
We use the same terminology as of crystals, except the maps $\qe_i$ and $\qf_i$ are called the \emph{quasi-Kashiwara operators}.
A quasi-crystal is said to be \emph{seminormal} if
\begin{align*}
\qepsilon_i (x) &= \max \{k: \qe_i^k (x) \neq \bot\},\\
\qphi_i (x) &= \max \{k: \qf_i^k (x) \neq \bot\},
\end{align*}
for all $i \in I$, $x \in \C$, whenever $\qepsilon_i (x) \neq + \infty$.  In particular, in a seminormal quasi-crystal $\qepsilon_i(x),\qphi_i(x)\in\mathbb{Z}_{\geq 0} \sqcup \{+ \infty\}$.

It follows from Definition \ref{def:quasicrystal} that a crystal is a quasi-crystal such that $\qepsilon_i (x), \qphi_i (x) \neq + \infty$, for all $x \in \C, i \in I$ \cite[Remark 3.2]{CGM23}. We say that $x \in \Q$ is a \emph{highest weight element} if $\qe_i (x) = \bot$ for all $i \in I$. Note that, in a seminormal quasi-crystal, this is not equivalent to have $\qepsilon_i (x) = 0$ for all $i \in I$, as one might have $\qe_j (x) = \bot$ and $\qepsilon_j (x) = + \infty$.

Similarly to crystals, we associate a quasi-crystal with its \emph{quasi-crystal graph}, defined in the same way as the crystal graph, but such that $x \in \Q$ has an $i$-labelled loop if and only if $\qepsilon_i (x) = \qphi_i (x)=+ \infty$.

\section{Local axioms for quasi-crystals}\label{sec:local_axs}

In what follows, we will consider quasi-crystals of type $A_{n-1}$, where the weight lattice is $\Lambda = \mathbb{Z}^n$, the index set $I = \{1 < \cdots < n-1\}$, and the simple roots are $\alpha_i = (0, \ldots, 0, 1, -1, 0, \ldots, 0)$, and satisfy $\alpha_i^{\vee} = \alpha_i$, for $i\in I$. We will also use the symbols $\Q$, $\qe_i$, $\qf_i$, $\qepsilon_i$, $\qphi_i$ and $\wt$ as in Definition~\ref{def:quasicrystal}. 

\begin{obs}
We focus our study on type $A_{n-1}$ quasi-crystals, because, unlike the crystal case, the other simply-laced types do not exhibit the same nice properties. For instance, connected quasi-crystals of type $D_n$ might have more that one highest weight element \cite{CGM23}.
\end{obs}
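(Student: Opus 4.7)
The remark is an existence statement---it asserts that some connected quasi-crystal of type $D_n$ has more than one highest weight element. A proof amounts to exhibiting such an example; the cited reference \cite{CGM23} indicates the example is constructed there. My plan to reconstruct it independently would be to apply the general quasi-crystal construction of \cite{CGM23} to a small type-$D_n$ representation, and then to search its quasi-crystal graph for a connected component containing two elements $x,y$ with $\qe_i(x)=\qe_i(y)=\bot$ for every $i\in I$.

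The conceptual ingredient that makes the phenomenon possible is axiom \textbf{Q4}: a vertex $x$ can have $\qe_i(x)=\bot$ either by being a genuine top of an $i$-string (with $\qepsilon_i(x)=0$), or by acquiring an infinite length value $\qepsilon_i(x)=+\infty$, which mutes the operator without placing $x$ at any natural top. In contrast to crystals, where $\ce_i$ vanishes only at the top of a string, these two causes of triviality can coexist in a quasi-crystal, and the ``false'' tops form additional candidates for highest weight elements. The concrete verification I would carry out is then: fix a small $n$, say $n=4$, write down the underlying type-$D_n$ crystal of a standard representation, compute the length functions $\qepsilon_i,\qphi_i$ and the surviving edges in the associated quasi-crystal graph, and look for a connected component containing both a genuine top and a false top.

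The main obstacle is the combinatorial bookkeeping: tracking which vertices acquire infinite length values, and verifying that enough $\qf_i$-edges survive in the quasi-crystal graph to connect two highest weight candidates inside the same component. The structural reason the pathology arises in type $D_n$ but not in type $A_{n-1}$ is the branching of the Dynkin diagram at the fork node; this is precisely what prevents the local axioms to be introduced in Section~\ref{sec:local_axs}---and, through them, uniqueness of the highest weight element---from extending beyond the linear type-$A_{n-1}$ case.
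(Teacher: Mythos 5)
The statement you are addressing is a remark, and the paper does not prove it at all: it simply points to \cite{CGM23}, where an explicit example of a connected type-$D_n$ quasi-crystal with more than one highest weight element is given. Your submission, however, is not yet a proof either---it is a programme. You correctly identify the mechanism that makes the phenomenon possible (axiom \textbf{Q4} lets $\qe_i(x)=\bot$ occur both at a genuine top of an $i$-string, where $\qepsilon_i(x)=0$, and at a vertex with $\qepsilon_i(x)=+\infty$), but an existence claim is only established by actually exhibiting and verifying an example. You say you \emph{would} fix $n=4$, compute the length functions, and search a connected component for two highest weight elements, yet none of this computation is carried out, no candidate vertices are named, and no connected component is displayed. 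As it stands, the key step---producing a connected quasi-crystal of type $D_n$ together with two distinct vertices $x,y$ in the same component satisfying $\qe_i(x)=\qe_i(y)=\bot$ for all $i\in I$---is entirely missing.

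Two further cautions. First, the mere availability of $+\infty$ values does not by itself separate type $D_n$ from type $A_{n-1}$: type-$A$ quasi-crystals also have vertices with loops ($\qepsilon_i=\qphi_i=+\infty$), and yet Theorem \ref{thm:uniq_hw} shows that, under the local axioms, connected components still have a unique highest weight element. So your conceptual explanation needs to engage with why connectivity forces uniqueness in type $A$ but fails to do so in type $D$; attributing this ``precisely'' to the fork node of the Dynkin diagram is a plausible heuristic, but you neither prove it nor need it---for the remark, a single verified example suffices, and that is what you should supply (either by reproducing the example from \cite{CGM23} or by completing the computation you outline). Second, when you do carry out the computation, you must check that the structure you build is genuinely a quasi-crystal in the sense of Definition \ref{def:quasicrystal} (in particular that \textbf{Q1}--\textbf{Q4} hold and, if you want to match the setting of \cite{CGM23}, that it is seminormal), not merely a decorated crystal graph; otherwise the two ``highest weight'' vertices you find may be artifacts of an ill-formed structure rather than a counterexample to uniqueness.
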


We remark that, for seminormal crystals (or, more generally, for upper seminormal ones, where it is only required that $\qepsilon_i (x) = \max \{k: \qe_i^k (x) \neq \bot\}$), the condition $\cepsilon_i (x) > 0$ is equivalent to $\ce_i (x) \neq \bot$. For seminormal quasi-crystals, one may have $\qepsilon_i (x) = + \infty > 0$ and, by \textbf{Q4}, $\qe_i (x) = \bot$. Therefore, in the following statements, differing from \cite{BumpSchi17}, we will require specifically that $\qe_i (x) \neq \bot$, instead of $\qepsilon_i (x) > 0$. For this reason, for certain results we do not require that the quasi-crystal graphs are seminormal, but rather that $\qepsilon_i (x), \qphi_i (x) \in \mathbb{Z}_{\geq 0} \sqcup \{+ \infty\}$ (which is satisfied by seminormal quasi-crystals). 

\begin{defin}[Local quasi-crystal axioms]\label{def:local_axioms}
Let $\Q$ be a quasi-crystal graph. Let $i,j\in I$ and $x,y\in \Q$.
\begin{description}
\item[LQ1.] For $i+1\in I$, $\qepsilon_i(x) = 0 \Leftrightarrow \qphi_{i+1}(x)=0$. 

\item[LQ2.] If $\qe_i(x)=y$, then:
\begin{enumerate}
\item $\qepsilon_j(x) = \qepsilon_j(y)$, for $|i-j|>1$.
\item For $i+1\in I$, $\qepsilon_{i+1}(x) \neq \qepsilon_{i+1}(y) \Leftrightarrow \big( \qepsilon_{i+1}(x) = + \infty \wedge  \qepsilon_i(y) =0 \big) \Rightarrow \qepsilon_{i+1}(y)\neq 0$.
\item For $i-1\in I$, $\qphi_{i-1}(x) \neq \qphi_{i-1}(y) \Leftrightarrow \big( \qphi_{i-1}(y) = + \infty \wedge  \qphi_i(x) = 0 \big) \Rightarrow \qphi_{i-1}(x)\neq 0$.
\end{enumerate}

\item[LQ3.] For $i\neq j$,  if both $\qe_i(x)$ and $\qe_j(x)$ are defined, then $\qe_i \qe_j(x)=\qe_j \qe_i(x)\neq \bot$.

\item[LQ3$'$.] For $i\neq j$, if both $\qf_i(x)$ and $\qf_j(x)$ are defined, then $\qf_i \qf_j(x)=\qf_j \qf_i(x)\neq \bot$.
\end{description}
\end{defin}

The three cases of axiom \textbf{LQ2} are depicted in Figures \ref{fig:LQ2_1}, \ref{fig:LQ2_2} and \ref{fig:LQ2_3}.

\begin{figure}[h]
\begin{center}
\includegraphics[scale=1]{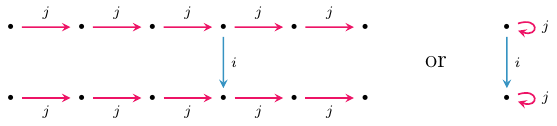}
\caption{Illustration of axiom \textbf{LQ2} (1). The blue labels denote $\qf_{i}$ and the red ones $\qf_j$ or $j$-labelled loops.}
\label{fig:LQ2_1}
\end{center}
\end{figure}

\begin{figure}[h]
\begin{center}
\includegraphics[scale=1]{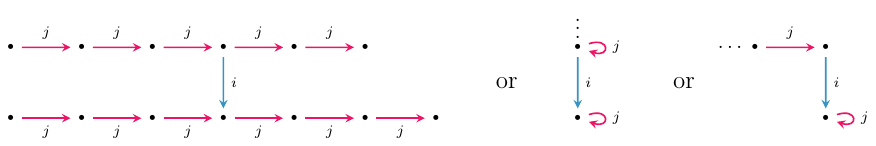}
\caption{Illustration of axiom \textbf{LQ2} (2). The blue labels denote $\qf_{i}$ and the red ones $\qf_{i+1}$ or $(i+1)$-labelled loops.}
\label{fig:LQ2_2}
\end{center}
\end{figure}

\begin{figure}[h]
\begin{center}
\includegraphics[scale=1]{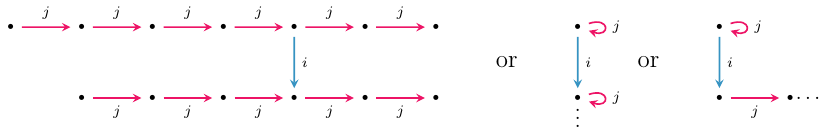}
\caption{Illustration of axiom \textbf{LQ2} (3). The blue labels denote $\qf_{i}$ and the red ones $\qf_{i-1}$ or $(i-1)$-labelled loops.}
\label{fig:LQ2_3}
\end{center}
\end{figure}

We remark that, in the previous definition, the condition in \textbf{LQ2} (1) can be replaced with $\qphi_j$. We prove this equivalence in the next result.

\begin{lem}\label{lem:lem_ij}
Let $\Q$ be a quasi-crystal graph satisfying \emph{\textbf{LQ1}} and \emph{\textbf{LQ2}}. Let $i,j\in I$ and $x,y\in \Q$, and suppose that $\qe_i(x) = y$.
\begin{enumerate}
\item If $|i-j|>1$, then $\big( \qepsilon_j(y)=\qepsilon_j(x) \Leftrightarrow \qphi_j(y)=\qphi_j(x)\big)$.
\item If $\qepsilon_{i+1} (x) \neq + \infty$, then $\big(\qepsilon_{i+1} (y) = \qepsilon_{i+1} (x) \Leftrightarrow \qphi_{i+1} (y) = \qphi_{i+1} (x)-1\big)$.
\item If $\qepsilon_{i-1} (x) \neq + \infty$, then $\big(\qepsilon_{i-1} (y) = \qepsilon_{i-1} (x)+1 \Leftrightarrow \qphi_{i-1} (y) = \qphi_{i-1} (x)\big)$.
\end{enumerate}
\end{lem}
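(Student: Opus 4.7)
The plan is to derive all three biconditionals from a single backbone identity arising from the weight/length relation in axiom \textbf{Q2}, combined with the precise statements in axiom \textbf{LQ2} that control when $\qepsilon_j$ or $\qphi_j$ can change between $x$ and $y=\qe_i(x)$. Since $\qe_i(x)=y$, axiom \textbf{Q1} gives $\wt(y)=\wt(x)+\alpha_i$; applying \textbf{Q2} at index $j$ to both $x$ and $y$ and subtracting yields, whenever all four values are finite,
\[
\qphi_j(y)-\qphi_j(x) \;=\; \bigl(\qepsilon_j(y)-\qepsilon_j(x)\bigr) + \langle \alpha_i,\alpha_j^{\vee}\rangle,
\]
where in type $A_{n-1}$ the inner product equals $0$ if $|i-j|>1$ and $-1$ if $|i-j|=1$.

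Part (1) follows quickly. Axiom \textbf{LQ2} (1) gives $\qepsilon_j(y)=\qepsilon_j(x)$ unconditionally when $|i-j|>1$, so the left side of the biconditional is automatic; on the finite branch, the backbone identity with pairing $0$ gives $\qphi_j(y)=\qphi_j(x)$, and on the $+\infty$ branch, \textbf{Q2} propagates $\qepsilon_j=+\infty$ to $\qphi_j=+\infty$ at both $x$ and $y$, so both sides of the biconditional hold. Part (2) is similar: the hypothesis $\qepsilon_{i+1}(x)\neq+\infty$ falsifies the right-hand disjunct of the biconditional in \textbf{LQ2} (2), forcing $\qepsilon_{i+1}(y)=\qepsilon_{i+1}(x)$ to lie in $\mathbb{Z}_{\geq 0}$; the backbone identity with pairing $-1$ then yields $\qphi_{i+1}(y)=\qphi_{i+1}(x)-1$, so again both sides of the biconditional are simultaneously true.

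Part (3) is the delicate one, since the hypothesis $\qepsilon_{i-1}(x)\neq+\infty$ bounds $\qphi_{i-1}(x)$ via \textbf{Q2} but leaves open whether $\qphi_{i-1}(y)$ is finite. I would case-split: if $\qphi_{i-1}(y)=+\infty$, then \textbf{Q2} applied to $y$ forces $\qepsilon_{i-1}(y)=+\infty$ as well, and both sides of the claimed biconditional fail (since $\qepsilon_{i-1}(x)+1$ and $\qphi_{i-1}(x)$ are finite). Otherwise, the right-hand disjunct in \textbf{LQ2} (3) fails, forcing $\qphi_{i-1}(y)=\qphi_{i-1}(x)$; the backbone identity with pairing $-1$ then gives $\qepsilon_{i-1}(y)=\qepsilon_{i-1}(x)+1$, so both sides of the biconditional are true. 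The main obstacle throughout is bookkeeping with $+\infty$ values, since the backbone identity is valid only when all four quantities are integers; this is precisely what forces the case split in part (3) and explains why each part must be handled under its specific finiteness hypothesis.
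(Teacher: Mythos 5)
Your proposal is correct, but it follows a genuinely different route from the paper. The paper's proof never touches \textbf{LQ1} or \textbf{LQ2}: it simply applies \textbf{Q2} at index $j$ to both $x$ and $y$, uses $\wt(y)=\wt(x)+\alpha_i$ from \textbf{Q1}, and reads off $\langle\alpha_i,\alpha_j\rangle\in\{0,-1\}$, which proves each equivalence in both directions by a two-line computation (your ``backbone identity'', used symmetrically). You instead use \textbf{LQ2} to pin down the actual values: in parts (1) and (2) the equivalence in \textbf{LQ2} forces $\qepsilon_j(y)=\qepsilon_j(x)$ outright, and in part (3) your case split on $\qphi_{i-1}(y)=+\infty$ makes both sides of the biconditional simultaneously true or simultaneously false. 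This is logically valid (the lemma does assume \textbf{LQ2}), and it in fact proves a stronger statement --- essentially anticipating clauses of Proposition \ref{prop:local_ax} --- whereas the paper's argument buys a cleaner layering: the lemma is a pure consequence of the quasi-crystal axioms, and the local axioms only enter afterwards. Two small caveats on your write-up: the assertion that the common value lies in $\mathbb{Z}_{\geq 0}$ is not justified under the lemma's hypotheses (no seminormality is assumed), though only finiteness is needed; and the possible value $-\infty$ is left untreated, whereas the conventions $m+(\pm\infty)=\pm\infty$ fixed before Definition \ref{def:quasicrystal} make both that branch and your $+\infty$ branches immediate --- indeed they let the paper's single computation cover all cases without any case split.
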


\begin{proof}
Suppose that $|i-j|>1$. Then, since $\Q$ is of type $A_{n-1}$, we have $\langle \alpha_i, \alpha_j \rangle = 0$. Suppose that $\qepsilon_j (y) = \qepsilon_j (x)$. Then,
\begin{align*}
\qphi_j(y) &= \qepsilon_j(y) + \langle \wt(y), \alpha_j \rangle & (\textrm{by \textbf{Q2} and since $\alpha_i^{\vee}=\alpha_i$})\\
&= \qepsilon_j (x) + \langle \wt(x) + \alpha_i, \alpha_j \rangle & (\textrm{by \textbf{Q1}})\\
&= \qepsilon_j(x) + \langle \wt(x), \alpha_j \rangle + \langle \alpha_i, \alpha_j\rangle \\
&= \qepsilon_j(x) + \langle \wt(x), \alpha_j \rangle \\
&=  \qphi_j(x) & (\textrm{by \textbf{Q2} and since  $\alpha_i^{\vee}=\alpha_i$}).
\end{align*}
Similarly, $\qphi_j (y) = \qphi_j (x)$, implies that $\qepsilon_j (y) = \qepsilon_j (x)$.

Now suppose that $\qepsilon_{i+1} (x) \neq + \infty$, and suppose that $\qepsilon_{i+1} (y) = \qepsilon_{i+1} (x)$. Then, 
\begin{align*}
\qphi_{i+1} (y) &= \qepsilon_{i+1}(y) + \langle \wt(y), \alpha_{i+1} \rangle & (\textrm{by \textbf{Q2} and since  $\alpha_i^{\vee}=\alpha_i$})\\
&= \qepsilon_{i+1} (x) + \langle \wt(x) + \alpha_i, \alpha_{i+1} \rangle & (\textrm{by \textbf{Q1}})\\
&= \qepsilon_{i+1} (x) + \langle \wt(x), \alpha_{i+1} \rangle + \langle \alpha_i, \alpha_{i+1} \rangle\\
&= \qepsilon_{i+1} (x) + \langle \wt(x), \alpha_{i+1} \rangle - 1\\
&= \qphi_{i+1} (x)-1 & (\textrm{by \textbf{Q2} and since  $\alpha_i^{\vee}=\alpha_i$})
\end{align*}
and similarly, one shows that $\qphi_{i+1} (y) = \qphi_{i+1} (x)-1$ implies that $\qepsilon_{i+1} (y) = \qepsilon_{i+1} (x)$. The proof for the third case is analogous.
\end{proof}

\begin{prop}\label{prop:local_ax}
Let $\Q$ be a quasi-crystal graph satisfying \emph{\textbf{LQ1}} and \emph{\textbf{LQ2}} and such that $\qepsilon_i (x)$, $\qphi_i (x) \in \mathbb{Z}_{\geq 0} \sqcup \{+ \infty\}$, for all $x \in Q, i \in I$. Let $i,j\in I$ and $x,y\in \Q$, and suppose that $\qe_i(x) = y$. Then:

\begin{enumerate}
\item If $|i-j|>1$, $\qepsilon_j(y)=\qepsilon_j(x)$ and $\qphi_j(y)=\qphi_j(x)$.

\item If $i-j=-1$, then:
\begin{enumerate}
\item If $\qepsilon_{i+1}(x) \neq + \infty$, then $\qepsilon_{i+1}(y)=\qepsilon_{i+1}(x)$ and $\qphi_{i+1}(y)=\qphi_{i+1}(x)-1$.
\item If $\qepsilon_{i+1}(x)=+\infty$ and $\qepsilon_i(y)>0$, then $\qepsilon_{i+1}(y)=\qepsilon_{i+1}(x)=+\infty$ and $\qphi_{i+1}(y)=\qphi_{i+1}(x)=+\infty$.
\item If $\qepsilon_{i+1}(x)=+\infty$ and $\qepsilon_i(y)=0$, then $\qepsilon_{i+1}(y)= {- \langle \wt(y), \alpha_{i+1} \rangle} > 0$ and $\qphi_{i+1}(y)=0$.
\end{enumerate}

\item If $i-j = 1$, then:
\begin{enumerate}
\item If $\qphi_{i-1}(y) \neq + \infty$, then $\qepsilon_{i-1}(x)=\qepsilon_{i-1}(y)-1$ and $\qphi_{i-1}(x)=\qphi_{i-1} (y)$.
\item If $\qphi_{i-1}(y)=+\infty$ and $\qphi_i(x)>0$, then $\qepsilon_{i-1}(x)=\qepsilon_{i-1}(y)=+\infty$ and $\qphi_{i-1}(x)=\qphi_{i-1}(y)=+\infty$.
\item If $\qphi_{i-1}(y)=+\infty$ and $\qphi_i(x)=0$, then $\qepsilon_{i-1}(x)=0 $ and $\qphi_{i-1}(x)= {\langle \wt(x), \alpha_{i-1} \rangle} >0$.
\end{enumerate}
\end{enumerate}
\end{prop}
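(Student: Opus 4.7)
The plan is to proceed by case analysis matching the three cases in the statement, with the weight-function conclusions ($\qphi_j$ values) derived from the $\qepsilon_j$ conclusions via \textbf{Q2}, and the $\qepsilon_j$ conclusions themselves read off from \textbf{LQ2} (using both directions of its biconditional) together with \textbf{LQ1} in the loop cases. Lemma \ref{lem:lem_ij} will handle most of the translation between $\qepsilon$ and $\qphi$ whenever values are finite, so I would invoke it immediately after each application of \textbf{LQ2}.

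For case (1), axiom \textbf{LQ2}(1) gives $\qepsilon_j(y)=\qepsilon_j(x)$ directly, and Lemma \ref{lem:lem_ij}(1) yields $\qphi_j(y)=\qphi_j(x)$. For case (2), I split along whether $\qepsilon_{i+1}(x) = +\infty$. In subcase (a) the right-hand side of the biconditional in \textbf{LQ2}(2) fails (because its first conjunct does), so $\qepsilon_{i+1}(y)=\qepsilon_{i+1}(x)$, and then Lemma \ref{lem:lem_ij}(2) gives $\qphi_{i+1}(y)=\qphi_{i+1}(x)-1$. In subcase (b) the conjunction on the right of the biconditional still fails, this time because $\qepsilon_i(y)>0$, so $\qepsilon_{i+1}(y)=\qepsilon_{i+1}(x)=+\infty$, and axiom \textbf{Q2}, under the extended arithmetic for $+\infty$, forces $\qphi_{i+1}(x)=\qphi_{i+1}(y)=+\infty$.

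The genuinely subtle subcase is (2c). Here both conjuncts on the right-hand side of \textbf{LQ2}(2) hold, so the biconditional permits (and, combined with $\qepsilon_{i+1}(x)=+\infty$, actually forces something to change). Applying \textbf{LQ1} to $y$ with index $i$, from $\qepsilon_i(y)=0$ I deduce $\qphi_{i+1}(y)=0$. Axiom \textbf{Q2} then gives $\qepsilon_{i+1}(y)=-\langle \wt(y),\alpha_{i+1}\rangle$, and this value must be finite, since $\qepsilon_{i+1}(y)=+\infty$ would, via \textbf{Q2}, force $\qphi_{i+1}(y)=+\infty$, contradicting $\qphi_{i+1}(y)=0$. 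The implication tail of \textbf{LQ2}(2) then yields $\qepsilon_{i+1}(y)\neq 0$, and since the range is $\mathbb{Z}_{\geq 0}\sqcup\{+\infty\}$ we conclude $\qepsilon_{i+1}(y)>0$. Case (3) is entirely dual, exchanging the roles of $\qe_i,\qepsilon_i$ with $\qf_i,\qphi_i$: one uses \textbf{LQ2}(3) and Lemma \ref{lem:lem_ij}(3), and in subcase (3c) applies \textbf{LQ1} at index $i-1$ to the hypothesis $\qphi_i(x)=0$ to obtain $\qepsilon_{i-1}(x)=0$, then \textbf{Q2} for the value of $\qphi_{i-1}(x)$.

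The main bookkeeping hurdle is parsing the compound statement in \textbf{LQ2} correctly (a biconditional followed by a conditional implication) and applying the right half at the right moment, especially in the infinite-to-finite transitions of subcases (2c) and (3c); once these are separated, all remaining steps are direct substitutions into \textbf{Q1}, \textbf{Q2}, and \textbf{LQ1}.
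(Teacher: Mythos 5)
Your proposal is correct and follows essentially the same route as the paper: case (1) via \textbf{LQ2}(1) together with Lemma \ref{lem:lem_ij}, case (2) split according to whether $\qepsilon_{i+1}(x)=+\infty$ and then on $\qepsilon_i(y)>0$ versus $\qepsilon_i(y)=0$, using \textbf{LQ1} and \textbf{Q2} to pin down the values in the subtle subcase, with case (3) handled dually. Your explicit use of the range hypothesis $\mathbb{Z}_{\geq 0}\sqcup\{+\infty\}$ to upgrade $\qepsilon_{i+1}(y)\neq 0$ to $\qepsilon_{i+1}(y)>0$ is exactly the step the paper leaves implicit.
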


We remark that, in the previous result, exactly one of the cases holds.

\begin{proof}[Proof of Proposition \ref{prop:local_ax}]
Since $\qe_i(x) = y$, we have $\wt(y) = \wt(x) + \alpha_i$, by \textbf{Q1}. 

The first condition is a direct consequence of Lemma \ref{lem:lem_ij}.

Now suppose that $j = i+1$ (the proof for $j=i-1$ is analogous). Then, $\langle \alpha_i, \alpha_{i+1} \rangle = -1$. If $\qepsilon_{i+1} (x) \neq + \infty$, the result follows  from Lemma \ref{lem:lem_ij}. If $\qepsilon_{i+1} (x) = + \infty$, and since $\qepsilon_{i}(y) \geq 0$, we have two cases to consider:

\begin{enumerate}
\item If $\qepsilon_i(y)>0$, axiom \textbf{LQ2} implies that $\qepsilon_{i+1} (y) = \qepsilon_{i+1} (x) = + \infty$. Consequently, by \textbf{Q2}, we have $\qphi_{i+1} (y) = +\infty$.

\item If $\qepsilon_i(y)=0$, then \textbf{LQ2} implies that $\qepsilon_{i+1} (x) \neq \qepsilon_{i+1}(y)$ and $\qepsilon_{i+1} (y) > 0$. By axiom \textbf{LQ1}, $\qepsilon_i(y) = 0$ implies that $\qphi_{i+1} (y) = 0$. In particular, by \textbf{Q2}, we have that $\qepsilon_{i+1}(y)$ and 
 $\qphi_{i+1} (y)$ are finite, and thus, since $\alpha_i^{\vee}=\alpha_i$,
\[
\qepsilon_{i+1} (y) = \qphi_{i+1} (y) - \langle \wt(y), \alpha_{i+1} \rangle = - \langle \wt(y), \alpha_{i+1} \rangle.
\qedhere \]
\end{enumerate}
\end{proof}

The next result is illustrated in Figure \ref{fig:local_infs}.

\begin{cor}\label{cor:infs}
Let $\Q$ be a seminormal quasi-crystal graph satisfying \emph{\textbf{LQ1}} and \emph{\textbf{LQ2}}, and suppose that $\qe_i (x) = y$, for some $x,y\in \Q$ and $i\in I$.
\begin{enumerate}
\item Let $i+1\in I$. If $\qepsilon_{i+1}(y) = + \infty$, then $\qepsilon_{i+1} (x) = + \infty$ and, there exists $k>0$ such that 
$\qepsilon_{i+1} (\qe_i^k (y)) \notin \{0, + \infty\}$, and for $0\leq l<k$, we have $\qepsilon_i(\qe_i^l(y))\notin \{0, + \infty\}$.

\item  Let $i-1\in I$. If $\qphi_{i-1} (x) = + \infty$, then $\qphi_{i-1} (y) = + \infty$ and there exists $k>0$ such that  $\qphi_{i-1}  (\qf_i^k (x)) \notin \{0,+\infty\}$, and  for $0\leq l<k$, we have $\qphi_{i-1}(\qf_i^l(y))\notin \{0, + \infty\}$.
\end{enumerate}
\end{cor}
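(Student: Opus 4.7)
The plan is to iterate the ``$j = i+1$'' branch of Proposition \ref{prop:local_ax} (for part (1)) or the dual ``$j = i-1$'' branch (for part (2)) along a chain of $\qe_i$- or $\qf_i$-arrows, until the infinite value of $\qepsilon_{i+1}$ or $\qphi_{i-1}$ is forced to drop to a finite positive integer.

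For part (1), since $\qe_i(x) = y$ with $\qepsilon_{i+1}(y) = +\infty$, I would first eliminate two of the three cases of Proposition \ref{prop:local_ax}(2): cases (a) and (c) both produce a finite $\qepsilon_{i+1}(y)$, so only case (b) is consistent with the hypothesis. This gives $\qepsilon_{i+1}(x) = +\infty$ and $\qepsilon_i(y) > 0$. Moreover, \textbf{Q4} applied to the edge $\qe_i(x) = y$ forces $\qepsilon_i(x) \neq +\infty$, and \textbf{Q1} then gives $\qepsilon_i(y) = \qepsilon_i(x) - 1$, so $n := \qepsilon_i(y)$ is a positive finite integer.

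Setting $y_l := \qe_i^l(y)$, I would verify by induction on $l$ that for $0 \leq l \leq n$ the element $y_l$ is defined with $\qepsilon_i(y_l) = n - l$ (using seminormality together with \textbf{Q1}), and that $\qepsilon_{i+1}(y_l) = +\infty$ for every $l < n$. The latter is the heart of the argument: for each $l < n-1$, the arrow $\qe_i(y_l) = y_{l+1}$ satisfies the hypotheses of Proposition \ref{prop:local_ax}(2) case (b), namely $\qepsilon_{i+1}(y_l) = +\infty$ by induction and $\qepsilon_i(y_{l+1}) = n - l - 1 > 0$, so $\qepsilon_{i+1}(y_{l+1}) = +\infty$. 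At the final step $l = n - 1$, one has $\qepsilon_i(y_n) = 0$, and Proposition \ref{prop:local_ax}(2) case (c) yields $\qepsilon_{i+1}(y_n) = -\langle \wt(y_n), \alpha_{i+1}\rangle$, which is a finite positive integer. Thus $k := n$ satisfies both required conclusions.

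Part (2) follows by the evident dualization: iterate $\qf_i$ starting from $x$, using Proposition \ref{prop:local_ax}(3) case (b) along each edge $\qe_i(\qf_i^{l+1}(x)) = \qf_i^l(x)$ to propagate $\qphi_{i-1} = +\infty$ forward, and case (c) to terminate, with $k := \qphi_i(x)$. The main subtlety in either direction is ensuring that the iteration never hits a loop prematurely, i.e.\ that $\qepsilon_i(y_l) \neq +\infty$ (resp.\ $\qphi_i(\qf_i^l(x)) \neq +\infty$) throughout the chain; this is forced by \textbf{Q1} together with \textbf{Q4} applied to the initial edge, which is what makes $n$ a genuine finite integer.
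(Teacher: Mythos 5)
Your proof is correct and takes essentially the same route as the paper: both arguments pull the infinity back from $y$ to $x$, show via \textbf{Q1}/\textbf{Q4} and seminormality that the $i$-string above $y$ has finite positive length $k=\qepsilon_i(y)$, propagate $\qepsilon_{i+1}=+\infty$ step by step up that string, and terminate at the top, where $\qepsilon_i=0$ forces $\qepsilon_{i+1}(\qe_i^{\,k}(y))\notin\{0,+\infty\}$. The only cosmetic difference is that you package the propagation and termination through the case analysis of Proposition \ref{prop:local_ax}(2)(b)--(c) (itself derived from \textbf{LQ1}--\textbf{LQ2}), which also yields $\qepsilon_{i+1}(x)=+\infty$ and $\qepsilon_i(y)>0$ in one stroke, whereas the paper invokes \textbf{LQ1} and \textbf{LQ2}(2) directly.
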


\begin{proof}
We prove the first statement; the second one is proved similarly. Suppose that $\qepsilon_{i+1} (y) = + \infty$. If $\qepsilon_{i+1} (x) \neq + \infty$, then axiom \textbf{LQ2} (2) implies that $\qepsilon_{i+1} (y)= \qepsilon_{i+1} (x) \neq + \infty$, which is a contradiction. Therefore, $\qepsilon_{i+1} (x)= +\infty$.

Now suppose that $\qepsilon_i (y)=0$. By \textbf{LQ1}, we have $\qphi_{i+1} (y)=0$, which implies that $\qepsilon_{i+1} (y)$ is finite, contradicting the hypothesis. Thus, $\qepsilon_i (y) \neq 0$. By \textbf{Q1}, $\qe_i (x) = y$ implies that $\qf_i (y)=x$, hence $\qf_i (y) \neq \bot$, and by \textbf{Q4}, $\qepsilon_i (y) \neq + \infty$. Therefore, $\qepsilon_i (x) \not\in \{0, + \infty\}$,
and thus, there exists $k > 0$ such that $\qepsilon_i (y)=k$. Since $\Q$ is seminormal, we have $\qe_i^l (y) \neq \bot$, for all $l \leq k$ and $\qe_i^s (y)= \bot$, for $s > k$. Therefore, $\qepsilon_i (\qe_i^l (y)) \not\in \{0, +\infty\}$, for $l < k$. 
Now, iteratively, applying \textbf{LQ2} (2), we obtain  $\qepsilon_{i+1} (\qe_i^l (y))=\qepsilon_{i+1}  (y) = + \infty$, for $1\leq l < k$.
Finally, since $\qepsilon_i(\qe_i^k(y))=0$ and  $\qepsilon_{i+1} (\qe_i^{k-1} (y))=+\infty$ we conclude by \textbf{LQ2} (2), that  $\qepsilon_{i+1} (\qe_i^{k}(y)) \notin \{0,+ \infty\}$.
\end{proof}

\begin{figure}[h]
\begin{center}
\includegraphics[scale=1]{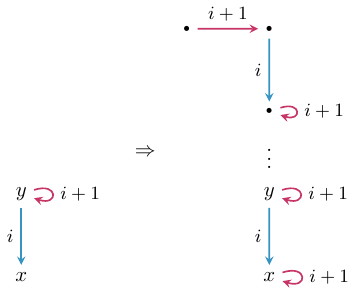}
\hspace{1.5cm}
\includegraphics[scale=1]{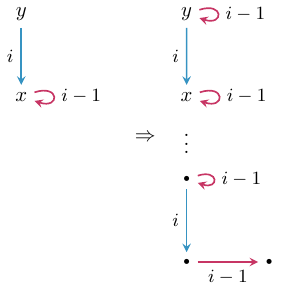}
\end{center}
\caption{Illustration of Corollary \ref{cor:infs}.}
\label{fig:local_infs}
\end{figure}

We have recalled that a crystal $\C$ is a quasi-crystal such that $\qepsilon_i (x) \neq + \infty$ for all $x \in \C, i \in I$ \cite{CGM23}. Therefore, we have the following result.

\begin{prop}
Let $\Q$ be a crystal graph satisfying the local quasi-crystal axioms of Definition \ref{def:local_axioms}. Then $\Q$ is a weak Stembridge crystal.
\end{prop}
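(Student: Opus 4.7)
The plan is to verify each Stembridge axiom \textbf{S1}, \textbf{S2}, \textbf{S2$'$}, \textbf{S3}, \textbf{S3$'$} for $\Q$, exploiting the crucial fact that, since $\Q$ is a crystal, every value of $\qepsilon_i$ and $\qphi_i$ is a finite integer. Consequently, the ``$+\infty$'' subcases (2)(b), (2)(c), (3)(b), (3)(c) of Proposition~\ref{prop:local_ax} cannot arise, and that proposition collapses to the following trichotomy: whenever $\qe_i(x)=y$ and $j\neq i$, exactly one of the following holds, matching Proposition~\ref{prop:stem_prop}:
\begin{itemize}
\item $|i-j|>1$: $\qepsilon_j(y)=\qepsilon_j(x)$ and $\qphi_j(y)=\qphi_j(x)$;
\item $j=i+1$: $\qepsilon_j(y)=\qepsilon_j(x)$ and $\qphi_j(y)=\qphi_j(x)-1$;
\item $j=i-1$: $\qepsilon_j(y)=\qepsilon_j(x)+1$ and $\qphi_j(y)=\qphi_j(x)$.
\end{itemize}
Axiom \textbf{S1} is then read off directly.

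For \textbf{S2}, suppose $\qe_i(x)=y$ with $\qepsilon_j(y)=\qepsilon_j(x)>0$. The trichotomy rules out $j=i-1$ (which would force $\qepsilon_j(y)=\qepsilon_j(x)+1$), so either $|i-j|>1$ or $j=i+1$. In both cases $\qe_j(x)\neq\bot$, and axiom \textbf{LQ3} then supplies $\qe_i\qe_j(x)=\qe_j\qe_i(x)\neq\bot$. For the remaining equality $\qphi_i(\qe_j(x))=\qphi_i(x)$, I apply Proposition~\ref{prop:local_ax} to the edge $x\to\qe_j(x)$: when $|i-j|>1$ this is case~(1); when $j=i+1$, viewing $j$ as the operator index and $i=j-1$ as the observation index, the required equality is exactly case~(3)(a). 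Axiom \textbf{S2$'$} is proved by the dual argument, using \textbf{LQ3$'$} in place of \textbf{LQ3}.

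The cleanest observation is that \textbf{S3} and \textbf{S3$'$} hold vacuously. Writing $y=\qe_i(x)$ and $z=\qe_j(x)$, the hypothesis of \textbf{S3} simultaneously demands $\qepsilon_j(y)=\qepsilon_j(x)+1$ and $\qepsilon_i(z)=\qepsilon_i(x)+1$. By the trichotomy, the first condition forces $j=i-1$, while the second forces $i=j-1$, that is, $j=i+1$; these cannot both hold. The same incompatibility handles \textbf{S3$'$}. The main subtlety of the argument is recognising this vacuity; once the trichotomy is in hand, the remaining axioms follow without calculation from \textbf{LQ3} and \textbf{LQ3$'$}.
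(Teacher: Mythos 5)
Your proposal is correct and follows essentially the same route as the paper: finiteness of all $\qepsilon_j,\qphi_j$ reduces the local axioms to the Stembridge trichotomy (giving \textbf{S1} and the length-function equalities), \textbf{LQ3} and \textbf{LQ3$'$} supply the commutations in \textbf{S2} and \textbf{S2$'$}, and \textbf{S3}, \textbf{S3$'$} are vacuous because the two ``$+1$'' conditions would force $j=i-1$ and $j=i+1$ simultaneously --- exactly the paper's argument. The only cosmetic differences are that you package the case analysis through Proposition~\ref{prop:local_ax} (whose finite cases need only \textbf{LQ2} and Lemma~\ref{lem:lem_ij}) where the paper cites those directly, and that, like the paper, you read the hypothesis of \textbf{S2} as ``$\qe_j(x)\neq\bot$'', in line with the paper's stated convention of replacing $\qepsilon_j(x)>0$ by definedness of the operator.
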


\begin{proof}
Suppose that $\qe_i (x) = y$, for $x, y \in \Q$, $i \in I$ and let $j \in I$ be such that $j \neq i$. Since $\Q$ is a crystal, we have $\qepsilon_j (x) \neq + \infty$. Thus, if $|i-j|>1$ or $j=i+1$, by axiom \textbf{LQ2} we have $\qepsilon_j (y) = \qepsilon_j (x)$. Otherwise, if $j=i-1$ (and, in this case, $\alpha_j$ and $\alpha_i$ are orthogonal roots), we have $\qepsilon_j (y) = \qepsilon_j (x)+1$. Therefore, $\Q$ satisfies axiom \textbf{S1}.

Now suppose that $\qe_i (x)=y$ and $\qe_j (x)=z$ are both defined. Then, axiom \textbf{LQ3} ensures that $\qe_i \qe_j (x) = \qe_j \qe_i (x) \neq \bot$. We now consider three cases:
\begin{enumerate}
\item If $|i-j|>1$, we have $\qepsilon_i (z) = \qepsilon_i (x)>0$ and $\qepsilon_j(y)=\qepsilon_j (x)>0$, and thus, by Lemma \ref{lem:lem_ij}, we have $\qphi_i (z) = \qphi_i (x)$ and $\qphi_j (y) = \qphi_j (x)$. 
\item If $j=i+1$, and since $\qe_i (x)=y$ and $\qepsilon_j (x) \neq + \infty$ (because $\Q$ is a crystal and thus $\qepsilon_i (x) \neq + \infty$, for any $x \in Q$, $i \in I$), it follows from axiom \textbf{LQ2} (2) that $\qepsilon_j (y) = \qepsilon_j (x) > 0$. By the same reasoning, since $\qe_j (x)=z$ and $\qphi_i (z) \neq + \infty$, axiom \textbf{LQ2} (3) ensures that $\qepsilon_i (z) = \qepsilon_i (x)+1$, and thus, by Lemma \ref{lem:lem_ij}, we have $\qphi_i (z)=\qphi_i (x)$.
\item If $j=i-1$, since $\qe_i (x)=y$ and $\qphi_j (y) \neq + \infty$, axiom \textbf{LQ2} (3) implies that $\qphi_j (y)=\qphi_j (x)$, and thus, by Lemma \ref{lem:lem_ij}, we have $\qepsilon_j (y)=\qepsilon_j (x)+1$. Similarly, since $\qe_j (x)=z$ and $\qepsilon_i (x) \neq + \infty$, axiom \textbf{LQ2} (2) implies that $\qepsilon_i (z) = \qepsilon_i (x) > 0$.
\end{enumerate}
In all cases, $\Q$ satisfies axiom \textbf{S2}. Moreover, the conditions $\qepsilon_i (z) = \qepsilon_i (x)+1$ and $\qepsilon_j (y) = \qepsilon_j (x)+1$ do not occur simultaneously, hence the hypotheses of axiom \textbf{S3} never occur. With similar reasoning, we obtain the same conclusions for the dual axioms.
\qedhere
\end{proof}

Recall that a highest weight element in a quasi-crystal is an element $x$ such that $\qe_i(x) = \bot$, for all $i \in I$.
We define a partial order in $\Q$ where $x \prec y$ if there exists a sequence $i_1, \dots, i_N$ in $I$ such that $\qe_{i_1} \dots \qe_{i_N} (x)=y$.
Similarly to the crystal case, we say that a quasi-crystal graph $\Q$ is \emph{bounded above} if, for every $x \in \Q$, there exists and highest weight element $x_h \in \Q$ such that $x \preceq x_h$.

\begin{thm}\label{thm:uniq_hw}
Let $\Q$ be a non-empty and bounded above connected quasi-crystal graph satisfying the local quasi-crystal axioms. Then, $\Q$ has a unique highest weight element. 
\end{thm}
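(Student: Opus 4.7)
The plan is to fix one highest weight element and show every element of $\Q$ lies below it in the $\preceq$-order, which forces uniqueness. By non-emptiness, pick any $x \in \Q$, and invoke boundedness to obtain a highest weight element $x_h$ with $x \preceq x_h$. Set $S = \{y \in \Q : y \preceq x_h\}$. Once $S = \Q$ is established, any other highest weight element $x_h'$ lies in $S$, so a sequence of $\qe_i$'s takes $x_h'$ to $x_h$; that sequence must be empty (else its first step would contradict $\qe_i(x_h') = \bot$ for all $i \in I$), giving $x_h' = x_h$.

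To obtain $S = \Q$, I would combine connectedness with stability of $S$ under both $\qf_i$ and $\qe_i$. Stability under $\qf_i$ is immediate: if $y \in S$ and $\qf_i(y) = y'$, then $\qe_i(y') = y$ by \textbf{Q1}, and prepending this step to any upward path from $y$ to $x_h$ produces one from $y'$ to $x_h$. Stability under $\qe_i$ is the real content. For this, I would induct on the length $N$ of a chosen upward path from $y$ to $x_h$. The base $N = 0$ is vacuous, since $y = x_h$ admits no defined $\qe_i$. In the inductive step, let $\qe_{i_1}$ be the first step of the path and $z = \qe_{i_1}(y)$, so $z$ has an upward path of length $N - 1$ to $x_h$. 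If $i = i_1$, then $\qe_i(y) = z \in S$. Otherwise both $\qe_i(y)$ and $\qe_{i_1}(y)$ are defined, and axiom \textbf{LQ3} yields $\qe_{i_1}(\qe_i(y)) = \qe_i(\qe_{i_1}(y)) = \qe_i(z)$. Applying the inductive hypothesis to $z$ with index $i$ gives $\qe_i(z) \in S$; then $\qe_i(y) \preceq \qe_i(z) \preceq x_h$ by transitivity, so $\qe_i(y) \in S$.

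The main obstacle is precisely this closure under $\qe_i$, which relies on the diamond/commutation axiom \textbf{LQ3} to convert a two-step upward divergence into a confluent square and then iterate. None of the other local axioms are needed for this statement — the argument only manipulates $\preceq$-paths and therefore uses only the upward commutation rule, and the subtle distinction between $\qepsilon_i = 0$ and $\qe_i = \bot$ in the seminormal quasi-crystal setting does not enter because we work with the operators directly rather than through the length functions.
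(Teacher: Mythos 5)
Your proposal is correct, and it takes a genuinely different route from the paper. The paper argues by contradiction: it considers the set $S$ of vertices lying below two \emph{distinct} highest weight elements, picks a maximal element $w_0$ of $S$, and uses \textbf{LQ3} to produce a common upper cover $z = \qe_i\qe_j(w_0)$ that again lies below two distinct highest weight elements, contradicting maximality. You instead fix one highest weight element $x_h$ guaranteed by boundedness, and prove directly that its down-set $S = \{y : y \preceq x_h\}$ is closed under every $\qf_i$ (trivially, via \textbf{Q1}) and under every $\qe_i$ (by induction on the length of an upward path to $x_h$, where \textbf{LQ3} supplies the commuting square $\qe_{i_1}\qe_i(y) = \qe_i\qe_{i_1}(y) \neq \bot$); connectedness then forces $S = \Q$, so any other highest weight element sits below $x_h$ and the connecting sequence of raising operators must be empty. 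Both arguments hinge on \textbf{LQ3} alone among the local axioms, and neither needs seminormality, exactly as you observe. What your version buys: it avoids the paper's appeal to a maximal element of $S$ (whose existence implicitly presupposes that ascending chains terminate, something the paper does not justify), and it makes explicit the role of connectedness in passing from the local statement to global uniqueness, which the paper leaves tacit after concluding $S = \emptyset$. What the paper's version buys is brevity. One cosmetic point: with the paper's convention $\qe_{i_1}\cdots\qe_{i_N}(x) = y$, the ``first step'' applied to $y$ is the innermost operator $\qe_{i_N}$, so your indexing of the first edge of the path should be adjusted accordingly, but this does not affect the argument.
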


\begin{proof}
Let $S$ be the subset of vertices $w \in \Q$ for which there exist distinct highest weight elements $x$ and $y$, such that $ w \preceq x$ and $w \preceq y$. Suppose that $S \neq \emptyset$ and let $w_0$ be a maximal element of $S$. Then, by maximality, there are vertices $x, y \in \Q$, such that $w_0 \prec x$ and $w_0 \prec y$, where $x$ and $y$ are each comparable to only one highest weight element, say $x_h$ and $y_h$, respectively. Without loss of generality, assume that $x = \qe_i (w_0)$ and $y = \qe_j (w_0)$, for $i, j \in I$. Then, by axiom \textbf{LQ3}, there exists $z \in C$ such that  $\qe_i \qe_j (w_0) = \qe_j \qe_i (w_0) = z$. Then, we have $x \prec z$, and by hypothesis, $x$ is comparable to a unique highest weight element $x_h$. Thus, we have $z \preceq x_h$. By the same reasoning, we have $z \preceq y_h$, and thus $z \in S$. 
But since $w_0 \prec x$, we have $w_0 \prec x \prec z$, which contradicts the maximality of $w_0$. Therefore, $S = \emptyset$.
\end{proof}

\begin{thm}\label{thm:wt_iso}
Let $\Q$ and $\Q'$ be connected components of seminormal quasi-crystal graphs satisfying the local quasi-crystal axioms, with highest weight elements $u$ and $u'$, respectively. Then, if $\wt(u)=\wt(u')$, there exists an isomorphism between $\Q$ and $\Q'$. 
\end{thm}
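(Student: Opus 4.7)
The plan is to build an isomorphism $\psi\colon \Q \to \Q'$ by setting $\psi(u) = u'$ and propagating along lowering operators. Since $\Q$ is connected with unique highest weight element $u$ (by Theorem~\ref{thm:uniq_hw}), each $x \in \Q$ may be written as $x = \qf_{i_r}\cdots\qf_{i_1}(u)$ for some sequence $i_1,\dots,i_r\in I$, and one defines $\psi(x) := \qf_{i_r}\cdots\qf_{i_1}(u')$. Three verifications are required: (i) that $u$ and $u'$ carry matching local data; (ii) that the prescribed word of lowering operators is defined in $\Q'$ at every step; and (iii) that the image $\psi(x)$ is independent of the chosen path.

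For (i) and (ii), observe that since $\qe_i(u) = \qe_i(u') = \bot$ for all $i$, seminormality combined with \textbf{Q4} forces $\qepsilon_i(u), \qepsilon_i(u') \in \{0, +\infty\}$. Axioms \textbf{Q2} and \textbf{LQ1} (via the cascade $\qepsilon_i = 0 \Leftrightarrow \qphi_{i+1} = 0$) then tie these values to the common weight $\wt(u)=\wt(u')$, giving $\qepsilon_i(u) = \qepsilon_i(u')$ and $\qphi_i(u) = \qphi_i(u')$ for every $i$. One then propagates inductively along the lowering path: assuming $\wt$, $\qepsilon_i$ and $\qphi_i$ agree at $x_k$ and $\psi(x_k)$, Proposition~\ref{prop:local_ax} supplies a uniform update rule for these quantities under $\qf_{i_{k+1}}$ in terms of the current values, so the same equalities hold at $x_{k+1}$ and $\psi(x_{k+1})$. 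Applicability transfers because, by seminormality and \textbf{Q4}, the condition $\qf_j(y)\neq\bot$ is equivalent to $\qphi_j(y)\in\mathbb{Z}_{>0}$, which is preserved under $\psi$.

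The main obstacle is (iii). The multiset of labels along any path from $u$ to $x$ is fixed by $\wt(u)-\wt(x)=\sum_k\alpha_{i_k}$, so any two such paths share the same length $r$; we induct on $r$. Given two length-$r$ paths to $x$, ending with $\qf_i$ at $x_1 = \qe_i(x)$ and $\qf_j$ at $x_2 = \qe_j(x)$ respectively: if $i=j$, then $x_1=x_2$ and the inductive hypothesis applied at the shared predecessor finishes the case. If $i\neq j$, apply axiom \textbf{LQ3} in $\Q$ at $x$ to produce a common element $y = \qe_i\qe_j(x) = \qe_j\qe_i(x)$, at which both $\qf_i$ and $\qf_j$ are defined. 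The inductive hypothesis renders $\psi(y)$ unambiguous (there is a path $u\to y$ of length $r-2$), and the matching of local data from (ii) ensures that $\qf_i(\psi(y))$ and $\qf_j(\psi(y))$ are defined in $\Q'$; axiom \textbf{LQ3$'$} applied in $\Q'$ at $\psi(y)$ then yields $\qf_i\qf_j(\psi(y)) = \qf_j\qf_i(\psi(y))$, reconciling the two candidate values for $\psi(x)$. The resulting $\psi$ preserves $\wt$, $\qepsilon_i$, $\qphi_i$ (hence the loop configuration $\qepsilon_i = \qphi_i = +\infty$), and commutes with every quasi-Kashiwara operator. Running the symmetric construction from $u'$ to $u$ produces an inverse, so $\psi$ is a bijection, and therefore an isomorphism of quasi-crystal graphs.
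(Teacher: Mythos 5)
Your overall plan (send $u\mapsto u'$, push the assignment down along lowering operators, and reconcile different paths with \textbf{LQ3}/\textbf{LQ3$'$}) is in the same spirit as the paper's proof, but the step on which everything hinges --- that $\psi$ preserves $\qepsilon_i$ and $\qphi_i$, in particular the pattern of values $+\infty$ --- is not established, and the mechanism you propose for it fails. In step (ii) you assert that Proposition~\ref{prop:local_ax} gives a ``uniform update rule'' for these quantities under $\qf_{i}$ ``in terms of the current values''. It does not: when descending an $i$-edge, cases (2a) and (2c) of Proposition~\ref{prop:local_ax} are compatible with identical data at the upper vertex while giving $\qepsilon_{i+1}$ finite in one case and $+\infty$ in the other at the lower vertex. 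This already happens inside $\mathcal{B}_3^{\qtens 2}$, which satisfies the axioms: the words $13$ and $31$ have the same weight $(1,0,1)$ and the same values $\qepsilon_1=0$, $\qphi_1=1$, $\qepsilon_2=1$, $\qphi_2=0$, yet $\qf_1(13)=23$ has $\qepsilon_2(23)=1$, whereas $\qf_1(31)=32$ has $\qepsilon_2(32)=+\infty$. (These two vertices lie in different connected components, so this does not contradict the theorem; but it shows that whether a loop appears one step down is not a function of the weight and length data at the current vertex --- the deciding information sits higher up in the component.) Consequently your inductive invariant cannot be maintained by a pointwise update, and in step (iii) you can no longer guarantee that $\qf_i$ and $\qf_j$ are defined at $\psi(y)$, which is exactly what you need in order to invoke \textbf{LQ3$'$} in $\Q'$. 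The base case (i) has the same defect: $\qepsilon_i(u),\qepsilon_i(u')\in\{0,+\infty\}$ is correct, but \textbf{Q2} and \textbf{LQ1} do not decide which alternative occurs from $\wt(u)$ alone; for instance, nothing in those two axioms (nor in seminormality, which is vacuous at a vertex where every $\qepsilon_i=+\infty$) forces a highest weight vertex of weight $(1,1,0)$ to have $\qepsilon_2=0$, $\qphi_2=1$ as the highest weight word $21$ of $\mathcal{B}_3^{\qtens 2}$ does.

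Compare this with how the paper proceeds: it never transports the length data by a local rule along a lowering path. Instead it considers subsets $S\subseteq\Q$ containing $u$, closed under \emph{all} raising operators, and equipped with a bijection $\theta$ onto a subset of $\Q'$ with $\theta(u)=u'$ that intertwines the definedness and the action of every $\qe_i$; for a maximal such $S$, the equalities $\wt(\theta(x))=\wt(x)$, $\qepsilon_i(\theta(x))=\qepsilon_i(x)$ and $\qphi_i(\theta(x))=\qphi_i(x)$ are deduced from the fact that the whole $\qe_i$-string above $x$ lies in $S$ and is matched by $\theta$ (seminormality), not from the values at $x$ alone; only then is $\theta$ extended downward at a vertex of minimal rank outside $S$, with \textbf{LQ3}/\textbf{LQ3$'$} ensuring the extension does not depend on the chosen raising edge. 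To repair your argument you would have to strengthen the inductive hypothesis from ``$\wt$, $\qepsilon_i$, $\qphi_i$ agree at $x_k$ and $\psi(x_k)$'' to something like ``$\psi$ is already an isomorphism onto its image on the set of all elements lying above the vertices treated so far'', so that the length functions (and in particular the $+\infty$ pattern) are read off from matched raising strings rather than from a single vertex --- at which point you have essentially reconstructed the paper's proof.
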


To prove Theorem \ref{thm:wt_iso}, we recall the notion of \emph{rank} of $x \in \Q$ \cite[Section 4.4]{BumpSchi17}, where $\Q$ is a connected component of a quasi-crystal graph with unique highest weight element $u$, which is defined as $\mathrm{rank} (x) := \langle \wt(u) - \wt(x), \rho \rangle$, where $\rho$ is any vector such that $\langle \alpha_i, \rho \rangle = 1$, for all $i \in I$. This is well defined, and, in particular, if $\qe_{i_N} \cdots \qe_{i_1} (x) = u$, and all $\qe_{i_1} (x), \qe_{i_2} \qe_{i_1} (x), \ldots, \qe_{i_N} \cdots \qe_{i_1} (x)$ are defined, then $\mathrm{rank}(x) = N$.

\begin{proof}[Proof of Theorem \ref{thm:wt_iso}]
Let $\Omega$ be the set of subsets $S \subseteq \Q$ such that:
\begin{itemize}
\item $u \in S$.
\item If $x \in S$ and $\qe_i (x) \in Q$, then $\qe_i (x) \in S$.
\item There exists a subset $S' \subseteq \Q'$ and a bijection $\theta : S \longrightarrow S'$ such that $\theta(u) = u'$ and, given $x \in S$, then, for every $i \in I$,
$$\big(\qe_i (x) \neq \bot \Leftrightarrow \qe_i (\theta(x)) \neq \bot \big) \wedge  \big( \qe_i(x) \neq \bot \Rightarrow \theta (\qe_i (x)) = \qe_i (\theta (x)) \big).$$
\end{itemize}
We have $\Omega \neq \emptyset$, since $\{u\} \in \Omega$. Let $S$ be a maximal element of $\Omega$, with respect to set inclusion. We will show that $S = \Q$. We claim that $\theta$ preserves the length and weight functions, that is, for all $x \in S$ and $i \in I$, $\qepsilon_i (\theta(x)) = \qepsilon_i (x)$, $\qphi_i (\theta(x)) = \qphi_i (x)$ and $\wt (\theta(x)) = \wt (x)$. Given $x \in S$ and $i \in I$, there exist $i_1, \ldots, i_N \in I$, for some $N$, such that 
$$\qe_{i_N} \cdots \qe_{i_1} (x) = u.$$
By the definition of $S$, we have 
$$u' = \theta (u) = \theta (\qe_{i_N} \cdots \qe_{i_1} (x))
= \qe_{i_N} \cdots \qe_{i_1} ( \theta (x) )
$$
and thus,
$$
\wt (\theta (x)) = \wt (u') - \sum\limits_{j=1}^N \alpha_{i_j}
= \wt (u) - \sum\limits_{j=1}^N \alpha_{i_j}
= \wt (x).$$
Given $x \in S$, since $\Q$ is seminormal, if $\qepsilon_i (x) \neq + \infty$, then $\qepsilon_i (x) = \max \{k: \qe_i^k(x) \neq \bot\}$. By the definition of $S$, we have $\qe_i^k (x) \in S$, for $k = 1, \ldots, \qepsilon_i (x)$. In particular, $\qe_i^k (\theta(x)) = \theta (\qe_i^k (x))$. Thus, we have
\begin{align*}
\qepsilon_i (\theta(x)) &= \max \{k: \qe_i^k (\theta(x)) \neq \bot\}\\
&= \max \{k: \theta(\qe_i^k (x)) \neq \bot\}\\
&= \max \{k: \qe_i^k (x) \neq \bot\} = \qepsilon_i (x).
\end{align*}
Consequently, $\qepsilon_i (x) = + \infty$ if and only if $\qepsilon_i (\theta(x))=+ \infty$, and therefore, $\qepsilon_i (x) = \qepsilon_i (\theta(x))$. From \textbf{Q2}, we get $\qphi_i (x) = \qphi_i (\theta(x))$.

Now suppose that $\Q \neq S$, and let $z \in \Q \setminus S$ be an element of minimal rank. Since $z \not\in S$, then $z \neq u$, and thus, there exists $i \in I$ such that $\qe_i (z) \neq \bot$. Moreover, $\mathrm{rank} (\qe_i (z)) < \mathrm{rank} (z)$, hence $\qe_i (z) \in S$, as $z$ has minimal rank. Therefore, $\qphi_i ( \theta(\qe_i(z))) = \qphi_i (\qe_i (z)) > 0$, and thus, there exists $z' \in \Q'$ such that $\qe_i (z') = \theta (\qe_i (z))$, as depicted in the following diagram:

\begin{center}
\includegraphics[scale=1]{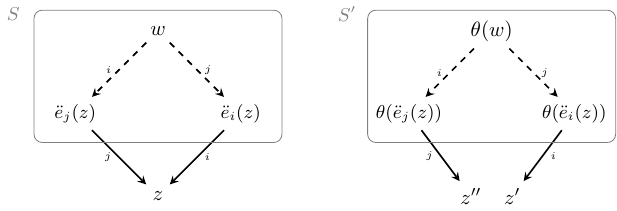} 
\end{center}

We will show that $z'$ does not depend on the choice of $i$, that is, if there exists $z'' \in \Q'$ such that $\qe_j (z'') = \theta (\qe_j (z))$, with $j \neq i$, then $z' = z''$. Since $\qe_i (z)$ and $\qe_j (z)$ are both defined, axiom \textbf{LQ3} implies that there exists $w \in S$ such that 
$$w = \qe_i \qe_j (z) = \qe_j \qe_i (z).$$
Therefore, we have $\theta (w) = \theta \qe_i \qe_j (z) = \theta \qe_j \qe_i (z)$, and by definition of $S$, $\theta(w) = \qe_i \theta \qe_j (z) = \qe_j \theta \qe_i (z)$. 
Thus, we have $\qf_i (\theta(w)) = \theta (\qe_j (w))$ and $\qf_j (\theta(w)) = \theta (\qe_i (w))$. It then follows from axiom \textbf{LQ3$'$} that
$$z' = \qf_i \qf_j (\theta (w)) = \qf_j \qf_i (\theta (w)) = z''.$$
Then, if $Q \neq S$, the map $\theta$ may be extend by defining $\theta (z) := z'$. This contradicts the maximality of $S$. Therefore, $Q = S$ and the map $\theta$ is a weight-preserving isomorphism.
\end{proof}

\begin{prop}
Let $\Q$ be a connected component of a seminormal quasi-crystal graph satisfying the local axioms, with highest weight element $u$. Then, $u$ has degree at most one.
\end{prop}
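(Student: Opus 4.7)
The plan is to argue by contradiction. The degree of $u$ in the quasi-crystal graph should be understood as the number of $i \in I$ for which $\qf_i(u) \neq \bot$ (equivalently, the number of $i$ with $\qphi_i(u)$ a positive integer), since $\qphi_i(u) = +\infty$ corresponds only to an $i$-labelled loop at $u$ and does not yield an outgoing edge to a distinct vertex (by \textbf{Q4}, $\qphi_i(u) = +\infty$ forces $\qf_i(u) = \bot$).

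Suppose, for a contradiction, that there are two such indices $i < j$. Because $u$ is a highest weight element one has $\qe_i(u) = \qe_j(u) = \bot$; the finiteness of $\qphi_i(u)$ and $\qphi_j(u)$ (combined with \textbf{Q2} and the finiteness of $\wt(u)$) makes $\qepsilon_i(u)$ and $\qepsilon_j(u)$ finite, and seminormality then pins both to $0$. Since $i+1 \leq j \leq n-1$ forces $i+1 \in I$, I will apply axiom \textbf{LQ1} at index $i$ to conclude $\qphi_{i+1}(u) = 0$. In the opposite direction, \textbf{LQ1} at index $j-1$ together with $\qphi_j(u) \neq 0$ gives $\qepsilon_{j-1}(u) \neq 0$, which --- by the same highest-weight-plus-seminormality observation --- can only be $+\infty$, whence also $\qphi_{j-1}(u) = +\infty$ by \textbf{Q2}.

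If $j = i+1$ this is already the contradiction ($\qepsilon_i(u) = 0$ versus $+\infty$). Otherwise, I would iterate \textbf{LQ1} at indices $j-2, j-3, \ldots, i+1$: each step converts the relation $\qphi_k(u) = +\infty \neq 0$ into $\qepsilon_{k-1}(u) = +\infty$, and hence $\qphi_{k-1}(u) = +\infty$; after finitely many steps one reaches $\qphi_{i+1}(u) = +\infty$, contradicting $\qphi_{i+1}(u) = 0$. I do not expect a real obstacle: the argument is essentially a one-dimensional cascade along the Dynkin diagram, relying only on the biconditional form of \textbf{LQ1} and on the observation that a highest weight element in a seminormal quasi-crystal satisfies $\qepsilon_i(u) \in \{0, +\infty\}$ for every $i$. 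The only subtlety is the initial bookkeeping that separates the \emph{loop} and \emph{outgoing-edge} interpretations of $\qphi_i(u) \neq 0$.
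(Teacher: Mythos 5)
Your proposal is correct; all steps check out (the dichotomy $\qepsilon_m(u)\in\{0,+\infty\}$ for a highest weight element of a seminormal quasi-crystal, the finiteness transfer between $\qepsilon$ and $\qphi$ via \textbf{Q2} since the weight pairing is an integer, and the applicability of \textbf{LQ1} at every index in your cascade), and your bookkeeping that loops do not contribute to the degree matches the paper's implicit convention. The route, however, is genuinely lighter than the paper's. The paper fixes the single index $k$ with $\qf_k(u)\neq\bot$, propagates zeros upward ($\qepsilon_k(u)=0\Rightarrow\qphi_{k+1}(u)=0\Rightarrow\qepsilon_{k+1}(u)=0\Rightarrow\cdots$) exactly in the spirit of your first step, but for the indices below $k$ it argues differently: it gets $\qepsilon_{k-1}(u)=+\infty$ from \textbf{LQ1} and then rules out $\qf_{k-2}(u),\qf_{k-3}(u),\ldots$ by invoking Corollary \ref{cor:infs}, whose proof rests on axiom \textbf{LQ2}, to produce elements $\qe_{k-2}^{\,l}(u)$ with $l>0$ contradicting highest-weightness. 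You instead take two hypothetical out-edges $i<j$ and squeeze along the chain: $\qphi_{i+1}(u)=0$ from \textbf{LQ1} at $i$, while $\qphi_j(u)>0$ forces $\qepsilon_{j-1}(u)=+\infty$ and the infinity cascades down through \textbf{LQ1}, \textbf{Q2} and the $\{0,+\infty\}$ dichotomy until it collides with that zero. The net effect is a proof that uses only \textbf{LQ1} together with the general quasi-crystal axioms and seminormality, avoiding \textbf{LQ2} and Corollary \ref{cor:infs} altogether, and it also makes explicit the downward iteration that the paper compresses into ``applying the same reasoning''; what the paper's detour buys is reuse of a structural fact about where $+\infty$ values can occur, but that extra machinery is not needed for this statement.
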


\begin{proof}
If $\Q$ has only one vertex, then the result is trivial. Suppose that $\Q$ has at least two vertices. Therefore, there exists $k \in I$ such that $\qf_k (u) \neq \bot$, and consequently, $\qphi_k (u), \qepsilon_k (u) \neq + \infty$, by \textbf{Q4}. Since $u$ is the highest weight element, we have $\qe_k (u) = \bot$, and since $\Q$ is seminormal, this implies that $\qepsilon_k (u)=0$.

We claim that $\qphi_j (u) = 0$, for $j \geq k$. If $k=n-1$, the result is trivial. Otherwise, axiom \textbf{LQ1} implies that $\qphi_{k+1} (u) = 0$, and, in particular, $\qphi_{k+1} (u) \neq + \infty$. Thus, by \textbf{Q2}, $\qepsilon_{k+1} (u) \neq + \infty$, and since $u$ is the highest weight element and $\Q$ is seminormal, we get $\qepsilon_{k+1} (u) = 0$. Then, by axiom \textbf{LQ1} we obtain $\qphi_{k+2} (u) = 0$, and applying the same reasoning, we have $\qphi_j (u) =0$, for $j \geq k$. Therefore, $\qf_j (u) = \bot$, for $j \geq k$.

If $k = 1$, the proof is done. Thus, suppose that $k > 1$, and we will prove that $\qf_j (u) = \bot$, for $j < k$.
Since $\qphi_k (u) \neq 0$ (because $\qf_k (u) \neq \bot$), by axiom \textbf{LQ1} we have $\qepsilon_{k-1} (u) \neq 0$. Then, as $\qe_{k-1} (u) = \bot$ and $\Q$ is seminormal, we must have $\qepsilon_{k-1} (u) = + \infty$. Consequently, by \textbf{Q4}, $\qf_{k-1} (u) = \bot$. 
Now suppose that $\qf_{k-2} (u)$ is defined. Then, since $\qepsilon_{k-1} (u) = + \infty$, by Corollary \ref{cor:infs}, there exists $l >0$ such that $\qepsilon_{k-1} (\qe_{k-2}^l (u)) \neq +\infty$. Since $l > 0$, this also contradicts the fact that $u$ is the highest weight element. Therefore, $\qf_{k-2} (u)$ is undefined. Applying the same reasoning, we conclude that $\qf_{j} (u)$ is undefined, for $j < k$. Thus, there is exactly one edge coming from $u$, and thus $u$ has degree one.
\end{proof}

\subsection{Quasi-tensor products}

We recall the notion of quasi-tensor product introduced by Cain, Guilherme and Malheiro \cite{CGM23}. Throughout this section, we will only consider seminormal quasi-crystals.

\begin{defin}[{\cite[Theorem 5.1]{CGM23}}]\label{def:qtens}
Let $\Q$ and $\Q'$ be seminormal quasi-crystals of type $A_{n-1}$. The quasi-tensor product $\Q \qtens \Q'$ is the Cartesian product $\Q \times \Q'$ together with the maps defined by:
\begin{enumerate}
\item $\wt(x \qtens x') = \wt(x) + \wt(x')$, for $x \in \Q$, $x' \in \Q'$.
\item If $\qphi_i(x) >0$ and $\qepsilon_i(x')>0$, for $i \in I$,
$$
\qe_i(x \qtens x') = \qf_i (x \qtens x') = \bot \qquad \text{and} \qquad
\qepsilon_i (x \qtens x') = \qphi_i (x \qtens x') = + \infty.
$$

\item Otherwise, 
$$\qe_i(x \qtens x') = 
\begin{cases}
\qe_i(x) \qtens x' & \text{if}\; \qphi_i(x) \geq \qepsilon_i(x')\\
x \qtens \qe_i(x') & \text{if}\; \qphi_i(x) < \qepsilon_i(x')
\end{cases}$$

$$\qf_i(x \qtens x') = 
\begin{cases}
\qf_i(x) \qtens x' & \text{if}\; \qphi_i(x) > \qepsilon_i(x')\\
x \qtens \qf_i(x') & \text{if}\; \qphi_i(x) \leq \qepsilon_i(x')
\end{cases}$$
and
\begin{align*}
\qepsilon_i(x) &= \max\{ \qepsilon_i(x), \qepsilon_i(x') - \langle \wt(x), \alpha_i \rangle \}\\
\qphi_i (x) &= \max \{ \qphi_i(x) + \langle \wt(x'), \alpha_i \rangle, \qphi_i(x') \}
\end{align*}
where $x \qtens \bot = \bot \qtens x' = \bot$.
\end{enumerate}
\end{defin}

The quasi-tensor product $\Q \qtens \Q'$ is a seminormal quasi-crystal of type $A_{n-1}$ \cite[Theorem 5.1]{CGM23}. We remark that, with this convention, $x \qtens y$ is identified with the word $yx$.
The following Lemma will be often used and is a direct consequence of \cite[Theorem 5.1]{CGM23}.

\begin{lem}\label{lem:tens_inf}
Let $x \in \Q$ and $x' \in \Q'$. If $\qepsilon_i (x \qtens x') = + \infty$, then $\qphi_{i}(x) > 0$ or $\qepsilon_{i}(x') > 0$.
\end{lem}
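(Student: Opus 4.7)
My plan is to prove this by contraposition: assume $\qphi_i(x) \not> 0$ and $\qepsilon_i(x') \not> 0$, and derive $\qepsilon_i(x \qtens x') \neq +\infty$. Since $\Q$ and $\Q'$ are seminormal, we have $\qphi_i(x), \qepsilon_i(x') \in \mathbb{Z}_{\geq 0} \sqcup \{+\infty\}$, so the assumption forces $\qphi_i(x) = 0$ and $\qepsilon_i(x') = 0$.

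The first branch of Definition~\ref{def:qtens} (the only place in the construction where the value $+\infty$ is declared for $\qepsilon_i(x \qtens x')$) requires $\qphi_i(x) > 0$ and $\qepsilon_i(x') > 0$, both of which fail. Hence we land in the ``otherwise'' branch, where
\[
\qepsilon_i(x \qtens x') = \max\bigl\{\qepsilon_i(x),\ \qepsilon_i(x') - \langle \wt(x), \alpha_i \rangle\bigr\}.
\]
I would then check that both entries of this maximum are finite integers. For the second entry this is immediate since $\qepsilon_i(x') = 0$ and $\wt(x) \in \mathbb{Z}^n$. For the first, axiom \textbf{Q2} gives $\qepsilon_i(x) = \qphi_i(x) - \langle \wt(x), \alpha_i \rangle = -\langle \wt(x), \alpha_i \rangle$, which is again a finite integer. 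The maximum is therefore finite, contradicting $\qepsilon_i(x \qtens x') = +\infty$.

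There is no real obstacle here: the only delicate point is ruling out $\qepsilon_i(x) = +\infty$ inside the ``otherwise'' branch, which is precisely what \textbf{Q2} combined with the finiteness of $\qphi_i(x) = 0$ provides. The argument is essentially a direct unfolding of Definition~\ref{def:qtens}.
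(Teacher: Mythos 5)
Your argument is correct and is essentially the paper's: the paper offers no separate proof, calling the lemma a direct consequence of the quasi-tensor construction (Definition \ref{def:qtens}, i.e.\ \cite[Theorem 5.1]{CGM23}), and your contrapositive is exactly that unfolding. You also correctly handle the one genuine subtlety — that the ``otherwise'' branch could still output $+\infty$ through the maximum — by using \textbf{Q2} with $\qphi_i(x)=0$ to force $\qepsilon_i(x)$ finite, so nothing is missing.
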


\begin{prop}[{\cite[Proposition 5.3]{CGM23}}]\label{prop:ricardo}
Let $\Q$ and $\Q'$ be seminormal quasi-crystals of the same type, and let $x \in \Q$ and $x' \in \Q'$ be such that $\qphi_i(x) = 0$ or $\qepsilon_i(x')=0$. Then:
$$
\qe_i(x \qtens x') = \begin{cases}
\qe_i(x) \qtens x' & \text{if } \qepsilon_i(x') =0\\
x \qtens \qe_i(x') & \text{if } \qepsilon_i(x') >0
\end{cases} \qquad
\qf_i(x \qtens x') = \begin{cases}
\qf_i(x) \qtens x' & \text{if } \qphi_i(x) > 0\\
x \qtens \qf_i(x') & \text{if } \qphi_i(x) = 0
\end{cases}
$$
and $\qepsilon_i(x \qtens x') = \qepsilon_i (x)+ \qepsilon_i (x')$, $\qphi_i (x \qtens x') = \qphi_i (x)+ \qphi_i (x')$.
\end{prop}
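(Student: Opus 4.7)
The plan is to carry out a direct case analysis using the disjunction $\qphi_i(x)=0 \vee \qepsilon_i(x')=0$ in the hypothesis and comparing against the three clauses of Definition~\ref{def:qtens}. The first observation is that clause~(2) of that definition, which governs the ``loop'' case, requires both $\qphi_i(x)>0$ and $\qepsilon_i(x')>0$ simultaneously, which is exactly what the hypothesis forbids. Hence we are always in clause~(3), and the whole proof reduces to matching the threshold inequalities $\qphi_i(x)$ versus $\qepsilon_i(x')$ appearing there with the dichotomies in the statement.

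For $\qe_i$: if $\qepsilon_i(x')=0$ then $\qphi_i(x)\geq 0=\qepsilon_i(x')$ by seminormality, so clause~(3) yields $\qe_i(x\qtens x')=\qe_i(x)\qtens x'$; if $\qepsilon_i(x')>0$ then the hypothesis forces $\qphi_i(x)=0<\qepsilon_i(x')$, yielding $\qe_i(x\qtens x')=x\qtens\qe_i(x')$. The argument for $\qf_i$ is symmetric: $\qphi_i(x)>0$ forces $\qepsilon_i(x')=0<\qphi_i(x)$, while $\qphi_i(x)=0\leq\qepsilon_i(x')$ is automatic. This dispatches the formulas for the quasi-Kashiwara operators.

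For the length functions I would substitute into the formulas from Definition~\ref{def:qtens} and rewrite $\langle\wt(x),\alpha_i\rangle=\qphi_i(x)-\qepsilon_i(x)$ using axiom \textbf{Q2} (and likewise for $x'$). When $\qepsilon_i(x')=0$ the second term in the max defining $\qepsilon_i(x\qtens x')$ becomes $\qepsilon_i(x)-\qphi_i(x)\leq\qepsilon_i(x)$, so the maximum is $\qepsilon_i(x)=\qepsilon_i(x)+\qepsilon_i(x')$; when $\qepsilon_i(x')>0$ the hypothesis $\qphi_i(x)=0$ rewrites the same term as $\qepsilon_i(x)+\qepsilon_i(x')\geq\qepsilon_i(x)$, which is again the claimed value. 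The computation for $\qphi_i(x\qtens x')$ is dual, using $\qphi_i(x)=0$ or $\qepsilon_i(x')=0$ in the $\wt(x')$-bracket.

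The only mild obstacle is the bookkeeping with $\pm\infty$ values: one needs to verify that in every subcase the conventions $m+(+\infty)=+\infty$ from the paragraph preceding Definition~\ref{def:quasicrystal} make both sides of $\qepsilon_i(x\qtens x')=\qepsilon_i(x)+\qepsilon_i(x')$ and $\qphi_i(x\qtens x')=\qphi_i(x)+\qphi_i(x')$ meaningful and equal. In particular, if $\qepsilon_i(x')=+\infty$ then $\qepsilon_i(x')>0$, the hypothesis gives $\qphi_i(x)=0$, and both the ``$\qe_i$ acts on $x'$'' branch and the $+\infty=+\infty$ identity hold without issue; the dual case is handled symmetrically. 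No deeper structural input beyond \textbf{Q2} and seminormality is required.
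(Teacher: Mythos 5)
Your proposal is correct. There is, however, nothing in the paper to compare it with: the statement is imported verbatim from \cite[Proposition 5.3]{CGM23} and is not proved in this paper, so your argument stands as an independent verification. The structure is sound: the hypothesis $\qphi_i(x)=0$ or $\qepsilon_i(x')=0$ rules out clause (2) of Definition~\ref{def:qtens}; seminormality ($\qepsilon_i,\qphi_i\in\mathbb{Z}_{\geq 0}\sqcup\{+\infty\}$) settles the threshold comparisons between $\qphi_i(x)$ and $\qepsilon_i(x')$ exactly as you state, giving the claimed branches for $\qe_i$ and $\qf_i$ (including the overlap case $\qphi_i(x)=\qepsilon_i(x')=0$, where both descriptions agree); and axiom \textbf{Q2} collapses the two max-formulas to the sums $\qepsilon_i(x)+\qepsilon_i(x')$ and $\qphi_i(x)+\qphi_i(x')$. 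One presentational caution: the substitution $\langle\wt(x),\alpha_i\rangle=\qphi_i(x)-\qepsilon_i(x)$ (and its analogue for $x'$) is only meaningful when the length functions are finite, since $(+\infty)-(+\infty)$ is undefined, whereas the weight pairing itself is always a finite integer; in the subcases where $\qepsilon_i(x)=\qphi_i(x)=+\infty$ or $\qepsilon_i(x')=\qphi_i(x')=+\infty$ one should evaluate the max expressions directly (the infinite term dominates and matches the convention $m+(+\infty)=+\infty$), which is essentially what your closing paragraph does, so this is a matter of phrasing rather than a gap.
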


We may now state the following result, which is proved in the next section. 

\begin{thm}\label{thm:qtens}
Let $\Q$ and $\Q'$ be seminormal quasi-crystal graphs satisfying the local quasi-crystal axioms of Definition \ref{def:local_axioms}. Then, $\Q \qtens \Q'$ satisfies the same axioms.
\end{thm}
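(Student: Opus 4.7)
Throughout fix $z = x \qtens x' \in \Q \qtens \Q'$. The strategy is to verify each of the four axioms of Definition~\ref{def:local_axioms} on $\Q \qtens \Q'$ by direct case analysis using the formulas of Definition~\ref{def:qtens}, Proposition~\ref{prop:ricardo} and Lemma~\ref{lem:tens_inf}, reducing each instance to the corresponding axiom for $\Q$ or $\Q'$.

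For axiom \textbf{LQ1}, I would first note that $\qepsilon_i(z) = 0$ rules out the infinity regime of Definition~\ref{def:qtens}(2). The remaining max-formula, together with \textbf{Q2}, forces $\qepsilon_i(x) = 0$ and $\qepsilon_i(x') = 0$. Applying \textbf{LQ1} in each factor yields $\qphi_{i+1}(x) = \qphi_{i+1}(x') = 0$, and the formula for $\qphi_{i+1}(z)$ then reduces to $\max\{\langle \wt(x'), \alpha_{i+1}\rangle, 0\}$, which equals $0$ because $\qphi_{i+1}(x') = 0$ gives $\langle \wt(x'), \alpha_{i+1}\rangle = -\qepsilon_{i+1}(x') \leq 0$ by \textbf{Q2}. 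The reverse implication is symmetric.

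For axiom \textbf{LQ2}, assume $\qe_i(z) = w$, and split according to whether $\qe_i$ acts on the left factor (so $w = \qe_i(x) \qtens x'$) or on the right ($w = x \qtens \qe_i(x')$). For each choice of $j$ with $|i-j| > 1$, $j = i+1$, or $j = i-1$, I would expand $\qepsilon_j(z)$ and $\qepsilon_j(w)$ from the max-formula and apply \textbf{LQ2} (equivalently Proposition~\ref{prop:local_ax}) in $\Q$ or $\Q'$ to the moved factor, while on the non-moved factor $\qepsilon_j$ and $\qphi_j$ are unchanged. The genuinely new phenomenon to check is when $\qepsilon_j(w) = +\infty$ arises through the tensor product while both factors have finite $\qepsilon_j$; by Lemma~\ref{lem:tens_inf} this happens only when the moved factor crosses one of the thresholds $\qphi_j(x) > 0$ or $\qepsilon_j(x') > 0$, which is exactly the loop-creation setting of Proposition~\ref{prop:local_ax}(2) or (3) and aligns with the three cases of \textbf{LQ2}.

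For axioms \textbf{LQ3} and \textbf{LQ3$'$}, assume $\qe_i(z)$ and $\qe_j(z)$ are both defined with $i \neq j$. There are four gross cases depending on whether each of $\qe_i, \qe_j$ acts on the left or the right tensor factor. When both act on the same factor, \textbf{LQ3} in that factor gives $\qe_i \qe_j (x) = \qe_j \qe_i (x)$ (resp.\ on $x'$), after which one must verify that once one operator has been applied, the other still selects the same factor; this follows by unwinding the threshold $\qphi_k(x) \gtreqless \qepsilon_k(x')$ via Proposition~\ref{prop:local_ax}. When the two operators act on different factors, they commute for free, and the remaining task is to check that the acting-factor choice for each is not altered by applying the other. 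Axiom \textbf{LQ3$'$} is handled dually with $\qf_i, \qf_j$ and the dual formulas. I expect the main obstacle to be this commutation analysis when $|i-j| = 1$: in the adjacent case the thresholds $\qphi_k(x)$ and $\qepsilon_k(x')$ can shift by $\pm 1$ and may trigger or destroy the infinity regime of Definition~\ref{def:qtens}(2); ruling out, for example, that applying $\qe_j$ moves $z$ into a loop configuration for $i$ requires carefully combining \textbf{LQ1}, \textbf{LQ2} and Proposition~\ref{prop:local_ax} in both factors simultaneously.
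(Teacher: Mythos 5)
Your outline follows the same overall route as the paper (use the fact that $\qe_i(x\qtens x')\neq\bot$ forces $\qphi_i(x)=0$ or $\qepsilon_i(x')=0$, invoke Proposition~\ref{prop:ricardo} to get explicit length formulas, and reduce each axiom to \textbf{LQ1}--\textbf{LQ3} in the factors), and your argument for \textbf{LQ1} is essentially complete and correct. But for \textbf{LQ2} and \textbf{LQ3}/\textbf{LQ3$'$} you have only stated a plan: phrases such as ``I would expand \dots and apply \textbf{LQ2}'', ``this follows by unwinding the threshold'', and ``requires carefully combining \textbf{LQ1}, \textbf{LQ2} and Proposition~\ref{prop:local_ax} in both factors simultaneously'' defer exactly the case analyses in which the content of the theorem lies. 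For \textbf{LQ2}\,(2) the delicate part is the biconditional: one must show that $\qepsilon_{i+1}(x\qtens x')=+\infty$ and $\qepsilon_{i+1}(y\qtens y')\neq+\infty$ together force $\qepsilon_i(y\qtens y')=0$, and conversely that $\qepsilon_{i+1}(y\qtens y')=0$ forces $\qepsilon_{i+1}(x\qtens x')=\qepsilon_{i+1}(y\qtens y')$; each requires splitting according to whether $\qe_i$ acted on the left or right factor, tracking which of $\qphi_{i+1}(y)=0$ or $\qepsilon_{i+1}(y')=0$ holds, and feeding \textbf{LQ1} and \textbf{LQ2} of the factors back into the additive formula of Proposition~\ref{prop:ricardo}. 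None of this is carried out in your proposal, and it is not routine bookkeeping.

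The gap is most visible in \textbf{LQ3}. Two specific facts are needed that your sketch does not identify, let alone prove: (i) when $\qe_i$ acts on the left factor ($\qepsilon_i(x')=0$) and $\qe_j$ acts on the right factor ($\qepsilon_j(x')>0$, $\qphi_j(x)=0$), the case $j=i+1$ is impossible --- indeed $\qe_i(x)\neq\bot$ gives $\qepsilon_i(x)>0$, so \textbf{LQ1} gives $\qphi_{i+1}(x)>0$, which together with $\qepsilon_{i+1}(x')>0$ puts $x\qtens x'$ in the loop regime for $i+1$, contradicting $\qe_{i+1}(x\qtens x')\neq\bot$; and (ii) in the surviving adjacent case $j=i-1$ one must prove $\qphi_{i-1}(\qe_i(x))\neq+\infty$ (via \textbf{LQ2}\,(3), since $\qphi_{i-1}(x)=0$) in order to conclude that after applying one operator the other still acts on the expected factor. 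Your remark that ``they commute for free, and the remaining task is to check that the acting-factor choice \dots is not altered'' names the issue but gives no argument, and you flag the adjacent case as ``the main obstacle'' without resolving it. As written, the proposal establishes \textbf{LQ1} for $\Q\qtens\Q'$ but not \textbf{LQ2}, \textbf{LQ3} or \textbf{LQ3$'$}, so it does not yet prove the theorem.
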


Recall that the \emph{standard crystal} $\mathcal{B}_n$ of type $A_{n-1}$, depicted in Figure \ref{fig:std_crystal}, is the crystal structure on $\{1 < \cdots < n\}$ in which $\cf_i (j)=j+1$ and $\cphi_i (j) = \delta_{i,j}$,  for $j \in I$.

\begin{figure}[h]
\begin{center}
\begin{tikzpicture}[scale=0.6]
\node (1) at (0,0) {$1$};
\node (2) at (2,0) {$2$};
\node (3) at (4,0) {$3$};
\node (4) at (6,0) {};
\node at (7,0) {$\cdots$};
\node (5) at (8,0) {}; 
\node (6) at (10,0) {$n$};

\draw (1) edge[-stealth, thick, red, above] node[black] {\tiny $1$} (2);
\draw (2) edge[-stealth, thick, blue, above] node[black] {\tiny $2$} (3);
\draw (3) edge[-stealth, thick, green!70!black, above] node[black] {\tiny $3$} (4);
\draw (5) edge[-stealth, thick, blue!50!green, above] node[black] {\tiny $n-1$} (6);
\end{tikzpicture}
\end{center}
\caption{The standard crystal $\mathcal{B}_n$ of type $A_{n-1}$.}
\label{fig:std_crystal}
\end{figure}
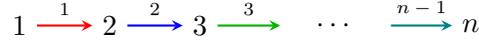

\begin{figure}
\begin{center}
\includegraphics[scale=0.8]{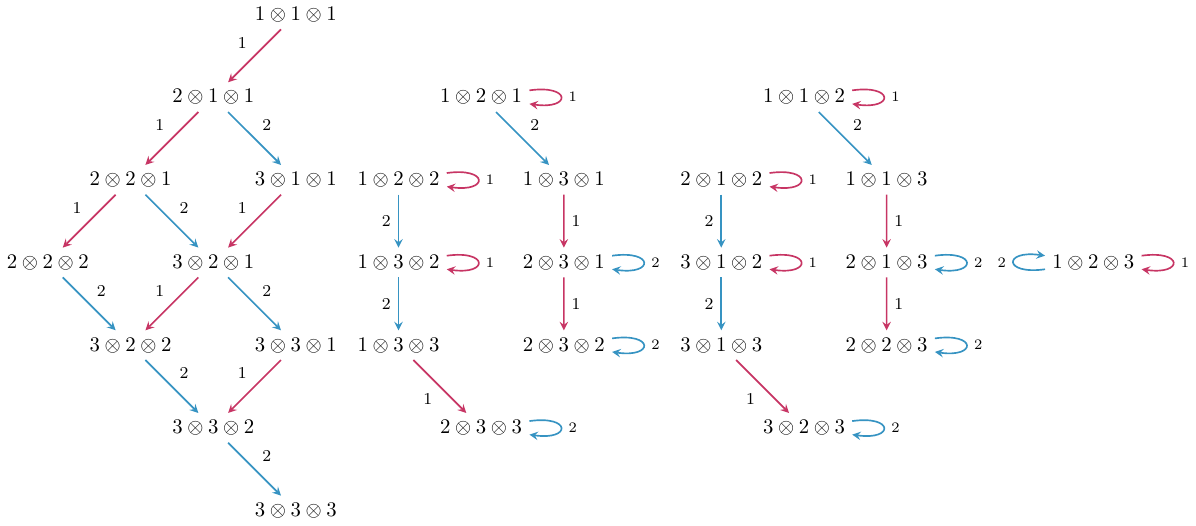}
\end{center}
\caption{The connected components of the quasi-tensor product $\mathcal{B}_3^{\qtens 3}$.}
\label{fig:quasi_tens}
\end{figure}

\begin{figure}
\begin{center}
\includegraphics[scale=0.8]{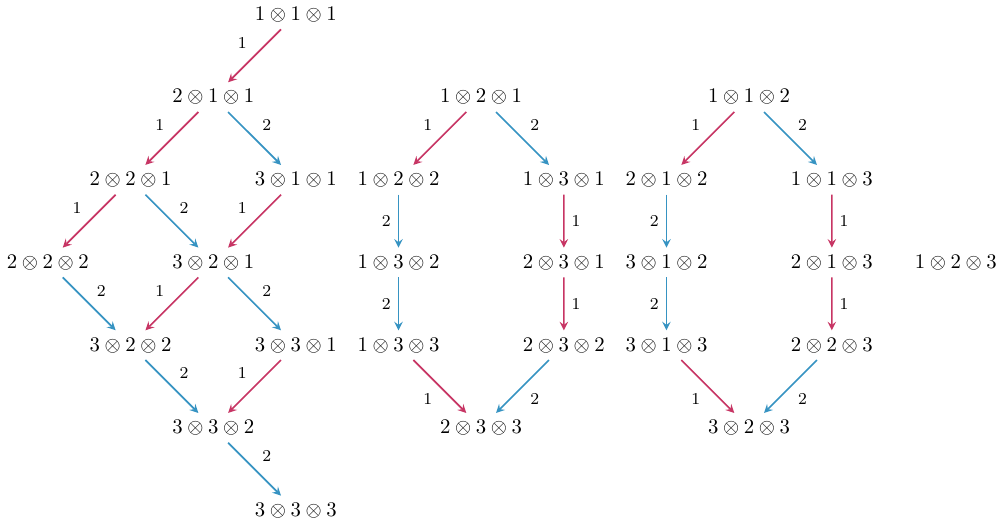}
\end{center}
\caption{The connected components of the usual tensor product $\mathcal{B}_3^{\otimes 3}$.}
\label{fig:crystal_tens}
\end{figure}

As a consequence of Theorem \ref{thm:qtens}, we have the following result. Figures \ref{fig:quasi_tens} and \ref{fig:crystal_tens} illustrate the differences between the usual tensor product of crystals and the quasi-tensor product, on the standard crystal $\mathcal{B}_3$ of type $A_2$.

\begin{cor}
Let $\mathcal{B}_n$ be the standard crystal of type $A_{n-1}$. Then, every connected component of $\mathcal{B}_n^{\qtens k}$ satisfies the local quasi-crystal axioms of Definition \ref{def:local_axioms}.
\end{cor}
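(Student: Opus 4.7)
The plan is to invoke Theorem \ref{thm:qtens} inductively. That theorem ensures that the quasi-tensor product of two seminormal quasi-crystal graphs satisfying the local axioms again satisfies them, so by induction on $k$ it will suffice to verify the base case: the standard crystal $\mathcal{B}_n$ is seminormal and satisfies the local quasi-crystal axioms of Definition \ref{def:local_axioms}. The conclusion for individual connected components is then automatic, since these axioms are pointwise conditions involving only a vertex together with the targets of its quasi-Kashiwara operators, and those targets, by \textbf{Q1}, lie in the same connected component as the original vertex.

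To verify the axioms on $\mathcal{B}_n$, I would use the explicit description $\qepsilon_i(j) = \delta_{i+1,j}$ and $\qphi_i(j) = \delta_{i,j}$, which in particular makes $\mathcal{B}_n$ a seminormal crystal with all length functions finite. Axiom \textbf{LQ1} becomes the tautology $\qepsilon_i(j) = 0 \Leftrightarrow j \neq i+1 \Leftrightarrow \qphi_{i+1}(j) = 0$. Axioms \textbf{LQ3} and \textbf{LQ3$'$} hold vacuously, as every vertex of $\mathcal{B}_n$ is the source (respectively, target) of at most one edge, so no two distinct raising (respectively, lowering) operators are simultaneously defined on the same vertex. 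For \textbf{LQ2}, the only nontrivial instance is $\qe_i(i+1) = i$; but then $\qepsilon_j$ and $\qphi_j$ both vanish at each endpoint whenever $j \neq i$, so each of the three clauses reduces to a trivially true statement (the $+\infty$ hypothesis appearing in clauses (2) and (3) never occurs).

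With the base case in hand, the corollary follows directly from $k-1$ applications of Theorem \ref{thm:qtens}. There is no genuine obstacle here: the structural content has been absorbed into Theorem \ref{thm:qtens}, and the remainder is a routine verification that exploits the fact that the length functions on $\mathcal{B}_n$ take values in $\{0,1\}$.
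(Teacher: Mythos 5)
Your proposal is correct and takes essentially the same route as the paper, which likewise notes that $\mathcal{B}_n$ (seminormal, with all length functions finite) satisfies the local axioms and then invokes Theorem \ref{thm:qtens} repeatedly. One small caveat: your claim that $\qepsilon_j$ and $\qphi_j$ both vanish at each endpoint of the edge $\qe_i(i+1)=i$ for every $j\neq i$ is not literally true (e.g.\ $\qepsilon_{i-1}(i)=1$ and $\qphi_{i+1}(i+1)=1$), but the functions actually involved in the clauses of \textbf{LQ2} — namely $\qepsilon_{i+1}$ in clause (2) and $\qphi_{i-1}$ in clause (3) — do take the same (finite) value at both endpoints, so the verification goes through as you say.
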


\begin{proof}
The standard crystal $\mathcal{B}_n$ clearly satisfies the local quasi-crystal axioms. If follows from Theorem \ref{thm:qtens} that $\mathcal{B}_n^{\qtens k}$ satisfies the same axioms as well. 
\end{proof}

\subsection{Proof of Theorem \ref{thm:qtens}}

\begin{prop}\label{prop:LQ1_qtens}
$\Q \qtens \Q'$ satisfies axiom \emph{\textbf{LQ1}}.
\end{prop}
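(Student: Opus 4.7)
The plan is to prove both directions of the biconditional $\qepsilon_i(x \qtens x') = 0 \Leftrightarrow \qphi_{i+1}(x \qtens x') = 0$ by unfolding the formulas in Definition \ref{def:qtens} and applying axiom \textbf{LQ1} on each factor $\Q$ and $\Q'$. Throughout, the crucial observation is that since $0$ is finite, we are always pushed into the ``otherwise'' clause of the tensor definition, which rules out one of the two subcases $\qphi_i(x) > 0 \wedge \qepsilon_i(x') > 0$ (respectively, $\qphi_{i+1}(x)>0\wedge\qepsilon_{i+1}(x')>0$). The identity $\langle \wt(z), \alpha_j \rangle = \qphi_j(z) - \qepsilon_j(z)$ from \textbf{Q2} will be used repeatedly to convert weights into length-function data.

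For the forward direction, assume $\qepsilon_i(x \qtens x') = 0$. Finiteness puts us in the ``otherwise'' case for index $i$, giving $\qepsilon_i(x \qtens x') = \max\{\qepsilon_i(x), \qepsilon_i(x') - \langle \wt(x), \alpha_i \rangle\} = 0$. In particular $\qepsilon_i(x) = 0$, hence by \textbf{LQ1} on $\Q$ we get $\qphi_{i+1}(x) = 0$. Substituting $\langle \wt(x), \alpha_i \rangle = \qphi_i(x)$ and using the subcase split (either $\qphi_i(x)=0$ or $\qepsilon_i(x')=0$), we deduce $\qepsilon_i(x') = 0$, and so by \textbf{LQ1} on $\Q'$ also $\qphi_{i+1}(x') = 0$; this in turn forces $\qepsilon_{i+1}(x')$ to be finite (otherwise \textbf{Q2} would make $\qphi_{i+1}(x')$ infinite), and therefore $\langle \wt(x'), \alpha_{i+1} \rangle = -\qepsilon_{i+1}(x') \leq 0$. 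Since $\qphi_{i+1}(x) = 0$ we are again in the ``otherwise'' case for $i+1$, and the formula collapses to $\qphi_{i+1}(x \qtens x') = \max\{\langle \wt(x'), \alpha_{i+1}\rangle, 0\} = 0$.

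For the converse, assume $\qphi_{i+1}(x \qtens x') = 0$ and run the dual argument. Finiteness and the max formula immediately give $\qphi_{i+1}(x') = 0$, so by \textbf{LQ1} on $\Q'$ we get $\qepsilon_i(x') = 0$. The inequality $\qphi_{i+1}(x) + \langle \wt(x'), \alpha_{i+1}\rangle \leq 0$ combined with the subcase split (either $\qphi_{i+1}(x) = 0$ directly, or $\qepsilon_{i+1}(x')=0$, in which case $\langle \wt(x'), \alpha_{i+1}\rangle = 0$ and hence $\qphi_{i+1}(x) \leq 0$) forces $\qphi_{i+1}(x) = 0$. Then \textbf{LQ1} on $\Q$ yields $\qepsilon_i(x) = 0$, and plugging back into the formula gives $\qepsilon_i(x \qtens x') = \max\{0, -\qphi_i(x)\} = 0$, the last equality because $\qphi_i(x) \in \mathbb{Z}_{\geq 0} \sqcup \{+\infty\}$.

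The main obstacle is purely bookkeeping: one must systematically rule out the $+\infty$ branch of the quasi-tensor product using the assumed finiteness, and then chase \textbf{LQ1} across the factors while carefully distinguishing the two subcases of the ``otherwise'' clause. No deeper combinatorial input is needed beyond \textbf{Q2} and \textbf{LQ1} on $\Q$ and $\Q'$.
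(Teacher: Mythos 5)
Your proof is correct and follows essentially the same route as the paper: in each direction one forces the quasi-tensor length functions into the finite case, reduces to the factors, applies \textbf{LQ1} in $\Q$ and $\Q'$, and reassembles. The only difference is that the paper invokes Proposition \ref{prop:ricardo} (additivity of $\qepsilon_i$ and $\qphi_i$ when $\qphi_i(x)=0$ or $\qepsilon_i(x')=0$) to shortcut the computation, whereas you unfold the max formulas of Definition \ref{def:qtens} directly and handle the weight pairings via \textbf{Q2}, which is valid but carries a bit more bookkeeping.
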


\begin{proof}
Suppose that $\qepsilon_i (x \qtens x') = 0$. Then, by Definition \ref{def:qtens}, we must have $\qphi_i (x) = 0$ or $\qepsilon_i (x')=0$. Proposition \ref{prop:ricardo} then implies that $\qepsilon_i (x \qtens x') = \qepsilon_i (x) + \qepsilon_i (x') =0$. Since $\Q$ and $\Q'$ are seminormal, we have $\qepsilon_i (x), \qepsilon_i (x') \geq 0$. Therefore, we must have $\qepsilon_i (x) = \qepsilon_i (x') = 0$, and then, axiom \textbf{LQ1} implies that $\qphi_{i+1} (x) = \qphi_{i+1} (x') = 0$. In particular, since $\qphi_{i+1}(x) =0$, by Proposition \ref{prop:ricardo}, we have $\qphi_{i+1} (x \qtens x') = \qphi_{i+1} (x) + \qphi_{i+1} (x') =0$. The reverse implication is proved similarly.
\end{proof}

\begin{prop}
$\Q \qtens \Q'$ satisfies \emph{\textbf{LQ2}}.
\end{prop}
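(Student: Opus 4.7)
The plan is to set $X := x \qtens x'$, assume $Y := \qe_i(X) \neq \bot$, and use Proposition~\ref{prop:ricardo} to split into two subcases: subcase~(A) has $\qepsilon_i(x') = 0$ and $\qepsilon_i(x) > 0$, so $Y = \qe_i(x) \qtens x'$; subcase~(B) has $\qphi_i(x) = 0$ and $\qepsilon_i(x') > 0$, so $Y = x \qtens \qe_i(x')$. In each subcase exactly one tensor factor moves under $\qe_i$ (the $\Q$-factor in (A), the $\Q'$-factor in (B)); the other is unchanged. Thus every quantity $\qepsilon_j$, $\qphi_j$, $\langle \wt(\cdot), \alpha_j\rangle$ on the unchanged factor is identical for $X$ and $Y$, and the task reduces to applying \textbf{LQ2} on the moved factor (in $\Q$ or $\Q'$) and then running the result through the tensor-product formulas for $\qepsilon_j$ and $\qphi_j$ in Definition~\ref{def:qtens}.

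For part~(1), where $|i-j|>1$, the orthogonality $\langle \alpha_i, \alpha_j\rangle = 0$ of type $A_{n-1}$ gives $\langle \wt(\qe_i(z)), \alpha_j\rangle = \langle \wt(z), \alpha_j\rangle$ on the moved factor. Combined with \textbf{LQ2}(1) on that factor and Lemma~\ref{lem:lem_ij}(1), both $\qepsilon_j$ and $\qphi_j$ are invariant under the $\qe_i$-step there, so the $\max$-formulas in Definition~\ref{def:qtens} yield $\qepsilon_j(X) = \qepsilon_j(Y)$ at once.

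For part~(2), with $j = i+1$, the strategy is to apply \textbf{LQ2}(2), as refined by Proposition~\ref{prop:local_ax}(2), on the moved factor: this produces one of three possible transitions for $(\qepsilon_{i+1}, \qphi_{i+1})$ of that factor (invariance with $\qphi_{i+1}$ dropping by $1$; persistent $+\infty$; or a collapse from $+\infty$ to the finite positive value $-\langle \wt(y), \alpha_{i+1}\rangle$ with $\qphi_{i+1}$ becoming $0$). For each of these sub-behaviours and each of the subcases (A) and (B), I compute $\qepsilon_{i+1}(X)$ and $\qepsilon_{i+1}(Y)$ from Definition~\ref{def:qtens} and compare. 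Lemma~\ref{lem:tens_inf} is used to pinpoint which factor supplies a $+\infty$; axioms \textbf{LQ1} and \textbf{Q2} on $\Q$ and $\Q'$ are used to discard inconsistent configurations; and in subcase~(B) the extra hypothesis $\qphi_i(x) = 0$ pins down $\qepsilon_i(x) = -\langle \wt(x), \alpha_i\rangle$ via \textbf{Q2}. The combined bookkeeping delivers both the biconditional of \textbf{LQ2}(2) and the additional conclusion $\qepsilon_{i+1}(Y) \neq 0$.

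Part~(3) is the dual of part~(2), obtained by replacing $\qe_i$, $\qepsilon_{i+1}$ with $\qf_i$, $\qphi_{i-1}$, invoking \textbf{LQ2}(3) and Proposition~\ref{prop:local_ax}(3), and repeating the same case analysis. The principal obstacle is the $+\infty$-case in part~(2): when $\qepsilon_{i+1}(X) = +\infty$, one must carefully trace (via Lemma~\ref{lem:tens_inf}) which tensor factor is the source of the infinity, and then combine Proposition~\ref{prop:local_ax}(2)(c) with the tensor formula to show that $\qepsilon_{i+1}(Y)$ either remains $+\infty$ (precisely when $\qepsilon_i(Y) > 0$) or drops to a strictly positive finite value (precisely when $\qepsilon_i(Y) = 0$), but never lands on $0$.
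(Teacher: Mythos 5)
Your proposal is correct and follows essentially the same route as the paper's proof: reduce to the factor that moves under $\qe_i$ via Proposition~\ref{prop:ricardo}, apply the factor-level axiom \textbf{LQ2} (in the form of Proposition~\ref{prop:local_ax}) together with \textbf{LQ1}, \textbf{Q2} and Lemma~\ref{lem:tens_inf}, and push the result through the quasi-tensor formulas, treating the $\qepsilon_{i+1}(x\qtens x')=+\infty$ case by tracing which factor carries the infinity. The remaining work is exactly the case-by-case bookkeeping you describe, which is what the paper's proof carries out in detail.
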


\begin{proof}
Suppose that $\qe_i(x \qtens x') = y \qtens y'$. Then, by Definition \ref{def:qtens}, we have $\qphi_i (x) =0$ or $\qepsilon_i (x')=0$.

\noindent
\textbf{We show that $\Q \qtens \Q'$ satisfies LQ2} (1). Let $i, j \in I$ be such that $|i-j| > 1$. Then, by Proposition \ref{prop:ricardo}, we have two cases. We prove the case where $\qepsilon_i (x') = 0$. The case where $\qepsilon_i(x') > 0$ is proved similarly.
Suppose that $\qepsilon_i(x') = 0$. Then, $\qe_i(x \qtens x') = \qe_i (x) \qtens x' = y \qtens y'$, and we have
$$
\qe_i(x) = y, \quad x' = y'.
$$
If $\qphi_j (y), \qepsilon_j(y') > 0$, then, as $\qe_i(x) = y$, we have $\qphi_j (x) = \qphi_j (y)$ by Proposition \ref{prop:local_ax}, and since $x' = y'$, we have $\qepsilon_j (x') = \qepsilon_j (y')$.
Then, by Definition \ref{def:qtens}, we have $\qepsilon_j (y \qtens y') = \qepsilon_j(x \qtens x') = + \infty$.
Otherwise, suppose that $\qphi_j (y) = 0$ or $\qepsilon_j (y') = 0$. Then, by axiom \textbf{LQ1}, we have $\qepsilon_j (x) = \qepsilon_j (y)$, and since $x'=y'$, we have $\qepsilon_j (x') = \qepsilon_j (y')$. Thus, by Proposition \ref{prop:ricardo}, we have
$$
\qepsilon_j (y \qtens y') = \qepsilon_j (y) + \qepsilon_j (y') = \qepsilon_j (x) + \qepsilon_j (x') = \qepsilon_j(x \qtens x').
$$

\noindent
\textbf{We show that $\Q \qtens \Q'$ satisfies LQ2} (2). The proof for \textbf{LQ2} (3) is done similarly.

\begin{enumerate}
\item We first suppose that $\qepsilon_{i+1} (x \qtens x') \neq + \infty$, and we prove that $\qepsilon_{i+1} (x \qtens x') = \qepsilon_{i+1} (y \qtens y')$. By Definition \ref{def:qtens}, we have $\qphi_{i+1} (x) =0$ or $\qepsilon_{i+1} (x') = 0$, and by Proposition \ref{prop:ricardo}, we have $\qepsilon_{i+1} (x \qtens x') = \qepsilon_{i+1}(x) + \qepsilon_{i+1}(x')$. In particular, $\qepsilon_{i+1}(x)$ and $\qepsilon_{i+1}(x')$ must be finite. Moreover, since $\qe_i(x \qtens x') = y \qtens y'$, we have $\qphi_i(x) = 0$ or $\qepsilon_i (x')=0$. Following Proposition \ref{prop:ricardo}, we have two cases to consider:

\noindent
\textbf{Case 1.1.} Suppose that $\qepsilon_i (x') = 0$. Then,
$$
\qe_i(x) = y, \quad x' = y'.
$$
Since $\qepsilon_{i+1}(x)$ is finite, Proposition \ref{prop:local_ax} implies that
\begin{equation}\label{eq:2_2_eps_phs}
\qepsilon_{i+1}(y) = \qepsilon_{i+1}(x), \quad \qphi_{i+1}(y) = \qphi_{i+1}(x)-1.
\end{equation}
Therefore, since $\Q$ is seminormal, we have $\qphi_{i+1}(x)>0$, and consequently, $\qepsilon_{i+1} (x') = 0$. Since $x' = y'$, we have
\begin{equation}\label{eq:2_2_zero}
\qepsilon_{i+1} (x') = \qepsilon_{i+1}(y') = 0.
\end{equation}
Then, by Proposition \ref{prop:ricardo}, and equations \eqref{eq:2_2_eps_phs} and \eqref{eq:2_2_zero}, we have
\begin{align*}
\qepsilon_{i+1}(y \qtens y') &= \qepsilon_{i+1} (y) + \qepsilon_{i+1}(y') \\
&=\qepsilon_{i+1} (x) + \qepsilon_{i+1}(x') \\
&=\qepsilon_{i+1} (x \qtens x')
\end{align*}

\noindent
\textbf{Case 1.2.} Now suppose that $\qepsilon_i(x') > 0$. Then, we have $\qphi_i(x) = 0$ and 
$$
x =y, \quad \qe_i(x')=y'.
$$
Since $\qepsilon_{i+1}(x')$ is finite, by axiom \textbf{LQ2} (2) we have
\begin{equation}\label{eq:2_2_eps_p}
\qepsilon_{i+1}(x') = \qepsilon_{i+1}(y').
\end{equation}
If $\qphi_{i+1}(x)=0$, since $x=y$, we get $\qphi_{i+1}(y)=0$. Otherwise, we must have $\qepsilon_{i+1}(x')=0$, and by \eqref{eq:2_2_eps_p}, we have $\qepsilon_{i+1}(y') = 0$. Thus, we have $\qphi_{i+1}(y)=0$ or $\qepsilon_{i+1}(y') = 0$, and by Proposition \ref{prop:ricardo},
\begin{align*}
\qepsilon_{i+1}(y \qtens y') &= \qepsilon_{i+1} (y) + \qepsilon_{i+1}(y')\\
&=\qepsilon_{i+1} (x) + \qepsilon_{i+1}(x') \\
&=\qepsilon_{i+1} (x \qtens x')
\end{align*}

\item Now suppose that $\qepsilon_{i+1} (x \qtens x') = +\infty$. By Lemma \ref{lem:tens_inf}, we have $\qphi_{i+1}(x)>0$ or $\qepsilon_{i+1}(x')>0$.
We will show that $\qepsilon_{i+1}(x \qtens x') \neq \qepsilon_{i+1}(y \qtens y')$ if, and only if, $\qepsilon_{i} (y \qtens y') = 0$.

Suppose that $\qepsilon_i(y \qtens y') = 0$. By Proposition \ref{prop:LQ1_qtens}, we have $\qphi_{i+1} (y \qtens y') = 0 \neq + \infty$. Therefore, by \textbf{Q2}, $\qepsilon_{i+1} (y \qtens y') \neq +\infty$ and hence $\qepsilon_{i+1}(y \qtens y') \neq \qepsilon_{i+1}(x \qtens x') = + \infty$.

Now suppose that $\qepsilon_{i+1}(x \qtens x') \neq \qepsilon_{i+1}(y \qtens y')$. Then, $\qepsilon_{i+1}(y \qtens y') \neq + \infty$, which implies, in particular, that $\qepsilon_{i+1}(y)$ and $\qepsilon_{i+1}(y')$ are finite, and thus, $\qphi_{i+1}(y) = 0$ or $\qepsilon_{i+1}(y') = 0$. By Proposition \ref{prop:ricardo}, we have two cases to consider.

\noindent
\textbf{Case 2.1.} Suppose that $\qepsilon_{i}(x') = 0$. Then,
$$
\qe_i(x)=y, \quad x'=y'.
$$
This implies that $\qepsilon_i(y') = \qepsilon_i(x') =0$, and thus, by Proposition \ref{prop:ricardo}, we have
\begin{equation}\label{eq:2_2_y1}
\qepsilon_i(y \qtens y') = \qepsilon_i (y)+\qepsilon_i(y') = \qepsilon_i (y).
\end{equation}
If $\qphi_{i+1}(y)=0$, then axiom \textbf{LQ1} implies that $\qepsilon_i (y) = 0$. Otherwise, if $\qphi_{i+1}(y) >0$, we must have $\qepsilon_{i+1} (y') = 0$. Since $x'=y'$, we have $\qepsilon_{i+1}(x') = 0$, and by Proposition \ref{prop:ricardo},
$$\qepsilon_{i+1} (x \qtens x') = \qepsilon_{i+1}(x) + \qepsilon_{i+1} (x') = \qepsilon_{i+1} (x) = +\infty.$$
Since $\qepsilon_{i+1}(y) \neq + \infty$, we have $\qepsilon_{i+1}(x) \neq \qepsilon_{i+1}(y)$, and then, axiom \textbf{LQ2} (2) implies that $\qepsilon_i(y)=0$. In either case we get $\qepsilon_i(y) = 0$, and thus, by \eqref{eq:2_2_y1}, $\qepsilon_{i}(y \qtens y') = 0$.

\noindent
\textbf{Case 2.2.} Now suppose that $\qepsilon_i(x') > 0$. Then, we have $\qphi_i(x) = 0$ and
$$
x=y, \quad \qe_i(x')=y'.
$$
Since $\qepsilon_{i+1}(y \qtens y') \neq +\infty$, we have $\qphi_{i+1}(y) = 0$ or $\qepsilon_{i+1}(y')=0$. We claim that $\qepsilon_{i+1}(y') > 0$. If $\qepsilon_{i+1}(y') = 0$, then, as $\qe_i (x') = y'$, axiom \textbf{LQ2} (2) implies that $\qepsilon_{i+1}(x') = \qepsilon_{i+1}(y') = 0$, and by Proposition \ref{prop:ricardo},
\[
\qepsilon_{i+1} (x) = \qepsilon_{i+1}(x) + \qepsilon_{i+1}(x') = \qepsilon_{i+1}(x \qtens x') = + \infty.
\]
But since $x=y$, we have $\qepsilon_{i+1}(y) = + \infty$, and thus $\qepsilon_{i+1}(y \qtens y') = + \infty$, which contradicts the hypothesis that $\qepsilon_{i+1}(y \qtens y') \neq + \infty$. Thus, we have $\qepsilon_{i+1}(x') > 0$ and consequently, $\qphi_{i+1}(y) = 0$. Axiom \textbf{LQ1} then implies that $\qepsilon_i(y) = 0$. Since $\qphi_i(x) = 0$ and $x = y$, we get $\qphi_i (y) = 0$. Thus, by Proposition \ref{prop:ricardo}, we have
\begin{equation}\label{eq:2_2_y2}
\qepsilon_{i}(y \qtens y') = \qepsilon_{i}(y) + \qepsilon_{i}(y') = \qepsilon_{i} (y').
\end{equation} 
Moreover, since $\qphi_{i+1}(y) = 0$ and $x = y$, we have $\qphi_{i+1}(x) = 0$, and by Proposition \ref{prop:ricardo},
\begin{equation}\label{eq:2_2_x_inf}
\qepsilon_{i+1} (x) + \qepsilon_{i+1}(x') = \qepsilon_{i+1} (x \qtens x') = +\infty,
\end{equation}
which implies that $\qepsilon_{i+1}(x) = + \infty$ or $\qepsilon_{i+1} (x') = + \infty$. Since $\qphi_{i+1}(x) = 0$ (in particular, it is finite), we get that $\qepsilon_{i+1}(x)$ is finite. Thus, \eqref{eq:2_2_x_inf} implies that $\qepsilon_{i+1}(x') = + \infty$. Since $\qepsilon_{i+1}(y') \neq + \infty$, we have $\qepsilon_{i+1} (x') \neq \qepsilon_{i+1}(y')$, and thus, by axiom \textbf{LQ2} (2), we have $\qepsilon_i(y') = 0$. Therefore, by \eqref{eq:2_2_y2}, we have $\qepsilon_{i}(y \qtens y') = 0$.

\item Finally, we show that $\qepsilon_{i+1}(y \qtens y') = 0$ implies that $\qepsilon_{i+1}(x \qtens x') = \qepsilon_{i+1}(y \qtens y')$. If $\qepsilon_{i+1}(y \qtens y') = 0$, then, in particular, $\qepsilon_{i+1}(y \qtens y')$ is finite, and thus $\qphi_{i+1}(y)=0$ or $\qepsilon_{i+1}(y')=0$. Thus, by Proposition \ref{prop:ricardo} we have
\[
\qepsilon_{i+1}(y) + \qepsilon_{i+1}(y') = \qepsilon_{i+1} (y \qtens y') = 0.
\]
This implies, since $\Q$ and $\Q'$ are seminormal, that $\qepsilon_{i+1} (y) = \qepsilon_{i+1} (y') = 0$. If $\qepsilon_i(x')=0$, we get $\qe_i(x)=y$ and $\quad x'=y'$.
Since $\qepsilon_{i+1}(y) = 0$, axiom \textbf{LQ2} (2) implies that $\qepsilon_{i+1}(x) = \qepsilon_{i+1}(y) = 0$. And since $x' = y'$, we have $\qepsilon_{i+1}(x') = \qepsilon_{i+1}(y') = 0$. Therefore, by Proposition \ref{prop:ricardo}, we have
\[
\qepsilon_{i+1} (x \qtens x') = \qepsilon_{i+1} (x) + \qepsilon_{i+1} (x') = \qepsilon_{i+1} (y) + \qepsilon_{i+1} (y') = \qepsilon_{i+1} (y \qtens y').
\]
If $\qepsilon_i(x')>0$, then $x=y$ and $\qe_i(x') = y'$. Since $\qepsilon_{i+1}(y')=0$, as before, axiom \textbf{LQ2} (2) implies that $\qepsilon_{i+1} (x') = \qepsilon_{i+1} (y') = 0$, and as $x=y$, we get $\qepsilon_{i+1} (x)=\qepsilon_{i+1}(y)=0$. Reasoning as before, we conclude that $\qepsilon_{i+1} (x \qtens x') = \qepsilon_{i+1}(y \qtens y')$.
\qedhere
\end{enumerate}
\end{proof}

\begin{prop}
$\Q \qtens \Q'$ satisfies \emph{\textbf{LQ3}} and \emph{\textbf{LQ3$'$}}.
\end{prop}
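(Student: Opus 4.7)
Suppose that $\qe_i(z)$ and $\qe_j(z)$ are both defined, where $z = x \qtens x'$ and $i \neq j$. Since we are then not in the infinity case of Definition~\ref{def:qtens} for either index, Proposition~\ref{prop:ricardo} applies for each $k \in \{i,j\}$ and gives $\qe_k(z) = \qe_k(x) \qtens x'$ when $\qepsilon_k(x') = 0$ (forcing $\qe_k(x) \neq \bot$, whence $\qepsilon_k(x) > 0$ by seminormality), and $\qe_k(z) = x \qtens \qe_k(x')$ when $\qepsilon_k(x') > 0$ (forcing $\qphi_k(x) = 0$ and $\qe_k(x') \neq \bot$). The plan is to case-split on which factor each operator acts on.

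If both operators act on the same factor, say on $x$, then $\qepsilon_i(x') = \qepsilon_j(x') = 0$ and both $\qe_i(x)$ and $\qe_j(x)$ are defined in $\Q$, so axiom \textbf{LQ3} in $\Q$ yields $\qe_i \qe_j(x) = \qe_j \qe_i(x) \neq \bot$; since $x'$ is untouched, a second invocation of Proposition~\ref{prop:ricardo} gives $\qe_i \qe_j(z) = \qe_j \qe_i(z) = \qe_i\qe_j(x) \qtens x'$, and the symmetric subcase acting on $x'$ is handled by \textbf{LQ3} in $\Q'$. The main obstacle is the opposite-side subcase: after relabelling, assume $\qe_i$ acts on $x$ (so $\qepsilon_i(x') = 0$ and $\qepsilon_i(x) > 0$) while $\qe_j$ acts on $x'$ (so $\qphi_j(x) = 0$ and $\qepsilon_j(x') > 0$). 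I would show that $\qphi_j(\qe_i(x)) = 0$ and $\qepsilon_i(\qe_j(x')) = 0$, so that Proposition~\ref{prop:ricardo} assembles both orders as $\qe_i(x) \qtens \qe_j(x')$. For $|i-j| > 1$ this is immediate from Proposition~\ref{prop:local_ax}~(1). The subcase $j = i+1$ cannot occur, since axiom \textbf{LQ1} in $\Q$ forces $\qphi_{i+1}(x) = 0 \Rightarrow \qepsilon_i(x) = 0$, contradicting $\qepsilon_i(x) > 0$. The subcase $j = i-1$ is the delicate one: applying Proposition~\ref{prop:local_ax}~(3) in $\Q$ with $\qe_i(x) = y$ and $\qphi_{i-1}(x) = 0$ excludes branches (b) and (c) (both would force $\qphi_{i-1}(x) \neq 0$), leaving branch (a) which yields $\qphi_{i-1}(y) = \qphi_{i-1}(x) = 0$; dually, Proposition~\ref{prop:local_ax}~(2)(a) applied in $\Q'$ with $\qe_{i-1}(x') = w'$ gives $\qepsilon_i(w') = \qepsilon_i(x') = 0$, since $\qepsilon_i(x') = 0 \neq + \infty$.

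Axiom \textbf{LQ3$'$} is proved by the dual argument: the piecewise formula for $\qf_k$ in Definition~\ref{def:qtens}, together with Proposition~\ref{prop:ricardo}, shows that $\qf_k$ acts on the left factor when $\qphi_k(x) > 0$ (forcing $\qepsilon_k(x') = 0$) and on the right when $\qphi_k(x) = 0$. Using the identification $\qf_k(x) = y \Leftrightarrow \qe_k(y) = x$, the transitions of $\qepsilon$ and $\qphi$ across a $\qf_k$-edge are again governed by Proposition~\ref{prop:local_ax}, and axiom \textbf{LQ1} rules out the symmetric pathological subcase (now with $j = i-1$ forbidden and $j = i+1$ handled by case~(2)(a) of the proposition). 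The essential difficulty throughout is the side-bookkeeping in the opposite-side subcase: applying one operator could in principle push the pair into the loop region or reassign the side of the other, and it is precisely Proposition~\ref{prop:local_ax} together with axiom \textbf{LQ1} that prevent either from occurring.
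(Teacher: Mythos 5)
Your treatment of \textbf{LQ3} is correct and is essentially the paper's own proof: the same case split according to which factor each operator acts on (via Proposition~\ref{prop:ricardo}), the same exclusion of $j=i+1$ in the opposite-side case (the paper gets the contradiction from the loop condition in Definition~\ref{def:qtens}, you from the contrapositive of \textbf{LQ1}; these are equivalent), and the same handling of $j=i-1$ through the transition rules of Proposition~\ref{prop:local_ax} before reassembling with Proposition~\ref{prop:ricardo}. (The paper in fact only writes out \textbf{LQ3} and leaves \textbf{LQ3$'$} implicit.)

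Your sketch of \textbf{LQ3$'$}, however, has the two adjacent cases interchanged. Keeping your convention from \textbf{LQ3} ($\qf_i$ acts on the left factor, so $\qphi_i(x)>0$ and, to avoid the loop case, $\qepsilon_i(x')=0$; $\qf_j$ acts on the right factor, so $\qphi_j(x)=0$ and $\qphi_j(x')>0$), it is $j=i+1$ that cannot occur: \textbf{LQ1} applied to $x'$ gives $\qepsilon_i(x')=0\Rightarrow\qphi_{i+1}(x')=0$, contradicting $\qf_{i+1}(x')\neq\bot$. The case $j=i-1$, which you declare forbidden, genuinely occurs: in $\mathcal{B}_3\qtens\mathcal{B}_3$ take $x\qtens x'=2\qtens 1$ with $i=2$, $j=1$; then $\qf_2(2\qtens 1)=3\qtens 1$ and $\qf_1(2\qtens 1)=2\qtens 2$ are both defined. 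So, as written, your plan would spend the transition-rule argument on an empty case and leave exactly the delicate case of \textbf{LQ3$'$} unproven. The repair is the precise dual of your $j=i-1$ argument for \textbf{LQ3}: writing $\qf_i(x)=w$ (so $\qe_i(w)=x$), Proposition~\ref{prop:local_ax}~(3)(a) with $\qphi_{i-1}(x)=0\neq+\infty$ gives $\qphi_{i-1}(w)=0$; writing $\qf_{i-1}(x')=v'$ (so $\qe_{i-1}(v')=x'$), the branch analysis of Proposition~\ref{prop:local_ax}~(2) with $\qepsilon_i(x')=0$ excludes (b) and (c) and gives $\qepsilon_i(v')=0$; Proposition~\ref{prop:ricardo} then yields $\qf_{i-1}\qf_i(x\qtens x')=w\qtens v'=\qf_i\qf_{i-1}(x\qtens x')\neq\bot$.
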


\begin{proof}
\textbf{We show that $\Q \qtens \Q'$ satisfies LQ3.} Let $i, j \in I$, with $i \neq j$, and suppose that $\qe_i(x \qtens x')$ and $\qe_j(x \qtens x')$ are both defined. 
By Proposition \ref{prop:ricardo}, we have four cases to consider.

\noindent
\textbf{Case 1.} Suppose that $\qepsilon_i(x') = 0$ and $\qepsilon_j(x') = 0$. Then, by Proposition \ref{prop:ricardo}, we have 
\begin{equation}\label{eq:3_1_qt}
\qe_i(x \qtens x') = \qe_i (x) \qtens x', \quad \qe_j(x \qtens x') = \qe_j (x) \qtens x'.
\end{equation}

Since $\qe_i(x) = y$ and $\qe_j(x)= z$ are both defined, by axiom \textbf{LQ3} we have
\begin{equation}\label{eq:3_1_LQ}
\qe_i \qe_j (x) = \qe_j \qe_i (x) \neq \bot.
\end{equation}

Then, since $\qepsilon_i (x') = \qepsilon_j(x') = 0$, we have
\begin{align*}
\qe_i \qe_j (x \qtens x') &= \qe_i (\qe_j(x) \qtens x') &\text{(by \eqref{eq:3_1_qt})}\\
&= \qe_i \qe_j (x) \qtens x' &\text{(by Proposition \ref{prop:ricardo})}\\
&= \qe_j \qe_i (x) \qtens x' &\text{(by \eqref{eq:3_1_LQ})}\\
&= \qe_j (\qe_i(x) \qtens x') &\text{(by Proposition \ref{prop:ricardo})}\\
&= \qe_j \qe_i (x \qtens x'). &\text{(by \eqref{eq:3_1_qt})}
\end{align*}

\noindent
\textbf{Case 2.} Suppose that $\qepsilon_i (x') =0$ and $\qepsilon_j(x') > 0$. Then, we have $\qphi_j(x)=0$ and
\begin{equation}\label{eq:3_2_qt}
\qe_i (x \qtens x') = \qe_i (x) \qtens x', \quad \qe_j(x \qtens x') = x \qtens \qe_j(x').
\end{equation}
We claim that $j \neq i+1$. If $j = i + 1$, then, since $\qe_i(x)$ is defined, we have $\qepsilon_i(x)>0$, and axiom \textbf{LQ1} implies that $\qphi_{i+1}(x)>0$. And since $\qe_j (x') = \qe_{i+1} (x')$ is defined, we have $\qepsilon_{i+1} (x')>0$. Thus, by Definition \ref{def:qtens}, we have $\qe_j(x \qtens x') = \qe_{i+1} (x \qtens x')=\bot$, which contradicts the hypothesis. Thus, $j \neq i+1$, and we have two cases to consider.
\begin{description}
\item[Case 2.1] Suppose that $|i-j|>1$. By axiom \textbf{LQ2} (1), we have $\qepsilon_i(\qe_j (x')) = \qepsilon_i (x') = 0$. Thus, by Proposition \ref{prop:ricardo},
\begin{equation}\label{eq:3_2_ij}
\qe_i \qe_j (x \qtens x') = \qe_i (x \qtens \qe_j(x')) = \qe_i (x) \qtens \qe_j (x'). 
\end{equation}
Axiom \textbf{LQ2} (1) also implies that $\qepsilon_j (\qe_i (x)) = \qepsilon_j (x) > 0$, and therefore, by Proposition \ref{prop:ricardo},
\begin{equation}\label{eq:3_2_ji}
\qe_j \qe_i (x \qtens x') = \qe_j (\qe_i (x) \qtens x') = \qe_i (x) \qtens \qe_j (x').
\end{equation}
From \eqref{eq:3_2_ij} and \eqref{eq:3_2_ji}, we get $\qe_i \qe_j (x \qtens x') = \qe_j \qe_i (x \qtens x')$.

\item[Case 2.2] Now suppose that $j=i-1$.
We claim that $\qphi_{i-1}(\qe_i (x))$ is finite. If $\qphi_{i-1} (\qe_i(x)) = + \infty$, then, as $\qphi_{i-1} (x) = \qphi_j (x) = 0$, we would have $\qphi_{i-1} (\qe_i (x)) \neq \qphi_{i-1} (x)$. Thus, by axiom \textbf{LQ2} (3), we would have $\qphi_{i-1}(x) > 0$, which is a contradiction. Therefore, we have $\qphi_{i-1}(\qe_i(x)) \neq + \infty$, and then, axiom \textbf{LQ2} (3) implies that $\qphi_{i-1} (\qe_i (x)) = \qphi_{i-1} (x) = 0$. Consequently, $\qe_{i-1} (\qe_i (x) \qtens x')$ is defined. Moreover, $\qepsilon_j (x') = \qepsilon_{i-1}(x') > 0$, and thus Proposition \ref{prop:ricardo} implies that
\begin{equation}\label{eq:3_2_i1_i}
\qe_{i-1} \qe_i (x \qtens x') = \qe_{i-1} (\qe_i (x) \qtens x') = \qe_i (x) \qtens \qe_{i-1} (x'). 
\end{equation}
Since $\qepsilon_i(x') = 0$, in particular, $\qepsilon_i (x') \neq + \infty$. Thus, by axiom \textbf{LQ2} (2), we have $\qepsilon_i (\qe_{i-1}(x')) = \qepsilon_i (x') = 0$, and by Proposition \ref{prop:ricardo},
\begin{equation}\label{eq:3_2_i_i1}
\qe_i \qe_{i-1} (x \qtens x') = \qe_i (x \qtens \qe_{i-1} (x')) = \qe_i (x) \qtens \qe_{i-1} (x'). 
\end{equation}
From \eqref{eq:3_2_i1_i} and \eqref{eq:3_2_i_i1}, we get $\qe_{i-1} \qe_i (x \qtens x') = \qe_i \qe_{i-1} (x \qtens x').$
\end{description}

\noindent
\textbf{Case 3.} Suppose that $\qepsilon_i (x') > 0$ and $\qepsilon_j (x') = 0$. This is similar to the previous case, except now we have $j \neq i-1$.

\noindent
\textbf{Case 4.} Suppose that $\qepsilon_i (x') > 0$ and $\qepsilon_j(x')>0$. Then, we have $\qphi_i (x) = \qphi_j (x) = 0$, and
\begin{equation}\label{eq:3_4_qt}
\qe_i (x \qtens x') = x \qtens \qe_i(x'), \quad \qe_j (x \qtens x') = x \qtens \qe_j(x').
\end{equation}
From \eqref{eq:3_4_qt}, we get that $\qepsilon_i (\qe_j (x')) >0$ and $\qepsilon_j (\qe_i (x'))>0$. Since $\qe_i (x')$ and $\qe_j (x')$ are both defined, we have $\qe_i \qe_j (x') = \qe_j \qe_i (x') \neq \bot$, by axiom \textbf{LQ3}. And since $\qphi_i (x) = \qphi_j (x) = 0$, by Proposition \ref{prop:ricardo}, we have
\begin{align*}
\qe_i \qe_j (x \qtens x') &= \qe_i (x \qtens \qe_j (x'))\\
&= x \qtens \qe_i \qe_j (x')\\
&= x \qtens \qe_j \qe_i (x')\\
&= \qe_j (x \qtens \qe_i (x'))\\
&= \qe_j \qe_i (x \qtens x'). \qedhere
\end{align*}
\end{proof}

\section{Stembridge crystals and quasi-crystals}\label{sec:stem_crystals_quasi}

In what follows we consider $\C$ to be a crystal graph of type $A_{n-1}$.

\begin{obs}\label{rmk:wt_lattice}
We consider the weight lattice of $\C$ to be $\mathbb{Z}^n$ modulo $\mathbf{e_1} + \cdots + \mathbf{e_n}$. Therefore, we may fix a representative such that highest weights are always partitions, followed by a possible sequence of zeros. This implies that the entries of $\wt(x)$ are non-negative, for any $x \in \C$.
\end{obs}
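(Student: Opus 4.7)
Plan. The remark packages two assertions requiring justification: first, that representatives of highest weights can always be chosen to be partitions with possibly trailing zeros; second, that under this convention every weight $\wt(x)$ for $x \in \C$ has non-negative entries. The underlying identification of the weight lattice with $\mathbb{Z}^n / \mathbb{Z}(\mathbf{e_1}+\cdots+\mathbf{e_n})$ is a standard fact for type $A_{n-1}$, since the simple roots $\alpha_i = \mathbf{e_i} - \mathbf{e_{i+1}}$ generate the zero-sum sublattice of $\mathbb{Z}^n$, making its quotient the natural ambient lattice and giving us the freedom to shift representatives by integer multiples of $\mathbf{e_1}+\cdots+\mathbf{e_n}$.

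For the first assertion, let $u$ be a highest weight element of a connected component of $\C$. Since $\C$ is seminormal and $\cepsilon_i(u)=0$ for every $i\in I$, axiom \textbf{C2} gives
\[
\cphi_i(u) = \langle \wt(u), \alpha_i^{\vee}\rangle = \wt(u)_i - \wt(u)_{i+1},
\]
and $\cphi_i(u) \geq 0$ by seminormality, so any representative of $\wt(u)$ in $\mathbb{Z}^n$ has weakly decreasing coordinates. Adding an appropriate integer multiple of $\mathbf{e_1}+\cdots+\mathbf{e_n}$ forces $\wt(u)_n = 0$, and combined with the weakly decreasing property this turns $\wt(u)$ into a partition with possibly trailing zeros.

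For the second assertion, the plan is to appeal to Stembridge's classification: every connected Stembridge crystal of type $A_{n-1}$ is isomorphic to the crystal $B(\lambda)$ of semistandard Young tableaux of shape $\lambda$ with entries in $\{1,\ldots,n\}$, where $\lambda$ is the normalized highest weight. Under this isomorphism, the weight of $T \in B(\lambda)$ is its content vector $(c_1,\ldots,c_n)$, where $c_i$ counts the cells of $T$ labeled $i$, and this is manifestly non-negative. Transporting along the isomorphism yields $\wt(x) \in \mathbb{Z}_{\geq 0}^n$ for every $x \in \C$.

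The main obstacle is this last step, because non-negativity is not a formal consequence of the crystal axioms on their own: applying successive lowering operators subtracts simple roots $\alpha_i = \mathbf{e_i} - \mathbf{e_{i+1}}$, and on purely weight-theoretic grounds could push an entry negative. What rules this out is precisely the Stembridge structure of $\C$, captured through the tableau model. An alternative direct proof by induction on the rank of $x$, tracking the $\cphi_i$ and $\cepsilon_i$ values via axiom \textbf{C2} and seminormality, is possible but significantly more technical than invoking the classification.
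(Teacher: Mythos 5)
The paper gives no argument for this remark at all --- it is presented as a choice of convention followed by an unproved assertion --- so there is nothing to match your proof against line by line; judged on its own, your justification is sound and supplies exactly what the remark leaves implicit. Your first half is the natural argument: at a highest weight element $u$, seminormality gives $\cepsilon_i(u)=0$, so \textbf{C2} yields $\cphi_i(u)=\wt_i(u)-\wt_{i+1}(u)\geq 0$, hence any representative of $\wt(u)$ is weakly decreasing, and shifting by a multiple of $\mathbf{e_1}+\cdots+\mathbf{e_n}$ (legitimate precisely because the weight lattice is the stated quotient) makes it a partition with trailing zeros; note only that this normalization is made per connected component, which is harmless in the paper because the relevant components have a unique highest weight element. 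Your second half correctly isolates the real content: non-negativity of all $\wt(x)$ is not a formal consequence of the crystal axioms, and invoking Stembridge's classification ($\C\cong B(\lambda)$, weights being tableau contents) is a legitimate route and almost certainly the justification the authors had in mind, since from Definition \ref{def:crystal_quasi} onward $\C$ is a connected Stembridge crystal. The one point worth flagging is that Lemma \ref{lem:leq_wt} is stated for merely seminormal crystals and cites this remark, a setting where the classification is unavailable; there your parenthetical ``direct induction'' is what is actually needed, and it does go through: taking a counterexample of minimal depth, if $\cf_i(y)$ had negative $i$-th entry then $\wt_i(y)=0$ and $\cphi_i(y)>0$, and since seminormality together with \textbf{C1}--\textbf{C2} forces each $i$-string to have length $\langle\mu,\alpha_i\rangle$ where $\mu$ is the weight at its top, the top of the $i$-string through $y$ has $(i+1)$-entry equal to $-\cphi_i(y)<0$ at strictly smaller depth, a contradiction (this argument needs all highest weight elements of the component to have non-negative weight, which holds in the unique-highest-weight situation where the paper applies the lemma). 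So your proposal is correct for every use the paper makes of the remark; the alternative induction is only required if one insists on the broader seminormal phrasing under which Lemma \ref{lem:leq_wt} is stated.
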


We now state some auxiliary results.
\begin{lem}\label{lem:ax_s1}
Let $\C$ be a Stembridge crystal, and suppose that $\ce_i(x)=y \in \C$. Let $j\neq i$. Then,
\begin{enumerate}
\item $\cepsilon_j (y) \geq \cepsilon_j (x)$,
\item $\cphi_j (y) \leq \cphi_j (x)$.
\end{enumerate}
\end{lem}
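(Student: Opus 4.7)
The plan is to derive this directly from the already-stated \textbf{S1} axiom together with Proposition \ref{prop:stem_prop}, which together completely pin down how $\cepsilon_j$ and $\cphi_j$ change under the application of $\ce_i$ in a Stembridge crystal.

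For part (1), axiom \textbf{S1} itself is essentially the statement: it asserts that $\cepsilon_j(y)$ equals either $\cepsilon_j(x)$ or $\cepsilon_j(x)+1$, so in either case $\cepsilon_j(y) \geq \cepsilon_j(x)$. No further work is needed here; the inequality is immediate.

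For part (2), I would invoke Proposition \ref{prop:stem_prop} and simply inspect its three exhaustive cases. In cases (1) and (2) (when $\langle \alpha_i, \alpha_j\rangle = -1$) one has either $\cphi_j(y)=\cphi_j(x)-1$ or $\cphi_j(y)=\cphi_j(x)$, and in case (3) (when $\langle \alpha_i, \alpha_j\rangle = 0$) one has $\cphi_j(y)=\cphi_j(x)$. In every case $\cphi_j(y) \leq \cphi_j(x)$, as required.

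There is no real obstacle, since the lemma is essentially a bookkeeping consequence of previously recorded facts; the only thing to be careful about is that Proposition \ref{prop:stem_prop} requires only axiom \textbf{S1} (as stated in its hypothesis), which is part of the Stembridge package. One could alternatively deduce part (2) from part (1) by applying axiom \textbf{Q2} (i.e.\ \textbf{C2}): since $\wt(y) = \wt(x) + \alpha_i$ and $\langle \alpha_i, \alpha_j^{\vee}\rangle \in \{-1, 0\}$ for $j \neq i$ in a simply-laced root system, we get $\cphi_j(y) - \cepsilon_j(y) = \cphi_j(x) - \cepsilon_j(x) + \langle \alpha_i, \alpha_j^{\vee}\rangle \leq \cphi_j(x) - \cepsilon_j(x)$, which combined with $\cepsilon_j(y) \geq \cepsilon_j(x)$ yields $\cphi_j(y) \leq \cphi_j(x)$. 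This alternative has the minor advantage of not requiring the full classification of Proposition \ref{prop:stem_prop}, but the proposition-based approach is more transparent.
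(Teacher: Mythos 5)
Your main argument is correct and is essentially the paper's proof: the paper simply states that the lemma is a direct consequence of the Stembridge axioms, and your reading of \textbf{S1} for part (1) together with the three exhaustive cases of Proposition \ref{prop:stem_prop} for part (2) is exactly that verification spelled out.

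One caution about your ``alternative'' derivation of part (2): it is not valid as written. From $\cphi_j(y)-\cepsilon_j(y) \leq \cphi_j(x)-\cepsilon_j(x)$ and $\cepsilon_j(y) \geq \cepsilon_j(x)$ one cannot conclude $\cphi_j(y) \leq \cphi_j(x)$, since the second inequality bounds the discarded term from the wrong side; indeed $\cphi_j(y)-\cphi_j(x) = \bigl(\cepsilon_j(y)-\cepsilon_j(x)\bigr) + \langle \alpha_i, \alpha_j \rangle$, and if the increase $\cepsilon_j(y)=\cepsilon_j(x)+1$ could occur with $\langle \alpha_i, \alpha_j \rangle = 0$ one would get $\cphi_j(y)=\cphi_j(x)+1$. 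What saves the statement is precisely the case-linkage recorded in Proposition \ref{prop:stem_prop}: the $+1$ case for $\cepsilon_j$ occurs only when $\langle \alpha_i, \alpha_j \rangle = -1$, so the two contributions cancel. So the proposition-based argument is not merely ``more transparent''; it supplies information the bare inequalities do not.
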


\begin{proof}
This is a direct consequence of Stembridge axioms.
\end{proof}

\begin{lem}\label{lem:leq_wt}
Let $\C$ be a seminormal crystal. Then,
\begin{enumerate}
\item $\cepsilon_i (x) \leq \wt_{i+1} (x)$,
\item $\cphi_i (x) \leq \wt_i (x)$.
\end{enumerate}
\end{lem}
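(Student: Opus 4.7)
The plan is to use seminormality together with the non-negativity of weight coordinates guaranteed by Remark \ref{rmk:wt_lattice}. In type $A_{n-1}$ with $\alpha_i = e_i - e_{i+1}$, applying $\ce_i$ shifts $\wt_i$ up by $1$ and $\wt_{i+1}$ down by $1$, while applying $\cf_i$ does the reverse. So if we push $x$ as far as possible with $\ce_i$ (resp.\ $\cf_i$), the terminal weight must still have non-negative $(i{+}1)$-th (resp.\ $i$-th) coordinate, and this gives exactly the bound we want.

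More concretely, for (1), since $\C$ is seminormal, set $k := \cepsilon_i(x) = \max\{k : \ce_i^{\,k}(x) \neq \bot\}$, which is a non-negative integer (the crystal case excludes $+\infty$). Let $y := \ce_i^{\,k}(x) \in \C$. Iterating axiom \textbf{C1}, we get $\wt(y) = \wt(x) + k\alpha_i$, so that $\wt_{i+1}(y) = \wt_{i+1}(x) - \cepsilon_i(x)$. Since by Remark \ref{rmk:wt_lattice} we may choose the representative of $\wt(y)$ to have non-negative entries, $\wt_{i+1}(y) \geq 0$, and hence $\cepsilon_i(x) \leq \wt_{i+1}(x)$.

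The proof of (2) is dual: let $m := \cphi_i(x) = \max\{k : \cf_i^{\,k}(x) \neq \bot\}$ and $z := \cf_i^{\,m}(x)$. Then $\wt(z) = \wt(x) - m\alpha_i$, so $\wt_i(z) = \wt_i(x) - \cphi_i(x) \geq 0$, again by the non-negativity of the entries of $\wt(z)$, yielding $\cphi_i(x) \leq \wt_i(x)$.

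There is no real obstacle here; the only subtlety is making sure the chosen representative of the weight class (modulo $e_1+\cdots+e_n$) is the one with non-negative entries, which is precisely what Remark \ref{rmk:wt_lattice} provides. Seminormality is exactly what is needed to guarantee that $\cepsilon_i(x)$ and $\cphi_i(x)$ are genuine non-negative integers attained by iterated application of the Kashiwara operators, so that the weight calculation via \textbf{C1} is valid.
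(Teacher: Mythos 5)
Your proof is correct and is essentially the paper's argument: both apply $\ce_i$ (resp.\ $\cf_i$) the maximal number of times allowed by seminormality, compute the terminal weight via \textbf{C1}, and invoke the non-negativity of weight entries from Remark \ref{rmk:wt_lattice}; the paper merely phrases it as a contradiction while you argue directly.
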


\begin{proof}
We prove the first statement; the second is similar. Since $\C$ is seminormal, let $k_0 := \cepsilon_i (x) = \max \{k: \ce_i^{\,k} (x) \neq \bot\}$. If $k_0 > \wt_{i+1} (x)$, then, as $\wt (\ce_i^{\,k_0} (x)) = \wt (x) + k_0 \alpha_i$, we would have $\wt_{i+1} (\ce_i^{\,k_0} (x)) = \wt_{i+1} (x)-k_0 < 0$, which is a contradiction by Remark \ref{rmk:wt_lattice}.
\end{proof}

\begin{lem}\label{lem:eps_phis_eq}
$\cepsilon_i (x) = \wt_{i+1} (x)$ if and only if $\cphi_i (x) = \wt_i (x)$.
\end{lem}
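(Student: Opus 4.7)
The plan is to observe that this equivalence is essentially an immediate consequence of the crystal axiom \textbf{C2} once we unpack the pairing $\langle \wt(x), \alpha_i^\vee \rangle$ in type $A_{n-1}$.

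First I would recall that since $\C$ is a crystal of type $A_{n-1}$, the simple roots are $\alpha_i = \mathbf{e}_i - \mathbf{e}_{i+1}$ with $\alpha_i^\vee = \alpha_i$, so under the standard pairing on $\mathbb{Z}^n$ we have
\[
\langle \wt(x), \alpha_i^\vee \rangle = \wt_i(x) - \wt_{i+1}(x).
\]
Substituting this into axiom \textbf{C2} gives
\[
\cphi_i(x) = \cepsilon_i(x) + \wt_i(x) - \wt_{i+1}(x),
\]
or equivalently
\[
\cphi_i(x) - \wt_i(x) = \cepsilon_i(x) - \wt_{i+1}(x).
\]

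From this identity the equivalence is immediate: the left-hand side vanishes precisely when $\cphi_i(x) = \wt_i(x)$, and the right-hand side vanishes precisely when $\cepsilon_i(x) = \wt_{i+1}(x)$. No obstacle is expected, as the argument is purely an unpacking of definitions; the only point worth flagging explicitly is the choice of representative of the weight lattice class fixed in Remark~\ref{rmk:wt_lattice}, which ensures $\wt_i(x)$ and $\wt_{i+1}(x)$ are well-defined non-negative integers so that the two equalities make literal sense.
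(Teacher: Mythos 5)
Your proof is correct and follows the same route as the paper: both apply axiom \textbf{C2} with $\langle \wt(x), \alpha_i^\vee \rangle = \wt_i(x) - \wt_{i+1}(x)$ in type $A_{n-1}$, and the equivalence falls out immediately. Rewriting the identity as $\cphi_i(x) - \wt_i(x) = \cepsilon_i(x) - \wt_{i+1}(x)$ is just a slightly more symmetric packaging of the paper's two analogous implications.
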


\begin{proof}
Suppose that $\cepsilon_i(x) = \wt_{i+1} (x)$. Then, by \textbf{C2}, $\cphi_i (x) = \cepsilon_i (x) + \langle \wt(x), \alpha_i \rangle = \wt_{i+1} (x) + \wt_i (x) - \wt_{i+1} (x) = \wt_i (x)$. The other implication is proved analogously.
\end{proof}

Next, we provide a construction to obtain quasi-crystal graphs from a connected crystal graph. Recently, Maas-Gariépy introduced an equivalent construction, by considering the induced subgraphs corresponding to fundamental quasi-symmetric functions \cite{MG23}.

\begin{defin}\label{def:crystal_quasi}
Let $(\C, \ce_i, \cf_i, \cepsilon_i, \cphi_i)$ be a connected Stembridge crystal. We construct a quasi-crystal $(\Q_{\C}, \qe_i, \qf_i, \qepsilon_i, \qphi_i)$ as follows: $\Q_{\C}$ has the same underlying set, index set $I$ and weight function $\wt$ as $\C$, and we define
\[
\qepsilon_i (x) := \begin{cases}
\cepsilon_i (x) & \text{if}\; \cepsilon_i (x) = \wt_{i+1} (x)\\
+ \infty & \text{otherwise}
\end{cases},\]
\[
\qe_i (x) := \begin{cases}
\ce_i (x) & \text{if}\; \cepsilon_i (x) = \wt_{i+1} (x)\\
\bot & \text{otherwise}
\end{cases}\]
and set $\qphi_i (x) := \qepsilon_i (x) + \langle \wt(x), \alpha_i \rangle$, and
 $\qf_i (x) := y$ if and only if $\qe_i (y)=x$.
\end{defin}

If $\cf_i (x) \neq \bot$ and $\qf_i (x) = \bot$, we say that $\cf_i$ is a \emph{strict Kashiwara operator} on $x \in \C$. Otherwise, it is simply called a quasi-Kashiwara operator.

In what follows we consider $\C$ and $\Q_{\C}$ as in Definition \ref{def:crystal_quasi}.

\begin{lem}\label{lem:inf_both}
Let $x, y \in \C$ be such that $\ce_i (x) = y$. Then, using the notation of Definition \ref{def:crystal_quasi}, $\qepsilon_i (x) = + \infty$ if and only if $\qepsilon_i (y) = + \infty$.
\end{lem}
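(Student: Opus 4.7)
The plan is to observe that the condition defining finiteness of $\qepsilon_i$, namely the equality $\cepsilon_i(z) = \wt_{i+1}(z)$, is preserved under the action of $\ce_i$. Concretely, I will show that the difference $\cepsilon_i(z) - \wt_{i+1}(z)$ takes the same value at $z = x$ and at $z = y = \ce_i(x)$, which immediately gives the equivalence.

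The key computation uses only axiom \textbf{C1} and the explicit form of the simple roots. From $\ce_i(x) = y$ we get $\cepsilon_i(y) = \cepsilon_i(x) - 1$ directly, and also $\wt(y) = \wt(x) + \alpha_i$. Since $\alpha_i = (0,\ldots,0,1,-1,0,\ldots,0)$ with the $1$ in position $i$, we read off $\wt_{i+1}(y) = \wt_{i+1}(x) - 1$. Subtracting these two equalities yields
\[
\cepsilon_i(x) - \wt_{i+1}(x) \;=\; \cepsilon_i(y) - \wt_{i+1}(y).
\]
By Lemma \ref{lem:leq_wt} this common value is $\leq 0$ in both cases, and by Definition \ref{def:crystal_quasi} we have $\qepsilon_i(z) = +\infty$ precisely when the difference is \emph{strictly} negative (equivalently, when $\cepsilon_i(z) \neq \wt_{i+1}(z)$). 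Consequently $\qepsilon_i(x) = +\infty$ iff the common difference is negative, iff $\qepsilon_i(y) = +\infty$.

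There is no real obstacle here: once one rewrites the condition "$\qepsilon_i(z) = +\infty$" as "$\cepsilon_i(z) < \wt_{i+1}(z)$" using Definition \ref{def:crystal_quasi} together with Lemma \ref{lem:leq_wt}, the statement reduces to the invariance of a single integer under $\ce_i$, which is immediate from axiom \textbf{C1}. The only thing to be careful about is to verify both implications symmetrically, but since the relation $\ce_i(x) = y$ is equivalent (by \textbf{C1}) to $\cf_i(y) = x$, the argument is genuinely symmetric in $x$ and $y$.
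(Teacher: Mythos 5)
Your proof is correct and is essentially the paper's argument: both rest on \textbf{C1} giving $\cepsilon_i(y)=\cepsilon_i(x)-1$ and $\wt_{i+1}(y)=\wt_{i+1}(x)-1$, so that the condition $\cepsilon_i<\wt_{i+1}$ (equivalently, $\qepsilon_i=+\infty$ by Definition \ref{def:crystal_quasi} and Lemma \ref{lem:leq_wt}) holds at $x$ iff it holds at $y$. Your phrasing via the invariance of the difference $\cepsilon_i-\wt_{i+1}$ is just a compact repackaging of the same computation, and it even makes the two implications follow at once rather than "analogously."
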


\begin{proof}
By Definition \ref{def:crystal_quasi} and Lemma \ref{lem:leq_wt}, $\qepsilon_i (x) = + \infty$ implies that $\cepsilon_i (x) < \wt_{i+1} (x)$, and consequently 
$$\cepsilon_i (x) -1 < \wt_{i+1} (x) -1.$$ 
Since $\ce_i (x) = y$, by \textbf{C1} we have $\cepsilon_i (y) = \cepsilon_i (x)-1$ and $\wt_{i+1} (y) = \wt_{i+1}(x)-1$, and thus $\cepsilon_i (y) < \wt_{i+1} (y)$ and $\qepsilon_i(y) = + \infty$. The proof of the reverse implication is analogous.
\end{proof}

\begin{lem}
Let $i, j \in I$ be such that $|i-j|>1$ and suppose that $\qe_i(x) = y$. Then, $\qepsilon_j(x) = + \infty$ if and only if $\qepsilon_j (y) = + \infty$.
\end{lem}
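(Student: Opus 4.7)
The plan is to unwind the definitions and apply the crystal case analysis already available. By Definition \ref{def:crystal_quasi}, $\qepsilon_j(w) = +\infty$ exactly when $\cepsilon_j(w) \neq \wt_{j+1}(w)$, and since Lemma \ref{lem:leq_wt} gives $\cepsilon_j(w) \leq \wt_{j+1}(w)$, this is equivalent to the strict inequality $\cepsilon_j(w) < \wt_{j+1}(w)$. So the whole claim reduces to showing that the pair $(\cepsilon_j, \wt_{j+1})$ changes in the same way (or rather, does not change at all) when we pass from $x$ to $y$ along an $i$-edge with $|i-j|>1$.

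First I would observe that the hypothesis $\qe_i(x) = y$ together with Definition \ref{def:crystal_quasi} forces $\ce_i(x) = y$ in the underlying Stembridge crystal $\C$, so all crystal-theoretic tools apply directly to the pair $(x,y)$. Next, since we are in type $A_{n-1}$, the condition $|i-j|>1$ gives $\langle \alpha_i, \alpha_j\rangle = 0$, so Proposition \ref{prop:stem_prop} is applicable and its third case yields
\[
\cepsilon_j(y) = \cepsilon_j(x) \quad \text{and} \quad \cphi_j(y) = \cphi_j(x).
\]
Then I would invoke axiom \textbf{C1} to write $\wt(y) = \wt(x) + \alpha_i$ and note that, because $|i-j|>1$, the simple root $\alpha_i = (\ldots,0,1,-1,0,\ldots)$ has zero entries in positions $j$ and $j+1$. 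Consequently $\wt_j(y) = \wt_j(x)$ and $\wt_{j+1}(y) = \wt_{j+1}(x)$.

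Combining these two observations, the equality $\cepsilon_j(x) = \wt_{j+1}(x)$ holds if and only if $\cepsilon_j(y) = \wt_{j+1}(y)$, and therefore, by Definition \ref{def:crystal_quasi}, $\qepsilon_j(x) = +\infty$ if and only if $\qepsilon_j(y) = +\infty$, as required.

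There is no real obstacle here: the argument is a direct corollary of Stembridge's axiom \textbf{S1} through Proposition \ref{prop:stem_prop}, plus the orthogonality of $\alpha_i$ and $\alpha_j$ manifested at the level of coordinates of the weight. The only care needed is to translate the $+\infty$ condition on $\qepsilon_j$ into a strict inequality between $\cepsilon_j$ and $\wt_{j+1}$ via Lemma \ref{lem:leq_wt}, so that the proof parallels (and is in fact even simpler than) the argument of Lemma \ref{lem:inf_both}.
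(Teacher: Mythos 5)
Your proof is correct and follows essentially the same route as the paper: translate $\qepsilon_j = +\infty$ into the strict inequality $\cepsilon_j < \wt_{j+1}$ via Definition \ref{def:crystal_quasi} and Lemma \ref{lem:leq_wt}, use the orthogonality of $\alpha_i,\alpha_j$ to see that $\cepsilon_j$ and $\wt_{j+1}$ are unchanged along the $i$-edge, and conclude. The only (harmless, arguably cleaner) difference is that you invoke case (3) of Proposition \ref{prop:stem_prop} where the paper appeals directly to axiom \textbf{S1}.
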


\begin{proof}
Since $\qe_i(x) = y$, we have $\ce_i(x) = y$ (and $\cepsilon_i (x) = \wt_{i+1}(x)$), and by axiom \textbf{S1}, $\cepsilon_j (x) = \cepsilon_j (y)$. Suppose that $\qepsilon_j (x) = + \infty$. Then, $\cepsilon_j (x) < \wt_{j+1} (x)$ and thus, $\cepsilon_j (y) = \cepsilon_j (x) < \wt_{i+1} (x)$. Since $|i-j|>1$, we have $\wt_{j+1} (x) = \wt_{j+1} (y)$. Therefore, $\cepsilon_j(y) < \wt_{j+1}(y)$ and $\qepsilon_j (y) = + \infty$. The other implication is proved similarly.
\end{proof}

\begin{lem}
Suppose that $\qepsilon_i(x) = + \infty$. Then, $\wt_i (x), \wt_{i+1}(x) >0$.
\end{lem}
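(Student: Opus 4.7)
The plan is to translate the hypothesis $\qepsilon_i(x) = +\infty$ back into the language of $\C$ via Definition \ref{def:crystal_quasi}, and then to use the inequalities of Lemma \ref{lem:leq_wt} together with the equivalence in Lemma \ref{lem:eps_phis_eq} to pin down both weight coordinates. This should be a short, essentially two-line argument with no genuine obstacle.

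First, I would unpack the definition: $\qepsilon_i(x) = +\infty$ happens precisely when $\cepsilon_i(x) \neq \wt_{i+1}(x)$. Combined with the inequality $\cepsilon_i(x) \leq \wt_{i+1}(x)$ from Lemma \ref{lem:leq_wt}, this yields the strict inequality $\cepsilon_i(x) < \wt_{i+1}(x)$. Since $\C$ is seminormal we have $\cepsilon_i(x) \geq 0$, and therefore $\wt_{i+1}(x) > 0$, handling one of the two coordinates.

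For the other coordinate I would invoke Lemma \ref{lem:eps_phis_eq}: from $\cepsilon_i(x) \neq \wt_{i+1}(x)$ it follows that $\cphi_i(x) \neq \wt_i(x)$. Applying the second inequality of Lemma \ref{lem:leq_wt}, namely $\cphi_i(x) \leq \wt_i(x)$, we upgrade this to the strict inequality $\cphi_i(x) < \wt_i(x)$, and seminormality ($\cphi_i(x) \geq 0$) then gives $\wt_i(x) > 0$. No case distinctions or deeper Stembridge axioms seem to be required; the only subtlety worth flagging is that Remark \ref{rmk:wt_lattice} is what secures the base-level non-negativity of $\cepsilon_i$ and $\cphi_i$ through the seminormality assumption, so I would cite it implicitly when I invoke Lemma \ref{lem:leq_wt}.
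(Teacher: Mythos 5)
Your proposal is correct and follows essentially the same route as the paper: both unpack Definition \ref{def:crystal_quasi} to get $\cepsilon_i(x) < \wt_{i+1}(x)$ via Lemma \ref{lem:leq_wt}, transfer this to $\cphi_i(x) < \wt_i(x)$ via Lemma \ref{lem:eps_phis_eq}, and conclude from seminormality ($\cepsilon_i(x), \cphi_i(x) \geq 0$). The only trivial quibble is that Remark \ref{rmk:wt_lattice} is what underlies Lemma \ref{lem:leq_wt} itself rather than the non-negativity of the length functions, which comes from seminormality alone.
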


\begin{proof}
Since $\qepsilon_i (x) = + \infty$, we have $\cepsilon_i (x) < \wt_{i+1} (x)$ and $\cphi_i(x) < \wt_i (x)$, by Definition \ref{def:crystal_quasi} and Lemma \ref{lem:eps_phis_eq}. Since $\C$ is seminormal, we have $\cepsilon_i (x),\, \cphi_i (x) \geq 0$, which implies that $\wt_i (x),\, \wt_{i+1}(x) >0$.
\end{proof}

We prove the following result in subsection \ref{subsec:proof_thm} below.

\begin{thm}\label{thm:crystal_quasi_ax}
The quasi-crystal graph $\Q_{\C}$ is seminormal and satisfies the axioms of Definition \ref{def:local_axioms}.
\end{thm}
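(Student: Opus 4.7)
The plan is to verify seminormality and the axioms of Definition~\ref{def:local_axioms} in succession, combining the already-established Lemmas \ref{lem:inf_both}, \ref{lem:ax_s1}, \ref{lem:leq_wt}, and \ref{lem:eps_phis_eq}, the (unnamed) lemma stating that $\qepsilon_i(x) = +\infty$ implies $\wt_i(x), \wt_{i+1}(x) > 0$, and Stembridge's Proposition~\ref{prop:stem_prop}. Seminormality follows from Lemma~\ref{lem:inf_both}: the property ``$\qepsilon_i$ is finite'' is constant along each $\ce_i$-string of $\C$, so on such a string $\qe_i$ coincides with $\ce_i$ and the maximality condition descends from the seminormality of $\C$. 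For \textbf{LQ1}, if $\qepsilon_i(x) = 0$ then $\cepsilon_i(x) = \wt_{i+1}(x) = 0$; since $\wt_{i+1}(x) = 0$, the unnamed lemma yields $\qepsilon_{i+1}(x) \neq +\infty$, whence $\qphi_{i+1}(x) = \cphi_{i+1}(x) \leq \wt_{i+1}(x) = 0$ by Lemma~\ref{lem:leq_wt}; the converse is symmetric.

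For \textbf{LQ2}, set $y = \qe_i(x) = \ce_i(x)$. Clause~(1) follows from Proposition~\ref{prop:stem_prop} case~3 together with the invariance of $\wt_{j+1}$ under $\ce_i$ when $|i-j| > 1$. For clause~(2), if $\qepsilon_{i+1}(x)$ is finite then Proposition~\ref{prop:stem_prop} case~2 would force $\cepsilon_{i+1}(y) > \wt_{i+2}(y)$, contradicting Lemma~\ref{lem:leq_wt}; so case~1 applies and $\qepsilon_{i+1}(y) = \qepsilon_{i+1}(x)$. If $\qepsilon_{i+1}(x) = +\infty$ and $\qepsilon_i(y) = 0$, then $\wt_{i+1}(y) = 0$ forces $\cphi_{i+1}(y) = 0$ by Lemma~\ref{lem:leq_wt}, and Lemma~\ref{lem:eps_phis_eq} yields $\cepsilon_{i+1}(y) = \wt_{i+2}(y)$, with positivity from the unnamed lemma. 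The remaining danger, and the main obstacle, is the case $\qepsilon_{i+1}(x) = +\infty$, $\qepsilon_i(y) > 0$ at the tight boundary $\cepsilon_{i+1}(x) = \wt_{i+2}(x) - 1$, where Proposition~\ref{prop:stem_prop} case~2 would drive $\cepsilon_{i+1}(y)$ up to $\wt_{i+2}(y)$. I plan to rule this out by extending the $\ce_i$-chain of $x$ down to $\ce_i^{\wt_{i+1}(x)}(x)$, where $\cphi_{i+1}$ and $\wt_{i+1}$ both vanish, observing that the total increase of $\cepsilon_{i+1}$ along the chain is exactly $1$ in the tight case so at most one case~2 step occurs, and then using the type-$A_{n-1}$ signature rule (with $\ce_i$ acting on the rightmost uncancelled $(i+1)$) to show that when $\cepsilon_i(x) \geq 2$ the $\wt_{i+1}(x) - 1 \geq 1$ other unmatched $(i+1)$'s in the $(i+1)$-signature always supply a replacement partner in the greedy re-matching, so the unique case~2 step cannot be step~$1$. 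Clause~(3) is handled dually.

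For \textbf{LQ3}, suppose $\qe_i(x) = y$ and $\qe_j(x) = z$ are both defined with $i \neq j$; then $\cepsilon_i(x) = \wt_{i+1}(x)$ and $\cepsilon_j(x) = \wt_{j+1}(x)$. For $|i-j|>1$, Proposition~\ref{prop:stem_prop} case~3 and Stembridge's axiom~\textbf{S2} give $\ce_i \ce_j(x) = \ce_j \ce_i(x) \neq \bot$, and invariance of the relevant weights ensures both $\qe_j(y)$ and $\qe_i(z)$ are defined. For $|i-j|=1$ (say $j = i+1$; $j = i-1$ is dual), Lemma~\ref{lem:leq_wt} forbids Proposition~\ref{prop:stem_prop} case~2 for $y$ with respect to $i+1$, forcing case~1; the \textbf{S2}-equivalence (case~1 for $y$ iff case~2 for $z$) then yields $\cepsilon_i(z) = \cepsilon_i(x) + 1 = \wt_{i+1}(z)$, so $\qe_i(z)$ is defined, and axiom~\textbf{S2} delivers the commutativity. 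Axiom~\textbf{LQ3}$'$ is proved dually using axiom~\textbf{S2}$'$.
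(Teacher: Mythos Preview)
Your treatment of seminormality, \textbf{LQ1}, \textbf{LQ2}(1), and \textbf{LQ3}/\textbf{LQ3}$'$ is correct and parallels the paper. The one variation in \textbf{LQ3} for $|i-j|=1$---deducing $\cepsilon_i(z)=\wt_{i+1}(z)$ via the $\cphi_i$-clause of \textbf{S2} together with Proposition~\ref{prop:stem_prop}---is valid and slightly more direct than the paper, which instead invokes the already-established \textbf{LQ2}.

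The genuine gap is in the hard subcase of \textbf{LQ2}(2). You correctly isolate the only dangerous configuration as $\qepsilon_{i+1}(x)=+\infty$, $\qepsilon_i(y)>0$, with $\cepsilon_{i+1}(x)=\wt_{i+2}(x)-1$, where the unique case-2 step of Proposition~\ref{prop:stem_prop} along the $\ce_i$-string might be the step $x\to y$. But your proposed resolution appeals to ``the type-$A_{n-1}$ signature rule'' and an informal greedy re-matching claim. This fails for two reasons. First, $\C$ is an \emph{abstract} Stembridge crystal; the signature rule is a statement about words in $\mathcal{B}_n^{\otimes k}$, so invoking it here presupposes Stembridge's classification theorem (every connected Stembridge crystal embeds in a word crystal), which is external to the paper's framework and is exactly the sort of global input a local characterization should avoid. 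Second, even granting a word model, the sentence ``the other unmatched $(i{+}1)$'s always supply a replacement partner so the unique case~2 step cannot be step~1'' is a heuristic, not an argument: you have not specified which letter $\ce_i$ acts on, how that action interacts with the $(i{+}1)/(i{+}2)$ bracketing, or why the change in bracketing must be deferred past the first step.

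The paper resolves this configuration using only the Stembridge axioms. Assuming $\qepsilon_i(y)=k>0$, it runs the $\ce_i$-chain from $y$ up to its top $z_k$ (where $\cphi_{i+1}(z_k)=0$ by \textbf{LQ1}), builds parallel $\cf_{i+1}$-chains below $y$ and $x$ using \textbf{S2}$'$ to obtain a commuting square, and then splits on whether $\cepsilon_i(x)=\cepsilon_i(x_1)$ or $\cepsilon_i(x)=\cepsilon_i(x_1)+1$: the first alternative propagates via iterated \textbf{S2} to force $\cf_{i+1}(z_k)\neq\bot$, contradicting $\cphi_{i+1}(z_k)=0$; the second triggers the \textbf{S3} configuration at $x_1$ and forces $\ce_i\ce_{i+1}(x_1)\neq\ce_{i+1}\ce_i(x_1)$ while both equal $y$. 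This stays entirely within the local axioms.
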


An example of the construction of Definition \ref{def:crystal_quasi} is depicted in Figure \ref{fig:crystal_quasi_3121}.

\begin{figure}[h]
\includegraphics[scale=0.9]{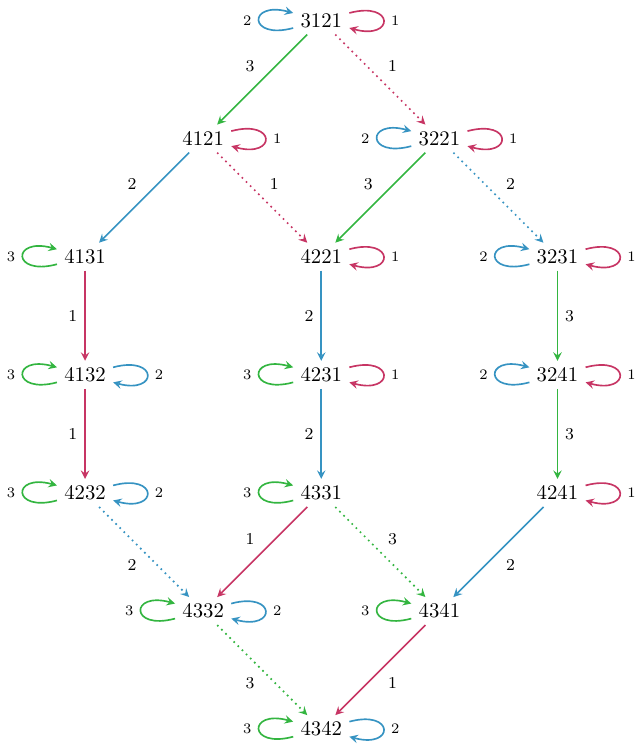}
\caption{The crystal graph having highest weight element $3121$, partitioned into quasi-crystal components having highest weight elements $3121$, $3221$ and $3231$. The solid non-loop edges denote the action of quasi-Kashiwara operators and the dashed ones denote the action of strict Kashiwara operators. The loops were added according to Definition \ref{def:crystal_quasi}.}
\label{fig:crystal_quasi_3121}
\end{figure}

\begin{cor}
Let $\C$ be a connected Stembridge crystal having highest weight $\lambda$. Then the number of connected components in $\Q_{\C}$ is given by $f_{\lambda}$, the number of standard Young tableaux of shape $\lambda$.
\end{cor}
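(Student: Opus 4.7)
The plan is to prove the corollary by comparing the character of $\Q_{\C}$, expressed as a sum over its connected components, with Gessel's classical expansion of $s_\lambda$ as a sum of fundamental quasi-symmetric polynomials.

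First, by Theorem \ref{thm:crystal_quasi_ax}, $\Q_{\C}$ is seminormal and satisfies the local axioms of Definition \ref{def:local_axioms}. Each connected component of $\Q_{\C}$ inherits finiteness from $\C$ and is therefore trivially bounded above, so by Theorem \ref{thm:uniq_hw} it has a unique highest weight element. Hence counting connected components reduces to counting highest weight elements of $\Q_{\C}$.

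Second, by Theorem \ref{thm:wt_iso} (combined with the fact, stated in the abstract and established in Section \ref{sec:local_axs}, that the weight of the highest weight element is always a composition), each connected component $D$ of $\Q_{\C}$ is determined up to isomorphism by the composition $\alpha_D := \wt(u_D)$, where $u_D$ is its unique highest weight element, and is isomorphic to the quasi-crystal graph of semistandard quasi-ribbon tableaux of shape $\alpha_D$. The weight generating polynomial of the latter in the variables $x_1, \dots, x_n$ is the fundamental quasi-symmetric polynomial $F_{\alpha_D}(x_1, \dots, x_n)$.

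Third, since $\Q_{\C}$ and $\C$ share the same underlying set and weight function, and $\C$ is a connected Stembridge crystal of highest weight $\lambda$ of type $A_{n-1}$, the total weight generating polynomial of $\Q_{\C}$ equals the Schur polynomial $s_\lambda(x_1, \dots, x_n)$; by the previous step it also equals $\sum_{D} F_{\alpha_D}(x_1, \dots, x_n)$. Comparing this with Gessel's classical identity
\[
s_\lambda(x_1, \dots, x_n) = \sum_{T \in \mathrm{SYT}(\lambda)} F_{\mathrm{Des}(T)}(x_1, \dots, x_n),
\]
and invoking the linear independence of the fundamental quasi-symmetric polynomials (for $n$ sufficiently large, or equivalently in the ring of quasi-symmetric functions, with a stabilization argument covering the remaining cases), one concludes that the two decompositions agree term-by-term, so in particular the total number of summands on each side coincides, yielding $f_\lambda = |\mathrm{SYT}(\lambda)|$ components.

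The main technical hurdle I expect is justifying the character identification in the second step, namely that each component of $\Q_{\C}$ contributes exactly the fundamental quasi-symmetric polynomial $F_{\alpha_D}$. This combines Theorem \ref{thm:wt_iso} with the explicit description of quasi-crystals of semistandard quasi-ribbon tableaux developed in \cite{CM17, CGM23}, and requires some care about the interplay between the finite variable setting and the abstract linear independence of fundamental quasi-symmetric functions.
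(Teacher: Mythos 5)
Your proposal is correct and follows essentially the same route as the paper: identify the character of $\C$ (and hence of $\Q_{\C}$) with $s_\lambda$, recognize that each connected component contributes a fundamental quasi-symmetric function $F_{\alpha}$, and compare with Gessel's expansion $s_\lambda = \sum_{T\in\mathsf{SYT}(\lambda)} F_{\mathsf{DesComp}(T)}$ using the linear independence of the $F_\alpha$. The extra care you take with the component-by-component identification via Theorems \ref{thm:uniq_hw} and \ref{thm:wt_iso} and with the finite-variable stabilization is a more explicit version of what the paper leaves implicit, not a different argument.
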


\begin{proof}
The character of $\C$ is the Schur function $s_{\lambda}$, where $\lambda$ is the highest weight (see, for instance, \cite{BumpSchi17}). Since the character of a quasi-crystal connected component is a fundamental quasi-symmetric function $F_{\alpha}$, taking the characters of $\Q_{\C}$, one obtains a decomposition of $s_{\lambda}$ as a sum of fundamental quasi-symmetric functions. The result then follows from the following decomposition \cite{Gess19}:
 $$s_{\lambda} = \sum\limits_{T \in \mathsf{SYT}(\lambda)} F_{\mathsf{DesComp}(T)},$$ 
where $\mathsf{DesComp}(T)$ denotes the descent composition of a standard tableau $T$, together with the fact that the fundamental quasi-symmetric functions $F_{\alpha}$ form a basis for the ring of quasi-symmetric functions.
\end{proof}

\subsection{Proof of Theorem \ref{thm:crystal_quasi_ax}}\label{subsec:proof_thm}

In this section we prove Theorem \ref{thm:crystal_quasi_ax}. For simplicity, we let $\Q := \Q_{\C}$. By construction, $\Q$ is seminormal.

\begin{prop}\label{prop:q_LQ1}
$\Q$ satisfies axiom \emph{\textbf{LQ1}}.
\end{prop}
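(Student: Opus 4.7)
The plan is to reduce both sides of the desired equivalence to the single condition $\wt_{i+1}(x)=0$, using Definition \ref{def:crystal_quasi} together with Lemmas \ref{lem:leq_wt} and \ref{lem:eps_phis_eq}.

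First I would observe from Definition \ref{def:crystal_quasi} that $\qepsilon_i(x)=0$ is equivalent to $\cepsilon_i(x)=\wt_{i+1}(x)=0$: indeed $\qepsilon_i(x)$ being a non-negative integer forces the ``finite'' branch of the definition, so $\qepsilon_i(x)=\cepsilon_i(x)=\wt_{i+1}(x)$. Moreover, by Lemma \ref{lem:leq_wt} together with seminormality of $\C$, the single condition $\wt_{i+1}(x)=0$ already forces $\cepsilon_i(x)=0$, so in fact $\qepsilon_i(x)=0 \iff \wt_{i+1}(x)=0$.

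Next I would show that $\qphi_{i+1}(x)=0$ is likewise equivalent to $\wt_{i+1}(x)=0$. For the forward direction, $\qphi_{i+1}(x)=0$ together with the identity $\qphi_{i+1}(x)=\qepsilon_{i+1}(x)+\langle \wt(x),\alpha_{i+1}\rangle$ from Definition \ref{def:crystal_quasi} forces $\qepsilon_{i+1}(x)$ to be finite. Hence $\qepsilon_{i+1}(x)=\cepsilon_{i+1}(x)=\wt_{i+2}(x)$, and substituting gives
\[
0 = \wt_{i+2}(x) + \bigl(\wt_{i+1}(x)-\wt_{i+2}(x)\bigr) = \wt_{i+1}(x).
\]
Conversely, if $\wt_{i+1}(x)=0$, then by Lemma \ref{lem:leq_wt} and seminormality $\cphi_{i+1}(x)=0=\wt_{i+1}(x)$; Lemma \ref{lem:eps_phis_eq} then yields $\cepsilon_{i+1}(x)=\wt_{i+2}(x)$, so $\qepsilon_{i+1}(x)$ is finite and equal to $\wt_{i+2}(x)$, whence $\qphi_{i+1}(x)=\wt_{i+1}(x)=0$.

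The only real subtlety is excluding $\qepsilon_{i+1}(x)=+\infty$ in the implication $\qepsilon_i(x)=0 \Rightarrow \qphi_{i+1}(x)=0$; this is exactly where Lemma \ref{lem:eps_phis_eq} intervenes, converting the vanishing of $\cphi_{i+1}(x)$ obtained from $\wt_{i+1}(x)=0$ into the equality $\cepsilon_{i+1}(x)=\wt_{i+2}(x)$ that places $\qepsilon_{i+1}(x)$ in the finite branch of Definition \ref{def:crystal_quasi}. With both equivalences pivoting on $\wt_{i+1}(x)=0$, axiom \textbf{LQ1} follows immediately.
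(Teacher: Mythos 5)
Your proof is correct and follows essentially the same route as the paper's: both reduce the two sides of \textbf{LQ1} to the condition $\wt_{i+1}(x)=0$ via Definition \ref{def:crystal_quasi}, Lemma \ref{lem:leq_wt} and seminormality, with Lemma \ref{lem:eps_phis_eq} ruling out the $+\infty$ branch for $\qepsilon_{i+1}(x)$. Your write-up merely makes explicit (symmetrically, for both implications) the finiteness step that the paper leaves tacit in its ``the other implication is proved analogously.''
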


\begin{proof}
Let $i, i+1 \in I$. Suppose that $\qepsilon_i (x) = 0$. Since $\qepsilon_i (x) \neq + \infty$, by Definition \ref{def:crystal_quasi}, we have $\qepsilon_i (x) = \cepsilon_i (x) = \wt_{i+1} (x) = 0$. By Lemma \ref{lem:leq_wt}, we have $\cphi_{i+1} (x) \leq \wt_{i+1} (x) = 0$. Therefore, $\cphi_{i+1}(x) = 0 = \wt_{i+1} (x)$ and thus $\qphi_{i+1} (x) = 0$. The other implication is proved analogously. 
\end{proof}

\begin{prop}\label{prop:q_LQ2}
$\Q$ satisfies axiom \emph{\textbf{LQ2}}.
\end{prop}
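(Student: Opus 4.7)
The plan is to verify each of the three clauses of \textbf{LQ2} by combining Proposition~\ref{prop:stem_prop}, which describes how $\cepsilon_j$ and $\cphi_j$ transform under $\ce_i$, with the defining criterion in Definition~\ref{def:crystal_quasi} that $\qepsilon_j(\cdot)<+\infty$ if and only if $\cepsilon_j(\cdot)=\wt_{j+1}(\cdot)$. Throughout I use that $\qe_i(x)=y$ entails $\ce_i(x)=y$ and $\cepsilon_i(x)=\wt_{i+1}(x)$.

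Clause (1) will be a short computation: for $|i-j|>1$ the roots $\alpha_i,\alpha_j$ are orthogonal, so Proposition~\ref{prop:stem_prop} gives $\cepsilon_j(y)=\cepsilon_j(x)$, and since $\alpha_i$ is supported on $\{i,i{+}1\}$ we have $\wt_{j+1}(y)=\wt_{j+1}(x)$; hence the finiteness criterion for $\qepsilon_j$ transfers. For clause (2) I set $j=i{+}1$ and apply Proposition~\ref{prop:stem_prop}, splitting into Case (a) $\cepsilon_{i+1}(y)=\cepsilon_{i+1}(x)$, which trivially preserves $\qepsilon_{i+1}$, and Case (b) $\cepsilon_{i+1}(y)=\cepsilon_{i+1}(x)+1$. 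Lemma~\ref{lem:leq_wt} forces $\cepsilon_{i+1}(x)<\wt_{i+2}(x)$ in Case (b), hence $\qepsilon_{i+1}(x)=+\infty$; within Case (b) the further dichotomy $\cepsilon_{i+1}(y)<\wt_{i+2}(y)$ (giving $\qepsilon_{i+1}(y)=+\infty=\qepsilon_{i+1}(x)$) versus $\cepsilon_{i+1}(y)=\wt_{i+2}(y)$ (subcase (b.ii), in which $\qepsilon_{i+1}(y)$ is finite and, since $\wt_{i+2}(x)\geq 1$, positive) exactly identifies when $\qepsilon_{i+1}(x)\neq\qepsilon_{i+1}(y)$ and yields $\qepsilon_{i+1}(y)\neq 0$.

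What remains, and is the main obstacle, is the equivalence ``subcase (b.ii) $\Leftrightarrow \qepsilon_i(y)=0$'' (equivalently, $\cepsilon_i(x)=1$). The direction ``$\cepsilon_i(x)=1\Rightarrow$ subcase (b.ii)'' is immediate: $\wt_{i+1}(y)=0$ forces $\cphi_{i+1}(y)=0$ by Lemma~\ref{lem:leq_wt}, and Lemma~\ref{lem:eps_phis_eq} then yields $\cepsilon_{i+1}(y)=\wt_{i+2}(y)$. For the converse I plan to invoke Stembridge's theorem to embed the connected component of $x$ into $\mathcal{B}_n^{\otimes k}$ and argue via the standard signature/bracketing rule on words: $\cepsilon_i(x)=\wt_{i+1}(x)$ means that every letter $i{+}1$ of $x$ precedes every letter $i$ in the $(i,i{+}1)$-signature, so $\ce_i$ converts the rightmost letter $i{+}1$ of $x$ to $i$. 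Subcase (b.ii) then translates to: the $(i{+}1,i{+}2)$-signature of $x$ has exactly one matched pair (the one lost in passing to $y$) while that of $y$ has none; if $x$ had $\wt_{i+1}(x)\geq 2$ letters $i{+}1$, any surviving letter $i{+}1$ would still precede the $(i{+}2)$-letter it was matched with, producing a matched pair in $y$, a contradiction. Hence $\wt_{i+1}(x)=1$, as required.

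Clause (3) will follow by the dual argument, replacing $\cepsilon_{i+1}$ by $\cphi_{i-1}$ and using the dual form of Proposition~\ref{prop:stem_prop} together with Lemma~\ref{lem:eps_phis_eq}.
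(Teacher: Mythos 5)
Your proposal is correct, but on the crucial step it takes a genuinely different route from the paper. The preliminary reductions — clause (1) from \textbf{S1} and $\wt_{j+1}(y)=\wt_{j+1}(x)$, the dichotomy of Proposition \ref{prop:stem_prop} for $j=i+1$, Lemma \ref{lem:leq_wt} forcing $\qepsilon_{i+1}(x)=+\infty$ whenever $\cepsilon_{i+1}(y)=\cepsilon_{i+1}(x)+1$, and the easy fact that $\qepsilon_i(y)=0$ makes $\qepsilon_{i+1}(y)$ finite via Lemmas \ref{lem:leq_wt} and \ref{lem:eps_phis_eq} — all mirror the paper's proof (which routes the last point through Proposition \ref{prop:q_LQ1}, itself proved from those two lemmas). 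The divergence is the implication $\qepsilon_{i+1}(x)\neq\qepsilon_{i+1}(y)\Rightarrow\qepsilon_i(y)=0$: the paper stays entirely local, assuming $\qepsilon_i(y)=k>0$, building the $i$-string above $y$ and the $(i+1)$-strings below $x$ and $y$, and chasing squares with \textbf{C1}, \textbf{S2}, \textbf{S2$'$} until both alternatives of \textbf{S1} at $x=\ce_{i+1}(x_1)$ yield contradictions (the second via \textbf{S3}); you instead invoke Stembridge's classification to transport the component into the word crystal $\mathcal{B}_n^{\otimes k}$ (legitimate, since all data in Definition \ref{def:crystal_quasi} are preserved by crystal isomorphisms, and not circular) and count matched $(i+1,i+2)$ pairs: in subcase (b.ii) $x$ has exactly one matched pair and $y$ none, yet if $\wt_{i+1}(x)\geq 2$ some surviving letter $i+1$ still lies to the left of the letter $i+2$ from $x$'s matched pair — phrase it this way so as to also cover the case where the letter changed by $\ce_i$ is itself the matched $i+1$ — so $y$ would retain a matched pair; hence $\wt_{i+1}(x)=1$ and $\qepsilon_i(y)=0$. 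This is sound and much shorter, but it buys brevity at the cost of importing the full strength of Stembridge's theorem, so the argument is no longer self-contained within the local axioms, somewhat against the grain of the paper's purely axiomatic proof (though the paper itself uses the classification later, when counting the components of $\Q_{\C}$). One bookkeeping repair: what you need is the pair of implications ``subcase (b.ii) $\Rightarrow\qepsilon_i(y)=0$'' and ``$\big(\qepsilon_{i+1}(x)=+\infty\wedge\qepsilon_i(y)=0\big)\Rightarrow\qepsilon_{i+1}(y)\neq+\infty$''; the full equivalence ``subcase (b.ii) $\Leftrightarrow\qepsilon_i(y)=0$'' as you state it fails in the direction you call immediate when $\qepsilon_{i+1}(x)$ is finite (e.g.\ $x$ a single letter $i+1$), although this slip does not affect the validity of your verification of \textbf{LQ2}~(2).
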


\begin{proof}
Suppose that $\qe_i (x) = y$. By Definition \ref{def:crystal_quasi}, we have $\ce_i (x) = y$.

\medskip
\textbf{We show that $\Q$ satisfies LQ2} (1). Suppose that $|i-j|>1$. We claim that $\qepsilon_j (x) = + \infty$ if and only if $\qepsilon_j (y) = + \infty$. Indeed, since $\ce_i(x) = y$, by \textbf{C1} we have $\wt_{j+1} (x) = \wt_{j+1} (y)$, and by  axiom \textbf{S1}, we have $\cepsilon_j(x) = \cepsilon_j (y)$. Therefore, $\cepsilon_j (x) < \wt_{j+1} (x)$ if and only if $\cepsilon_j (y) < \wt_{j+1} (y)$.
Thus, if $\qepsilon_j (x) = + \infty$, then $\qepsilon_j (y) = + \infty = \qepsilon_j (x)$. If $\qepsilon_j (x) \neq + \infty$, then $\qepsilon_j (y) \neq + \infty$, therefore $\qepsilon_j (x) = \cepsilon_j (x)$ and $\qepsilon_j (y) = \cepsilon_j (y)$. By axiom \textbf{S1}, we have $\cepsilon_j (x) = \cepsilon_j (y)$, which concludes the proof of \textbf{LQ2} (1).

\textbf{We show that $\Q$ satisfies LQ2} (2). The proof of \textbf{LQ2} (3) is analogous. Let $i+1 \in I$

\begin{enumerate}
\item Suppose that $\qepsilon_{i+1} (x) = + \infty$ and that $\qepsilon_i (y) = 0$. By Proposition \ref{prop:q_LQ1}, $\qepsilon_i (y)=0$ implies that $\qphi_{i+1} (y) = 0$. In particular, $\qphi_{i+1} (y) \neq + \infty$, and by Definition \ref{def:crystal_quasi}, we have $\qepsilon_{i+1} (y) \neq + \infty$. Therefore, $\qepsilon_{i+1} (x) = + \infty \neq \qepsilon_{i+1} (y)$.

\item Now suppose that $\qepsilon_{i+1} (y) = 0$. We will show that $\qepsilon_{i+1} (x) = \qepsilon_{i+1} (y)$. By Definition \ref{def:crystal_quasi} and Lemma \ref{lem:eps_phis_eq}, as $\qepsilon_{i+1} (y) = 0 \neq + \infty$, we have $\cepsilon_{i+1} (y) = \wt_{i+2} (y) = 0$ and $\cphi_{i+1}(y) = \wt_{i+1} (y)$. Since $\ce_i(x)=y$, Lemma \ref{lem:ax_s1} implies that $\cepsilon_{i+1}(y) \geq \cepsilon_{i+1}(x)$. Thus, since $\cepsilon_{i+1} (y) = 0$, we have $\cepsilon_{i+1} (x) \leq 0$, which implies, as $\C$ is seminormal, that $\cepsilon_{i+1} (x)=0$. Moreover, $\ce_i(x) = y$ implies that $\wt_{i+2} (x) = \wt_{i+2} (y)$. Thus, we have $\cepsilon_{i+1} (x) = \wt_{i+2} (y) = \wt_{i+2} (x) = 0$ and, consequently, $\qepsilon_{i+1} (x) = 0 = \qepsilon_{i+1}(y)$.

\item Suppose that $\qepsilon_{i+1} (x) \neq \qepsilon_{i+1} (y)$. We will show that $\qepsilon_{i+1} (x) = + \infty$ and $\qepsilon_i (y)=0$. Since $\qepsilon_{i+1} (x) \neq \qepsilon_{i+1} (y)$, we claim that 
\begin{equation}\label{eq:sq_1}
\qepsilon_{i+1} (x) = + \infty, \quad \qepsilon_{i+1} (y) \neq + \infty.
\end{equation}
If $\qepsilon_{i+1} (x) \neq + \infty$ and $\qepsilon_{i+1}(y) = + \infty$, we would have $\cepsilon_{i+1} (x) = \wt_{i+2} (x)$ and $\cepsilon_{i+1} (y) < \wt_{i+2} (y)$. Since $\ce_i(x)=y$, we have $\wt (y) = \wt (x) + \alpha_{i}$, hence $\wt_{i+2} (x) = \wt_{i+2} (y)$. This would imply that 
\[
\cepsilon_{i+1} (y) < \wt_{i+2} (y) = \wt_{i+2} (x) = \cepsilon_{i+1}(x),
\]
which contradicts Lemma \ref{lem:ax_s1}. If $\qepsilon_{i+1} (x), \,\qepsilon_{i+1} (y) \neq + \infty$, then we would have
\[
\cepsilon_{i+1} (y) = \wt_{i+2} (y) = \wt_{i+2} (x) = \cepsilon_{i+1} (x),
\]
and consequently, $\cepsilon_{i+1} (x) = \cepsilon_{i+1} (y)$, which implies $\qepsilon_{i+1} (x) = \qepsilon_{i+1} (y)$, since both are finite, contradicting the hypothesis that $\qepsilon_{i+1} (x) \neq \qepsilon_{i+1} (y)$.
Therefore, equation \eqref{eq:sq_1} holds.
In particular, as $\cepsilon_{i+1} (x) < \wt_{i+2} (x) = \wt_{i+2} (y) = \cepsilon_{i+1} (y)$, by axiom \textbf{S1} we have
\begin{equation}\label{eq:1_eps_1}
\cepsilon_{i+1} (y) = \cepsilon_{i+1} (x)+1.
\end{equation}

Since $\qepsilon_{i+1} (x) = + \infty$, it remains to show that $\qepsilon_i (y) = 0$. 
So, suppose that $\qepsilon_i (y) = k > 0$. In particular, as $\qepsilon_i (y) \neq +\infty$ (because $\qe_i (x) = y$), we have $\cepsilon_{i} (y) = \wt_{i+1} (y) = k$. Consider the connected component consisting of only $i$-labelled edges, containing $y$. Clearly, that component has a unique highest weight element, which is not $y$, since $\cepsilon_i (y) > 0$.
Let $z$ be the highest weight element of that component.
Then, there exists $z_1, \ldots, z_k = z \in \Q$ such that
\begin{equation}\label{eq:1_z}
\begin{cases}
\qe_i (y) = z_1 \;\text{(and thus,}\; \ce_i (y) = z_1),\\
\qe_i (z_l) = z_{l+1} \;\text{(and thus,}\; \ce_i (z_l) = z_{l+1}), \;\text{for}\; l = 1, \ldots, k-1,\\
\qepsilon_i (z_k) = 0 \;\text{(and thus,}\; \cepsilon_i (z_k)=0),
\end{cases}
\end{equation}
as shown in the following diagram:

\begin{center}
\includegraphics[scale=1]{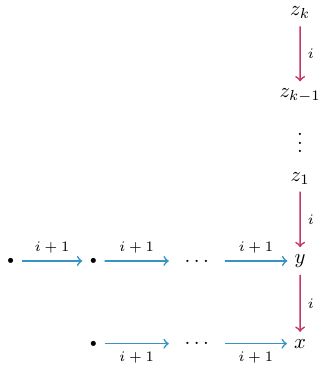}
\end{center}
By \textbf{LQ1}, $\qepsilon_i (z_k) = 0$ implies that $\qphi_{i+1} (z_k)=0$, and thus,
\begin{equation}\label{eq:phi_z}
\cphi_{i+1}(z_k) = 0.
\end{equation}
By \eqref{eq:sq_1}, we have $\qepsilon_{i+1} (y) \neq + \infty$ and hence, 
\begin{equation}\label{eq:phi_i1_k}
\cphi_{i+1} (y) = \wt_{i+1} (y) = k. 
\end{equation}
Thus, there exists $y_1, \ldots, y_k \in \Q$ such that
\begin{equation}\label{eq:1_y}
\begin{cases}
\cf_{i+1} (y) = y_1\\
\cf_{i+1} (y_l) = y_{l+1}, \;\text{for}\; l = 1, \ldots, k-1\\
\cphi_{i+1} (y_k) = 0,
\end{cases}
\end{equation}
as shown in the following diagram:
\begin{center}
\includegraphics[scale=1]{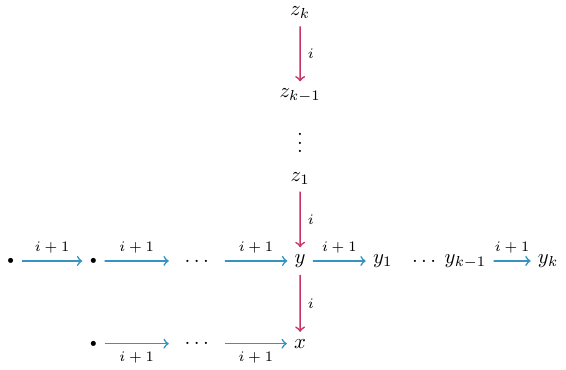}
\end{center}
From equations \eqref{eq:1_eps_1} and \eqref{eq:phi_i1_k}, and Proposition \ref{prop:stem_prop}, we have
\begin{equation}\label{eq:phi_xy}
\cphi_{i+1} (x) = \cphi_{i+1} (y) = k. 
\end{equation}
Then, there exists $x_1, \ldots, x_k \in \Q$ such that
\begin{equation}\label{eq:1_x}
\begin{cases}
\cf_{i+1} (x) = x_1,\\
\cf_{i+1} (x_l) = x_{l+1}, \;\text{for}\; l = 1, \ldots, k-1,\\
\cphi_{i+1} (x_k) = 0.
\end{cases}
\end{equation}
We have $\cf_i(y) = x$ and $\cf_{i+1} (y)= y_1$. From \eqref{eq:1_eps_1} and \eqref{eq:phi_xy}, axiom \textbf{S2$'$} implies that $\cf_i \cf_{i+1} (y) = \cf_{i+1} \cf_i (y)$ and thus
\begin{align*}
\cf_i (y_1) &= \cf_i \cf_{i+1} (y) &\text{by \eqref{eq:1_y}}\\
&= \cf_{i+1} \cf_i(y)\\
&= \cf_{i+1} (x) = x_1 &\text{by \eqref{eq:1_x}}
\end{align*}
Then, we have $\cf_{i} (y_1)=x_1$, or equivalently, $y_1 = \ce_i (x_1)$, as illustrated in the diagram below: 
\begin{center}
\includegraphics[scale=1]{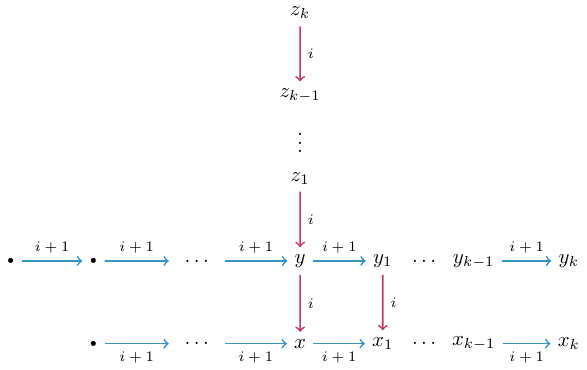}
\end{center}
Since $\cf_{i+1} (x) = x_1$, we have $x = \ce_{i+1} (x_1)$.
By axiom \textbf{S1}, we either have $\cepsilon_{i} (x) = \cepsilon_{i} (x_1)$ or $\cepsilon_{i} (x) = \cepsilon_{i} (x_1)+1$. We will show that both cases lead to contradictions.

Suppose that $\cepsilon_{i} (x) = \cepsilon_{i} (x_1)$. Then, since $\ce_i (x) = y$ and $\ce_i (x_1) = y_1$, \textbf{C1} implies that 
$$k = \cepsilon_{i} (y) = \cepsilon_i (x) - 1 = \cepsilon_i(x_1) -1 =  \cepsilon_{i} (y_1).$$
Therefore, there exists $w_1, \ldots, w_k$ such that
\begin{equation}\label{eq:1_w}
\begin{cases}
\ce_i (y_1) = w_1,\\
\ce_i (w_l) = w_{l+1}, \;\text{for}\; l=1, \ldots, k-1,\\
\cepsilon_i (w_k) = 0.
\end{cases}
\end{equation}
Thus, we have $\ce_{i+1} (y_1) = y$ and $\ce_i (y_1) = w_1$. Since $\cepsilon_i (y) = \cepsilon_i (y_1)$, axiom \textbf{S2} implies that $\ce_i \ce_{i+1} (y_1) = \ce_{i+1} \ce_i (y_1)$ and thus
\begin{align*}
\ce_{i+1} (w_1) &= \ce_{i+1} \ce_i (y_1)  &\text{by \eqref{eq:1_w}}\\
&= \ce_i \ce_{i+1} (y_1)\\
&= \ce_i (y)  &\text{by \eqref{eq:1_y}}\\
&= z_1  &\text{by \eqref{eq:1_z}}
\end{align*}
Applying this reasoning iteratively, we get $\ce_{i+1} (w_l) = z_l$, for $l = 1, \ldots, k$. In particular, we have $\ce_{i+1} (w_k) = z_k$ and thus, $\cf_{i+1} (z_k) = w_k$, as depicted in the following diagram:

\begin{center}
\includegraphics[scale=1]{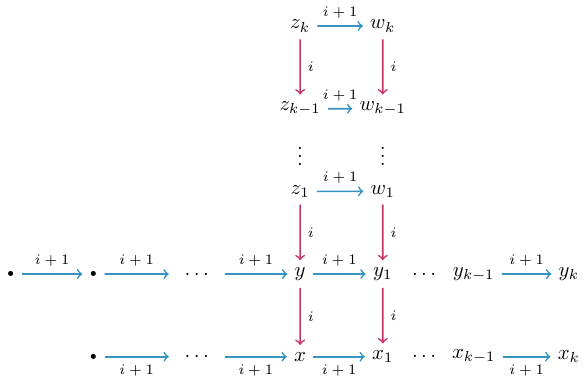}
\end{center} 
This implies that $\cphi_{i+1} (z_k) > 0$, which contradicts \eqref{eq:phi_z}. 

Now suppose that $\cepsilon_i (x) = \cepsilon_i (x_1) +1$.
Since we have $\ce_i (x_1) = y_1$ and $\ce_{i+1} (x_1) = x$, this implies that
\begin{align*}
\cepsilon_{i+1} (y_1) &= \cepsilon_{i+1} (y) + 1 &\text{by  \textbf{C1}}\\
&= (\cepsilon_{i+1} (x) + 1 ) +1 &\text{by \eqref{eq:1_eps_1}}\\
&= \cepsilon_{i+1} (x_1) +1 &\text{by  \eqref{eq:1_x}}
\end{align*} 
Therefore, axiom \textbf{S3} implies that 
$$
\ce_i \ce_{i+1}^{\,2} \ce_i (x_1) = \ce_{i+1} \ce_i^{\,2} \ce_{i+1} (x_1),
$$
and furthermore, 
$$y = \ce_i \ce_{i+1} (x_1) \neq \ce_{i+1} \ce_i (x_1) = y.$$
which is again a contradiction. 

Thus, the original assumption that $\qepsilon_i(y) > 0$ is false, and we have $\qepsilon_i (y) = 0$. \qedhere
\end{enumerate}
\end{proof}
 
\begin{prop}
$\Q$ satisfies axioms \emph{\textbf{LQ3}} and \emph{\textbf{LQ3$'$}}.
\end{prop}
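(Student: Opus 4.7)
The plan is to verify \textbf{LQ3} by lifting the hypotheses to $\C$, invoking Stembridge's axiom \textbf{S2}, and then pulling the resulting commutation back to $\Q$; axiom \textbf{LQ3}$'$ will then follow by a mirror argument with \textbf{S2}$'$. Suppose $\qe_i(x) = y$ and $\qe_j(x) = z$ are both defined, with $i \neq j$. By Definition \ref{def:crystal_quasi}, this translates to $\ce_i(x) = y$, $\ce_j(x) = z$, and the saturation conditions $\cepsilon_i(x) = \wt_{i+1}(x)$, $\cepsilon_j(x) = \wt_{j+1}(x)$. The goal is to show that (i) the hypotheses of \textbf{S2} are met, yielding $\ce_i \ce_j(x) = \ce_j \ce_i(x) \neq \bot$ in $\C$, and (ii) the intermediate Kashiwara operators $\ce_j(y)$ and $\ce_i(z)$ are in fact quasi-Kashiwara operators, so the commutation descends to $\Q$.

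For $|i-j|>1$ the case is clean: since $\langle \alpha_i, \alpha_j \rangle = 0$, Proposition \ref{prop:stem_prop} forces $\cepsilon_j(y) = \cepsilon_j(x) > 0$, activating \textbf{S2}, which additionally gives $\cphi_i(z) = \cphi_i(x)$. To identify $\ce_j(y)$ with $\qe_j(y)$, note that $\wt_{j+1}$ is preserved under $\ce_i$ (since $\alpha_i$ touches only positions $i$ and $i+1$, disjoint from $j+1$), so $\cepsilon_j(y) = \wt_{j+1}(y)$. For $\ce_i(z) = \qe_i(z)$, combine $\cphi_i(z) = \cphi_i(x) = \wt_i(x) = \wt_i(z)$ with Lemma \ref{lem:eps_phis_eq}.

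The adjacent case $j = i+1$ (the case $j = i-1$ is symmetric) is where the main obstacle lies. Here Proposition \ref{prop:stem_prop} offers two alternatives for $\cepsilon_{i+1}(y)$, only the first of which activates \textbf{S2}. The plan is to exclude the second by pinching: Lemma \ref{lem:ax_s1} gives $\cepsilon_{i+1}(y) \geq \cepsilon_{i+1}(x)$, and the saturation $\cepsilon_{i+1}(x) = \wt_{i+2}(x) = \wt_{i+2}(y)$ together with Lemma \ref{lem:leq_wt} yields $\cepsilon_{i+1}(y) \leq \wt_{i+2}(y) = \cepsilon_{i+1}(x)$, forcing equality. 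Thus \textbf{S2} applies, and the identifications $\qe_{i+1}(y) = \ce_{i+1}(y)$ and $\qe_i(z) = \ce_i(z)$ go through by Lemma \ref{lem:eps_phis_eq} exactly as in the orthogonal case. For \textbf{LQ3}$'$, the dual argument with \textbf{S2}$'$ and the lowering-operator analogue of Proposition \ref{prop:stem_prop} uses the same saturation trick to rule out the bad branch.
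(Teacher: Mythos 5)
Your proof is correct, and in the adjacent case it takes a genuinely different route from the paper. For $|i-j|>1$ both arguments are essentially the same (Proposition~\ref{prop:stem_prop} plus \textbf{S2}, then checking the composites stay quasi-operators). The real divergence is at $j=i\pm 1$: the paper gets $\cepsilon_{i+1}(y)=\cepsilon_{i+1}(x)$ by invoking the already-established Proposition~\ref{prop:q_LQ2} (that $\Q_\C$ satisfies \textbf{LQ2}, whose proof is the long one using \textbf{S2$'$} and \textbf{S3}), and then rules out $\qphi_i(z)=+\infty$ via Propositions~\ref{prop:local_ax} and~\ref{prop:q_LQ1}; your pinching argument
$\cepsilon_{i+1}(x)\le\cepsilon_{i+1}(\ce_i x)\le\wt_{i+2}(\ce_i x)=\wt_{i+2}(x)=\cepsilon_{i+1}(x)$
stays entirely at the level of $\C$, using only \textbf{S1} (via Lemma~\ref{lem:ax_s1}), Lemma~\ref{lem:leq_wt}, the saturation condition of Definition~\ref{def:crystal_quasi}, and then Lemma~\ref{lem:eps_phis_eq} together with the $\cphi$-preservation clause of \textbf{S2} to see that $\ce_i(z)$ and $\ce_{i+1}(y)$ are again quasi-Kashiwara operators. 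What your route buys is independence: \textbf{LQ3} is proved without the heavy \textbf{LQ2} machinery, so the logical order of the propositions could be rearranged; what the paper's route buys is brevity once \textbf{LQ2} is in hand. Two small points you leave implicit but which do go through: treating $j=i-1$ "by symmetry" is legitimate only because the hypothesis and conclusion are symmetric in $i,j$, so one always pinches the larger index's $\cepsilon$ along the smaller index's edge (the naive pinch of $\cepsilon_{i-1}$ along the $\ce_i$-edge fails, since $\wt_i$ increases there); and the dual argument for \textbf{LQ3$'$} works because, by Lemma~\ref{lem:eps_phis_eq}, $\qf_i(x)\neq\bot$ is equivalent to $\cf_i(x)\neq\bot$ together with $\cphi_i(x)=\wt_i(x)$, which is the exact dual saturation condition needed for the mirror pinch with \textbf{S2$'$}.
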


\begin{proof}
We will prove that $\Q$ satisfies axiom \textbf{LQ3}, the proof for \textbf{LQ3$'$} is similar. Let $x \in \Q$ and $i,j \in I$, such that $i \neq j$, and suppose that $\qe_i (x)$ and $\qe_j (x)$ are both defined. This implies that $\ce_i (x)$ and $\ce_j (x)$ are both defined as well and that
\begin{equation}\label{eq:q3_wts}
\cepsilon_i (x) = \wt_{i+1} (x), \quad \cepsilon_j(x) = \wt_{j+1} (x).
\end{equation}

\noindent
\textbf{Case 1.} Suppose that $|i-j|>1$. Then, axiom \textbf{S1} implies that $\cepsilon_i (\ce_j (x)) = \cepsilon_i (x)$, and therefore, by axiom \textbf{S2},
\begin{equation}\label{eq:q3_ij}
\ce_i \ce_j (x) = \ce_j \ce_i (x).
\end{equation}
Since $|i-j|>1$, we have $\wt_{i+1} (x) = \wt_{i+1} (\ce_j (x))$. Thus, \eqref{eq:q3_wts} implies that
$$\cepsilon_i (\ce_j (x)) = \cepsilon_i (x) = \wt_{i+1} (x) = \wt_{i+1} (\ce_j (x)),$$
and consequently, $\qepsilon_i (\ce_j (x)) = \qepsilon_i (\qe_j (x)) \neq + \infty$. Thus, $\qe_i \qe_j (x) = \ce_i \ce_j (x)$. Applying the same reasoning, we get $\qe_j \qe_i (x) = \ce_j \ce_i (x)$. Therefore, by \eqref{eq:q3_ij}, we have $\qe_i \qe_j (x) = \qe_j \qe_i (x)$.

\noindent
\textbf{Case 2.} Suppose that $|i-j|=1$, and without loss of generality, suppose that 
$$\qe_i (x) = \ce_i (x) = y, \quad \qe_{i+1} (x) = \ce_{i+1} (x) = z.$$
Since $\qe_{i+1} (x)$ is defined, we have $\qepsilon_{i+1} (x) \neq + \infty$. Therefore, as $\qe_i (x) =y$, Proposition \ref{prop:q_LQ2} implies that
\begin{equation}\label{eq:q3_i1_xy}
\qepsilon_{i+1} (x) = \qepsilon_{i+1} (y) \neq +\infty,
\end{equation}
and, consequently, $\cepsilon_{i+1} (x) = \cepsilon_{i+1} (y)$. Thus, it follows from axiom \textbf{S2} that
\begin{equation}\label{eq:q3_i1_com}
\ce_i \ce_{i+1} (x) = \ce_{i+1} \ce_i (x).
\end{equation}
From \eqref{eq:q3_i1_xy} and \eqref{eq:q3_i1_com}, we have $\qe_{i+1} \qe_i (x) = \ce_{i+1} \ce_i (x) = \ce_i \ce_{i+1} (x)$. Thus, it remains to show that $\qe_i \qe_{i+1} (x) = \ce_i \ce_{i+1} (x)$. We claim that $\qphi_i (z) \neq + \infty$. If $\qphi_i (z) = + \infty$ and $\qphi_{i+1} (x) > 0$, Propositions \ref{prop:local_ax} (3.2) and \ref{prop:q_LQ2} would imply that $\qphi_{i} (x) = + \infty$, which contradicts $\qe_i (x)$ being defined. If $\qphi_i (z) = + \infty$ and $\qphi_{i+1} (x) = 0$, then Proposition \ref{prop:q_LQ1} would imply that $\qepsilon_i (x) = 0$, which also contradicts $\qe_i (x)$ being defined. Therefore, we have $\qphi_i (z) \neq + \infty$, and thus $\qepsilon_i (z) \neq +\infty$. Therefore, $\qe_i \qe_{i+1} (x) = \ce_i \ce_{i+1} (x)$.
\end{proof}

\printbibliography
\end{document}